\newtheorem{theorem}{Theorem}[section]
\newtheorem{lemma}[theorem]{Lemma}
\newtheorem{proposition}[theorem]{Proposition}
\newtheorem{corollary}[theorem]{Corollary}
\newtheorem{definition}[theorem]{Definition}
\newtheorem{remark}[theorem]{Remark}
\newcommand{\V}{\mathcal{V}}
\newcommand{\eq}{\approx}
\title{Modal expansions of ririgs}
\author[1,2]{Agustín L. Nagy}
\author[1]{William J. Zuluaga Botero}
\affil[1]{Departamento de Matem\'atica, Facultad de Ciencias Exactas (UNCPBA), Pinto 399, Tandil, Argentina}
\affil[2]{CONICET, Argentina}
\date{}
\begin{document}

\maketitle
\begin{abstract}
In this paper we introduce the variety of I-modal ririgs. We characterize the congruence lattice of its members by means of I-filters and we provide a description on I-filter generation. We also provide an axiomatic presentation for the variety generated by chains of the subvariety of contractive I-modal ririgs. Finally, we introduce a Hilbert-style calculus of a logic with I-modal ririgs as an equivalent algebraic semantics and we prove that such a logic has the parametrized local deduction-detachment theorem. 
\end{abstract}

\section{Introduction}

Classical modal logic can be considered as the study of the deductive behavior of the expressions ``it is necessary that" and ``it is possible that". It is motivated by the limitations of Propositional Classical logic (CP) in deciding whether certain propositions are true or false. In 1932, Lewis and Langford \cite{LL1932} expand the language of CP with a unary connective $\Box$ and add the distribution axiom 
\[\Box (\varphi \rightarrow \psi) \rightarrow \Box \varphi \rightarrow \Box \psi\]
together with the rules modus ponens and $\Box$-necessitation (if $\varphi$ is a theorem so $\Box \varphi$). This system later received the name of $\mathbf{K}$ system in honor of Saul Kripke. Thereafter, stronger systems appeared. This is the case of the $\mathbf{S4}$ system, which has been extensively studied along the literature (see \cite{BRV2001} for a general survey) and it has being successfully applied to artificial intelligence \cite{MvH1995}. It is the axiomatic extension of the $\mathbf{K}$ system given by adding the axioms $\Box \varphi \rightarrow \varphi$ and $\Box \varphi \rightarrow \Box\Box\varphi$. In the late 1950s Prior introduces tense logic \cite{P1957}. It was a logic that wanted to deal with the philosophical implications of free will and predestination. Its language is the language of CP expanded by two unary connectives $G$ and $H$ which satisfy the distribution axiom. Its rules are modus ponens and $G$ and $H$-necessitation. 
This procedure of adding unary connectives satisfying the distribution axiom to some ``base logic", moved the interest in study non-classical modal logics. This is the case of intuitionistic modal logics $\mathbf{K}_{\Box}$ and the $\mathbf{S4K}_{\Box}$ (see \cite{WZ1999} and all the references therein), intuitionistic temporal logic \cite{E1986} and more recently, the logic $\mathbf{\L}(I)$, which is an expansion of the \L ukasiewicz logic $\mathbf{\L}$ by a set $I$ of unary connectives satisfying the distribution axiom and the rules $\Box$-necessitation for all $\Box\in I$ and modus ponens. 

It is well known that all the logics mentioned above, have an equivalent algebraic semantics. This algebraic correlate has motivated by itself the introduction and study of a huge amount of algebraic structures with a set of unary operators whose behavior can be interpreted as the algebraic behavior of certain modal operators. This is the case of modal algebras, tense algebras, S4-algebras, modal Heyting algebras and $\mathbf{MV}(I)$-algebras, to mention some. Such an approach has revealed deep advantages at the moment of studying diverse classical and non-classical modal logics.  Our motivation has to do with exploiting this approach. That is to say, in this paper we are intended to study, from an algebraic perspective, the consequences of the procedure of adding a set of modal operators to the algebraic class of residuated integral rigs. We have chosen this structures due to their relation with residuated lattices and therefore, with substructural logics. 
\\

The paper is organized as follows. In Section \ref{Preliminaries} we present the notions and results about residuated integral rigs and deductive systems which we will use throughout this paper. In Section \ref{I-modal ririgs} we introduce the structures we study along this work, namely I-modal ririgs. We characterize the congruences of the members of this class of algebras by means of I-filters and furthermore, we give a suitable description on I-filter generation. Afterwards, we dedicate some efforts to the I-modal ririgs with I finite. We show that such a condition over the set I, makes that the I-filter generation can be stated in terms of a single modal operator. In Section \ref{The variety generated by chains} we describe the variety generated by chains of the subvariety of contractive I-modal ririgs in terms of a quite simple set of equations. The content of Section \ref{Compatible functions} will be dedicated to the study of compatible operations and equationally defined compatible operations of I-modal ririgs. Finally, In Section \ref{logic R(I)} we present a Hilbert-style calculus for a modal logic \( \mathcal{S}_{\mathcal{H}} \) and we will prove that such a logic has as an equivalent variety semantics the variety of I-modal ririgs. The paper concludes by showing that  \( \mathcal{S}_{\mathcal{H}} \) has the local deduction-detachment theorem.

We assume the reader is familiar with the basics of universal algebra \cite{BS1981}, residuated lattices \cite{GJKO2007} and abstract algebraic logic.   

\section{Preliminaries}\label{Preliminaries}

\subsection{Integral residuated rigs}

A \emph{rig} is an algebra $\mathbf{A}=(A, +, \cdot,0, 1)$ of type $(2,2,0,0)$ such that the structures $(A, \cdot, 1)$ and $(A, +, 0)$ are commutative monoids such that ``product distributes over addition" in the sense that $x \cdot 0 = 0$ and $x \cdot (y + z) = (x \cdot y) + (x \cdot z)$ for every $x, y, z \in A$. One may think such structures as ``(commutative) rings (with unit) without negatives". A rig is said to be \emph{integral} if the equation $1+x=1$ holds, for every $x\in A$. Observe that the latter implies that $1+1=1$ so $+$ becomes idempotent. This makes of $(A,+,0,1)$ a bounded join semilattice. In what follows, and in order to avoid confusion, we write $\vee$ to denote $+$, as usual. We say that an integral rig is \emph{residuated} if for all $a, b, c \in A$:

\begin{displaymath}
\begin{array}{ccc}
a\cdot b \leq c & \Leftrightarrow & a\leq b\rightarrow c.
\end{array}
\end{displaymath}

We stress that the latter is equivalent to say that for every $b\in A$, the map $b\cdot (-):A\rightarrow A$ is a left adjoint of the map $b\rightarrow (-):A\rightarrow A$. Observe that from general reasons, this fact implies that the class of residuated integral rigs is a variety. By \emph{ririg} we mean integral residuated rig. Unless any clarification is needed, in the rest of this paper we write $xy$ instead of $x\cdot y$. Moreover, we write $\prod_{j=1}^{k}x_j$ for $x_1\cdot \ldots \cdot x_k$. 
\\

The following result, whose proof is analogue to Lema 7.1.6 of \cite{Ch2007} provides some useful properties for ririgs.

\begin{lemma}\label{props riRigs}
Let $\mathbf{A}$ be a ririg and let $a, b, c\in A$. Then, the following hold:
\begin{itemize}
\item[1)] $a\rightarrow 1 = 1$,
\item[2)] $1\rightarrow a = a$,
\item[3)] $a\rightarrow a = 1$,
\item[4)] $a(a\rightarrow b)\leq b$,
\item[5)] $a\rightarrow b\leq ac\rightarrow bc$,
\item[6)] $a\leq (a\rightarrow b)\rightarrow b$,
\item[7)] If $a\leq b$ then $c\rightarrow a\leq c\rightarrow b$ and $b\rightarrow c\leq a\rightarrow c$,
\item[8)] $a\rightarrow (b\rightarrow c)=ab\rightarrow c = b\rightarrow (a\rightarrow c)$,
\item[9)] $a\leq b\rightarrow c$ if and only if $b\leq a\rightarrow c$,
\item[10)] $a\leq b\rightarrow a$,
\item[11)] $a\rightarrow b\leq (c\rightarrow a)\rightarrow (c\rightarrow b)$,
\item[12)] $a\rightarrow b\leq (b\rightarrow c)\rightarrow (a\rightarrow c)$,
\item[13)] $a\rightarrow b = ((a\rightarrow b)\rightarrow b )\rightarrow b)$.
\item[14)] $a\leq b$ if and only if $a\rightarrow b=1$.
\end{itemize}
\end{lemma}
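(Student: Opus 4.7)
The plan is to treat the fourteen items uniformly using only the residuation adjunction $xy\leq z \iff x\leq y\to z$, integrality in the form $x\leq 1$ for all $x$ (which follows from $1+x=1$ since $+$ is the semilattice join), the commutative monoid structure of $(A,\cdot,1)$, and the monotonicity of $\cdot$ in each argument (which follows from the adjunction). The statement is analogous to the standard list of residuation identities in any integral commutative residuated poset, so I expect no genuine obstacle, only careful bookkeeping.

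First I would derive the ``base'' items 1)--4) directly from the adjunction. For 1), $a\cdot 1=a\leq 1$ gives $1\leq a\to 1$, and integrality gives the reverse. For 3), $a\cdot 1\leq a$ yields $1\leq a\to a$. For 2), the inequality $a\to 1\cdot a\leq a$ (rearranging via commutativity and the adjunction applied to $1\to a\leq 1\to a$) gives $1\to a\leq a$, while $1\cdot a\leq a$ gives $a\leq 1\to a$. Item 4), the evaluation inequality $a(a\to b)\leq b$, is just the adjunction applied to $a\to b\leq a\to b$ together with commutativity. Item 14) comes for free: if $a\leq b$ then $a\cdot 1\leq b$ yields $1\leq a\to b$, and conversely $a\to b=1$ together with 4) gives $a=a\cdot 1=a(a\to b)\leq b$.

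Next I would handle the ``symmetry'' and ``monotonicity'' items 6)--10). Item 6) is a restatement of 4) through the adjunction. For 9), both $a\leq b\to c$ and $b\leq a\to c$ unfold, via adjunction and commutativity of $\cdot$, to $ab\leq c$. Item 8) follows similarly: $a\to(b\to c)\leq ab\to c$ iff $a(b\to c)\leq b\to c$, which by adjunction is $ab(b\to c)\leq c$, true by 4) and monotonicity; the reverse and the symmetric equality $ab\to c = b\to(a\to c)$ are obtained the same way using commutativity. Item 7) is just the monotonicity obtained by composing the adjunction with $c\to a\leq c\to a$ (or $b\to c\leq b\to c$) and using 4). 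Item 10) asks for $a\leq b\to a$, i.e.\ $ab\leq a$, which holds since $b\leq 1$ and $\cdot$ is monotone. Item 5) is the instance $a\to b\leq ac\to bc$: by the adjunction it reduces to $c\cdot a(a\to b)\leq bc$, which follows from 4) and monotonicity.

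The remaining items 11)--13) are the composition-style identities. For 11), by the adjunction it suffices to show $(a\to b)(c\to a)c\leq b$, which follows by applying 4) twice (first to $(c\to a)c$, then to $a(a\to b)$, using commutativity). Item 12) is symmetric: $(a\to b)(b\to c)a\leq c$ by the same two uses of 4). For 13), write $x=a\to b$; by 6) we have $x\leq (x\to b)\to b$, and applying the contravariant part of 7) to this inequality on the left of $\to b$ gives $((x\to b)\to b)\to b\leq x\to b$ after noting $x\to b\leq ((x\to b)\to b)\to b$ is again 6); then 4) and one more adjunction step pin down the equality. The only item requiring a sliver of thought is 13), but it is exactly the ``triple negation'' identity familiar from residuated lattice theory and reduces to 6) plus antitonicity of $-\to b$, so I do not anticipate a real obstacle.
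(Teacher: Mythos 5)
Your proof is correct, and it follows essentially the approach the paper intends: the paper omits the argument entirely, deferring to Lemma 7.1.6 of \cite{Ch2007}, which establishes these identities by exactly the residuation-plus-integrality computations you carry out. One small bookkeeping point in item 13): after setting $x=a\rightarrow b$, your antitonicity step yields $((x\rightarrow b)\rightarrow b)\rightarrow b\leq x\rightarrow b$, which is the desired inequality with $x$ in the role of $a$, so you should run the argument with $x=a$ (apply 6) to $a$ and then the antitone part of 7)) rather than with $x=a\rightarrow b$, which as written proves the identity one iteration too high.
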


\subsection{Deductive systems}

By a \emph{propositional language} $\mathcal{L}$ we mean a set of propositional connectives, each of given
finite arity. In an algebraic context the elements of $\mathcal{L}$ are usually known as fundamental operations. If $\mathcal{L}$ is a propositional language we write \( \mathbf{Fm_{\mathcal{L}}} \) for the absolutely free algebra of formulas in the language \( \mathcal{L}\). If there is no risk of confusion we write \( \mathbf{Fm} \) instead \( \mathbf{Fm_{\mathcal{L}}} \). Besides, we write \( \mathrm{Fm} \) to indicate the universe of the algebra \( \mathbf{Fm} \). An equation (or identity) is a pair of formulas \( \left( \varphi, \psi \right) \). We use \( \varphi \approx \psi \) to write the equation \( \left( \varphi,\psi \right) \) and \( \mathrm{Eq} \) to denote the set of equations. Let \( \mathbf{A} \) be an algebra of type \( \mathcal{L} \) and \( v \colon \mathbf{Fm} \to \mathbf{A} \) a homomorphism. We say that the identity \( \varphi \approx \psi \) is valid on \( \mathbf{A} \), under the homomorphism \( v \), provided that \( v^{\mathbf{A}}\left( \varphi \right) = v^{\mathbf{A}}\left( \psi \right) \). We denote this by \( \left(A,v\right) \vDash \varphi \approx \psi \). We say that the equation \( \varphi \approx \psi \) is valid on \( \mathbf{A} \)  if for each homomorphism \( v \colon \mathbf{Fm} \to \mathbf{A} \) it holds \( \left(A,v \right) \vDash \varphi \approx \psi \). We write \( \mathbf{A} \vDash \varphi \approx \psi \) to say that the identity \( \left( \varphi,\psi \right) \) is valid on \( \mathbf{A} \). If \( \mathcal{K} \) is a class of algebras of type \( \mathcal{L}\) we say that the class \( \mathcal{K} \) satisfies the identity \( \left( \varphi,\psi \right) \) if for each member \( \mathbf{A} \in \mathcal{K} \) it holds \( \mathbf{A} \vDash \varphi \approx \psi \). We use \( \mathcal{K} \vDash \varphi \approx \psi \) to say that the identity \( \varphi \approx \psi \) is valid on \( \mathcal{K} \). If \( \Theta \) is a set of equations then we say that the set \( \Theta \) is valid on the class \( \mathcal{K} \) provided that \( \mathcal{K} \vDash \alpha \approx \beta \) for each \( \alpha \approx \beta \in \Theta \). We write \( \mathcal{K} \vDash \Theta \) to denote this fact.

A sequent of \( \mathbf{Fm} \) is a pair \( \left( \Gamma,\varphi \right) \) where \( \Gamma \) is a possibly empty set of formulas and \( \varphi \) is a formula. We write \( \Gamma \rhd \varphi \) to denote the sequent \( \left( \Gamma,\varphi \right) \) and we denote by \( \mathrm{Seq} \) the set of all sequents of \( \mathbf{Fm} \). A Hilbert rule \( \langle \Gamma,\varphi \rangle \) is the closure by endomorphisms on \( \mathbf{Fm} \) of the sequent \( \Gamma \rhd \varphi \). I.e.:
\[ \langle \Gamma,\varphi \rangle = \{ \sigma[\Gamma] \rhd \sigma[\varphi] \colon \sigma \colon \mathbf{Fm} \to \mathbf{Fm} \text{ is a endomorphism} \}. \]

A relation \( \vdash \,\ \subseteq \mathcal{P}\left( \mathbf{Fm} \right) \times \mathbf{Fm} \) is said to be a \emph{consequence relation} or entailment relation, provided that: 
\begin{itemize}
    \item[(1)] If \( \varphi \in \Gamma \), then \( \Gamma \vdash \varphi \).
    \item[(2)] If \( \Gamma \vdash \varphi \) and \( \Gamma \subseteq \Delta \). then \( \Delta \vdash \gamma \).
    \item[(3)] If \( \Gamma \vdash \varphi \) and \( \Delta \vdash \gamma \) for each \( \gamma \in \Gamma \). then \( \Delta \vdash \varphi \).
    \item[(4)] If \( \Gamma \vdash \varphi \) and \( \sigma \colon \mathbf{Fm} \to \mathbf{Fm} \) is an endomorphism, then \( \sigma[\Gamma] \vdash \sigma(\varphi) \).
\end{itemize}

We say that the pair \( S = \langle \mathcal{L},\vdash_{S} \rangle \) is a \emph{deductive system} (or a \emph{logic}) whenever \( \vdash_{S} \) is a consequence relation on the universe of the algebra \( \mathbf{Fm} \) of signature \( \mathcal{L} \). If there is no risk of confusion we write \( \langle \mathbf{Fm},\vdash_{S} \rangle \) instead \( \langle \mathcal{L},\vdash_{S} \rangle \) to denote the deductive system \( S \) and the signature will be clear from the context. We say that the sequent \( \Gamma \rhd \varphi \) is a sequent of \( S \) provided \( \Gamma \vdash_{S} \varphi \). Let \( \langle \Gamma,\varphi \rangle \) be a Hilbert rule. We say that \( \langle \Gamma,\varphi \rangle \) is a rule of the deductive system \( S \) whenever \( \Gamma \vdash_{S} \varphi \).

If \( \mathcal{K} \) is a class of algebras of a given type, then the \emph{equational consequence relation} associated to \( \mathcal{K} \) is defined as follows: We say that the equation \( \varphi \approx \psi \) follows from the set of equations \( \Theta \) on the class of algebras \( \mathcal{K} \) whenever \( \mathcal{K} \vDash \Theta \) implies \( \mathcal{K} \vDash \varphi \approx \psi \). I.e.:
\[ \Theta \vDash_{\mathcal{K}} \varphi \approx \psi \text{ if only if } \mathcal{K} \vDash \Theta \text{ implies } \mathcal{K} \vDash \varphi \approx \psi.
\]

A deductive system \( S = \langle \mathbf{Fm},\vdash_{S} \rangle \) is said to be  \emph{Block-Pigozzi algebraizable} \cite{BP} if there is a class of algebras \( \mathcal{K} \) on the signature of \( \mathbf{Fm} \) and structural transformers \( \tau \colon \mathrm{Fm} \to \mathcal{P}\left(\mathrm{Eq} \right) \), \( \rho \colon \mathrm{Eq} \to \mathcal{P}\left(\mathrm{Fm}\right) \) such that for each \( \Gamma, \varphi \subset \mathrm{Fm} \) and \( \Theta, \alpha \approx \beta \in \mathrm{Eq} \) the following hold:  

\begin{itemize}
\item[(1)] \( \Gamma \vdash_{S} \varphi \) if and only if \( \tau[\Gamma] \vDash_{\mathcal{K}} \tau(\varphi) \),
\item[(2)] \( \Theta \vDash_{\mathcal{K}} \alpha \approx \beta \) if and only if \( \rho \left( \Theta \right) \vdash_{S} \rho\left( \alpha \approx \beta \right) \),
\item[(3)] \( \varphi \dashv \vdash_{S} \rho(\tau(\varphi)) \),
\item[(4)] \( \alpha \approx \beta \Dashv \vDash_{\mathcal{K}} \tau \left( \rho \left( \alpha \approx \beta \right) \right) \) .
\end{itemize}

\begin{remark} \label{remark algebraizacion} Notice that a deductive system \( S \) is algebraizable with equivalent algebraic semantic \( \mathcal{K} \) and structural transformers \( \tau \colon \mathrm{Fm} \to \mathcal{P}\left( \mathrm{Eq} \right) \) and \( \rho \colon \mathrm{Eq} \to \mathcal{P}\left( \mathrm{Fm} \right) \) if and only if the following hold:
\begin{itemize}
\item[(1)] \( \Gamma \vdash_{S} \varphi \) if and only if \( \tau[\Gamma] \vDash_{\mathcal{K}} \tau(\varphi) \),
\item[(2)] \( \varphi \approx \psi \Dashv \vDash_{\mathcal{K}} \tau\left( \rho(\varphi \approx \psi) \right) \).
\end{itemize}
\end{remark}

\section{I-modal ririgs}\label{I-modal ririgs}

Let $\mathbf{A}$ be a ririg and let $m$ be a unary operator on $A$. We say that $m$ is a \emph{modal operator} on $\mathbf{A}$ if for every $x,y\in A$, the following identities hold:
\begin{displaymath}
\begin{array}{ccc}
m(x\rightarrow y)\leq m(x)\rightarrow m(y) & \text{and} & m(1)=1.
\end{array}
\end{displaymath}

\begin{remark}\label{important prop}
Let $\mathbf{A}$ be a ririg and let $m$ be a unary operator on $A$. Notice that one easily can check that from residuation it is the case that $m(x\rightarrow y)\leq m(x)\rightarrow m(y)$ holds if and only if $m(x)m(y)\leq m(xy)$ holds for every $x,y\in A$. Moreover, from Lemma \ref{props riRigs} (14), it is also clear that $m$ is a monotone operator on $A$.
\end{remark}

Now we present the structures we will study throughout this paper.

\begin{definition}\label{MTL(I)-algebra}
Let I be a set of unary function symbols. We say that an algebra $\mathbf{A}=(A,\vee, \cdot, \rightarrow, 0, 1, \{m\}_{m\in I})$ is an I-modal ririg provided that:
\begin{itemize}
    \item[1.] $(A, \vee, \cdot, \rightarrow, 0, 1)$ is a ririg.
    \item[2.] For every $m\in I$, $m$ is a modal operator on $A$.
\end{itemize}
\end{definition} 

It is immediate from Definition \ref{MTL(I)-algebra} that for every set of unary function symbols $I$, the class $\mathcal{R}(I)$ of I-modal ririgs is a variety.

\begin{definition}\label{MTL(I)-filter}
Let $\mathbf{A}$ be an I-modal ririg. We say that a non-empty subset $F$ of $A$ is an I-filter provided:
\begin{itemize}
\item[1.] $F$ is an up-set,
\item[2.] $F$ is closed under $\cdot$,
\item[3.] $F$ is closed under each $m \in I$.
\end{itemize}
\end{definition}

Let $\mathbf{A}$ be an I-modal ririg. In what follows, we will consider the term operation $\ast$, defined as $x\ast y = (x \rightarrow y)(y \rightarrow x)$. We write $\mathsf{Fi}(\mathbf{A})$ for the poset of I-filters of $\mathbf{A}$ ordered by inclusion and $\mathsf{Con}(\mathbf{A})$ for the congruence lattice of $\mathbf{A}$. Moreover, if $x,y\in A$, we denote by ${\mathsf{Cg}^{\mathbf{A}}(x,y)}$ the smallest congruence of $\mathbf{A}$ containing the pair $(x,y)$.
\\

The following result provides a characterization of the congruences of the members of $\mathcal{R}(I)$ by means of I-filters. Its proof is analogous to the one of Lemma 4 in \cite{FZ2021}, so we leave the details to the reader.

\begin{lemma}\label{Congruence I-Filters}
Let $\mathbf{A}$ be an I-modal ririg, $F\in \mathsf{Fi}(\mathbf{A})$, and $\theta\in \mathsf{Con}(\mathbf{A})$. Then the following hold:
\begin{enumerate}
\item $F_{\theta}=1/\theta$ is an I-filter of $\mathbf{A}$.
\item The set $\theta_{F}=\{(x,y)\in A^{2}\colon x\ast y\in F \} = \{(x,y)\in A^2\colon x\leftrightarrow y\in F\}$ is a congruence on $\mathbf{A}$.
\item The maps $F\mapsto \theta_{F}$, $\theta \mapsto F_{\theta}$ define a poset isomorphism between $\mathsf{Con}(\mathbf{A})$ and $\mathsf{Fi}(\mathbf{A})$. Consequently, $\mathsf{Fi}(\mathbf{A})$ is a lattice and these poset isomorphisms are lattice isomorphisms.
\end{enumerate}
\end{lemma}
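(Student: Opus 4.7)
My plan is to prove the three items in order, following the standard template for filter--congruence correspondences in residuated structures; the only genuinely new ingredient will be the handling of the modal operators in $I$, while everything else reduces to bookkeeping in the calculus of Lemma~\ref{props riRigs}, exactly as in the analogous Lemma~4 of \cite{FZ2021}.

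For item (1), I would verify the three I-filter axioms for $F_\theta = 1/\theta$. Non-emptiness is clear since $1\,\theta\,1$. The upward-closure condition follows from $a\,\theta\,1$ by joining both sides with any $b \geq a$ and invoking integrality ($1 \vee b = 1$). Closure under $\cdot$ is exactly the compatibility of $\theta$ with $\cdot$, and closure under each $m \in I$ combines the fact that every $m$ is a fundamental operation of $\mathbf{A}$ with the modal axiom $m(1) = 1$.

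For item (2), I need to check reflexivity, symmetry, transitivity, and compatibility with every operation. Reflexivity reduces to $x \ast x = 1 \in F$; symmetry is immediate from the commutativity of $\cdot$. For transitivity I plan to combine items~(11) and~(12) of Lemma~\ref{props riRigs} to obtain $(x \ast y)(y \ast z) \leq x \ast z$, whence closure under $\cdot$ and upward closure do the job. Compatibility with $\cdot$, $\vee$ and $\rightarrow$ is routine using the same items, together with the observation that integrality forces both $x \rightarrow y$ and $y \rightarrow x$ to lie in $F$ whenever $x \ast y \in F$. The step I expect to be the main obstacle is compatibility with each $m \in I$: given $x \ast y \in F$, the filter's closure under $m$ gives $m(x \rightarrow y), m(y \rightarrow x) \in F$, and the modal axiom $m(u \rightarrow v) \leq m(u) \rightarrow m(v)$ together with upward closure then places both $m(x) \rightarrow m(y)$ and $m(y) \rightarrow m(x)$ in $F$, so that their product is in $F$ as required.

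For item (3), I will check that the two maps are mutually inverse and order preserving. The identity $F_{\theta_F} = F$ collapses to the calculation $a \ast 1 = (a \rightarrow 1)(1 \rightarrow a) = 1 \cdot a = a$, using items~(1) and~(2) of Lemma~\ref{props riRigs}. For $\theta_{F_\theta} = \theta$ the forward inclusion is routine. For the converse, from $x \ast y\,\theta\,1$ I will first extract $x \rightarrow y\,\theta\,1$ (and symmetrically $y \rightarrow x\,\theta\,1$) by a squeeze via the join, using that $x \ast y \leq x \rightarrow y \leq 1$; then multiplying by $x$ (resp.\ $y$) and applying $a(a \rightarrow b) \leq b$ from Lemma~\ref{props riRigs} yields $x\,\theta\,x \vee y\,\theta\,y$, hence $x\,\theta\,y$. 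Both assignments are visibly order preserving for inclusion, so they induce a poset isomorphism and, automatically, a lattice isomorphism.
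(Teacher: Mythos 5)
Your proposal is correct and follows exactly the standard filter--congruence correspondence argument that the paper itself invokes (it gives no proof, deferring to the analogous Lemma~4 of the cited work of Fussner and Zuluaga Botero). All the key steps check out: the up-set argument via joins and integrality, transitivity and operation-compatibility via items (5), (11), (12) of Lemma~\ref{props riRigs}, the modal case via $m(x\rightarrow y)\leq m(x)\rightarrow m(y)$ and closure of $F$ under $m$, and the recovery of $x\,\theta\,y$ from $x\ast y\,\theta\,1$ by multiplying and joining.
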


Now, we will focus on the description of the filters generated by sets in $\mathcal{R}(I)$. To this end, we need to recall some notions first. If $I$ denotes a set of unary connective symbols, an \emph{$I$-block} is a word in the alphabet $I$.  We denote the set of $I$-blocks by $\mathcal{B}_{I}$ and the empty word in the alphabet $I$ by $\varepsilon$. Let $\mathbf{A} \in \mathcal{R}(I)$ and $m\in I$. We write $m^{\mathbf{A}}$ for the \emph{interpretation of $m$} in $\mathbf{A}$. We define the \emph{interpretation of $I$-blocks} in $\mathbf{A}$ recursively, as follows:
\begin{itemize}
\item[1.] If $m\in I$, then $m^{\mathbf{A}}$ is a function $m^{\mathbf{A}}:A\rightarrow A$. 
\item[2.] If $M=\varepsilon$, then $\varepsilon^{\mathbf{A}}=id_{A}$.
\item[3.] If $m\in I$, $N\in \mathcal{B}_{I}$ and $M$ is the word $mN$, then $M^{\mathbf{A}}$ is defined as the composition of $m^{\mathbf{A}}$ with $N^{\mathbf{A}}$. I.e. $M^{\mathbf{A}}=m^{\mathbf{A}}N^{\mathbf{A}}$.
\end{itemize}

Notice that for every $\mathbf{A}\in \mathcal{R}(I)$, $m^{\mathbf{A}}$ is a modal operator on $A$ and also that $id_{A}$, the identity on $A$, is a modal operator on $A$. This makes the proof of the following fact a straightforward consequence of the definition of interpretation of $I$-blocks on I-modal ririgs. 

\begin{proposition}
Let $\mathbf{A} \in \mathcal{R}(I)$. Then, $M^{\mathbf{A}}$ is a modal operator on $\mathbf{A}$ for every $M\in \mathcal{B}_{I}$.
\end{proposition}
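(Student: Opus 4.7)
The plan is to argue by induction on the length of the $I$-block $M$, reducing everything to the observation that the composition of two modal operators on a ririg is again a modal operator.

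For the base case $M = \varepsilon$, we have $\varepsilon^{\mathbf{A}} = \mathrm{id}_A$, which trivially satisfies $\mathrm{id}_A(1) = 1$ and $\mathrm{id}_A(x \rightarrow y) = x \rightarrow y \leq \mathrm{id}_A(x) \rightarrow \mathrm{id}_A(y)$. For the inductive step, suppose $M = mN$ with $m \in I$ and $N \in \mathcal{B}_I$ of strictly smaller length, so by the recursive definition $M^{\mathbf{A}} = m^{\mathbf{A}} \circ N^{\mathbf{A}}$. By the inductive hypothesis $N^{\mathbf{A}}$ is a modal operator on $\mathbf{A}$, and $m^{\mathbf{A}}$ is one by Definition \ref{MTL(I)-algebra}. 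The task therefore reduces to showing that if $n_1,n_2$ are modal operators on $\mathbf{A}$, then so is $n_1 \circ n_2$.

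For the preservation of $1$: $(n_1 \circ n_2)(1) = n_1(n_2(1)) = n_1(1) = 1$. For the main inequality, I would work with the equivalent formulation provided by Remark \ref{important prop}, namely $n(x)n(y) \leq n(xy)$, which behaves better under composition. Applying this first to $n_1$ on the arguments $n_2(x)$ and $n_2(y)$, and then using monotonicity of $n_1$ (also guaranteed by Remark \ref{important prop}) together with the analogous inequality for $n_2$, one obtains
\[
(n_1 \circ n_2)(x)\,(n_1 \circ n_2)(y) = n_1(n_2(x))\, n_1(n_2(y)) \leq n_1(n_2(x)n_2(y)) \leq n_1(n_2(xy)) = (n_1 \circ n_2)(xy),
\]
which by Remark \ref{important prop} is equivalent to $(n_1 \circ n_2)(x \rightarrow y) \leq (n_1 \circ n_2)(x) \rightarrow (n_1 \circ n_2)(y)$.

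There is no genuine obstacle: the whole argument is essentially bookkeeping once one commits to the multiplicative form of the modal axiom from Remark \ref{important prop}, which makes compositions transparent. The only point worth emphasizing is that the switch from the implicational form to the multiplicative one is what allows monotonicity of the outer operator to be applied cleanly; attempting the induction directly with $n(x \rightarrow y) \leq n(x) \rightarrow n(y)$ is possible but less tidy, since one would need to invoke residuation at each step.
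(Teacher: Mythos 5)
Your proof is correct and follows exactly the route the paper intends: the paper omits the details, remarking only that the claim is a straightforward induction from the facts that $id_A$ and each $m^{\mathbf{A}}$ are modal operators, and your argument fills this in by showing closure of modal operators under composition via the multiplicative form of Remark \ref{important prop}. No issues.
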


If there is no place to confusion, in the following we will omit the superscripts on the interpretation of $I$-blocks on I-modal ririgs. 
\\

We recall that if $(P, \leq)$ is a partially ordered set and $X \subseteq P$, then the smallest up-set containing $X$ is the set 
\[\uparrow X = \{y \in P \mid x \leq y\; \text{for some}\; x \in X\}.\]
The following result characterizes congruence generation in $\mathcal{R}(I)$ by means of the least up-set containing finite products of finitely many I-blocks evaluated in finitely many elements of a given set. Its proof is analogue to the one of Lemma 5 in \cite{FZ2021}, so we skip the details. 

\begin{lemma}\label{generated filters R(I)}
Let $\mathbf{A}$ be an I-modal ririg and let $X \subseteq A$. Then, the set 
\[\mathsf{Fg}^{\mathbf{A}}(X)=\uparrow \{M_1(x_1)\cdot\ldots\cdot M_n(x_n)\colon x_1,\ldots,x_n \in X\;\text{and}\; M_1,...,M_n\in\mathcal{B}_{I}\}. \]
is the least I-filter of $\mathbf{A}$ containing $X$.
\end{lemma}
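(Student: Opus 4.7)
The plan is to prove the claim in three standard steps: show the displayed set $U := \uparrow\{M_1(x_1)\cdots M_n(x_n) : x_i \in X, M_i \in \mathcal{B}_I\}$ is an $I$-filter, show it contains $X$, and show it is contained in every $I$-filter containing $X$. Containment of $X$ is immediate by taking $n=1$ and $M_1 = \varepsilon$, since $\varepsilon^{\mathbf{A}} = \mathrm{id}_A$.

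To verify $U$ is an $I$-filter, up-closure is built into the definition. For closure under $\cdot$, given $u,v \in U$ with $u \geq \prod_{i=1}^n M_i(x_i)$ and $v \geq \prod_{j=1}^k N_j(y_j)$, compatibility of the order with multiplication in a ririg yields $uv \geq \prod_i M_i(x_i) \cdot \prod_j N_j(y_j)$, which is a generator of the desired form. For closure under an arbitrary $m\in I$, suppose $u \in U$ with $u \geq \prod_{i=1}^n M_i(x_i)$. By Remark 3.2 (monotonicity of $m$) we get $m(u) \geq m\bigl(\prod_i M_i(x_i)\bigr)$, and by the multiplicative inequality $m(a)m(b) \leq m(ab)$ (also from Remark 3.2), iterated $n-1$ times, we obtain
\[
m(u) \;\geq\; m\Bigl(\prod_{i=1}^n M_i(x_i)\Bigr) \;\geq\; \prod_{i=1}^n m(M_i(x_i)) \;=\; \prod_{i=1}^n (mM_i)(x_i).
\]
Since each $mM_i$ is again an $I$-block, $m(u) \in U$.

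For minimality, let $F$ be any $I$-filter with $X \subseteq F$. I claim that for every $M \in \mathcal{B}_I$ and every $x \in X$, $M(x) \in F$. This is a quick induction on the length of $M$: the base case $M=\varepsilon$ is immediate, and if $M = mN$ with $N$ of strictly shorter length, then $N(x)\in F$ by the inductive hypothesis, and $F$ being closed under $m$ gives $M(x) = m(N(x)) \in F$. Closure of $F$ under $\cdot$ then yields $\prod_{i=1}^n M_i(x_i) \in F$ for every generator, and up-closure of $F$ gives $U \subseteq F$.

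The only delicate point is the closure of $U$ under each modal operator, which is where one must invoke the inequality $m(a)m(b)\leq m(ab)$ together with monotonicity; everything else is bookkeeping with up-sets and the free monoid structure of $\mathcal{B}_I$. I therefore expect no real obstacle, which is consistent with the authors' reference to the analogous Lemma 5 of \cite{FZ2021}.
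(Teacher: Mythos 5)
Your proof is correct and takes exactly the route the paper intends: the paper itself omits the argument, deferring to the analogue of Lemma 5 of \cite{FZ2021}, and the proof it does spell out for the finite-$I$ case (Lemma \ref{generated filter finite case}) has the same three-part structure you use, with your key step --- combining monotonicity of $m$ with $m(a)m(b)\leq m(ab)$ to absorb $m$ into the blocks --- playing the role that Lemma \ref{propiedades de lambda}(3) plays there. The only caveat, shared with the statement as printed, is the degenerate case $X=\emptyset$, where the displayed set is empty (hence not an $I$-filter) unless one admits the empty product $1$ as a generator.
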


If $X=\{x_{1},...,x_{n}\}$, we will write $\mathsf{Fg}^{\mathbf{A}}(X)$ simply as $\mathsf{Fg}^{\mathbf{A}}(x_{1},...,x_{n})$. Next, we present a characterization of the $I$-filters determined by the smallest congruences of $\mathbf{A}$ containing the pair $(x,y)$, for a given a pair of elements $x,y$ in $A$ and the set $X=\{(1,y)\colon a\in Y\}$, respectively. This result will be particularly useful in Section \ref{Compatible functions}.

\begin{lemma}\label{cor:Generated filter intersection}
Let $\mathbf{A}\in \mathcal{R}(I)$, let $x,y\in A$, let $Y\subseteq A$, and consider $X=\{(1,y)\colon a\in Y\}$. Then:
\begin{enumerate}
\item $F_{\mathsf{Cg}^{\mathbf{A}}(x,y)}=\mathsf{Fg}^{\mathbf{A}}(x\ast y)=\mathsf{Fg}^{\mathbf{A}}(x\leftrightarrow y)$.
\item $F_{\mathsf{Cg}^{\mathbf{A}}(X)}=\mathsf{Fg}^{\mathbf{A}}(Y)$.
\end{enumerate}
\end{lemma}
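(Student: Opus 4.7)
The plan is to exploit the isomorphism from Lemma~\ref{Congruence I-Filters} together with the explicit description $\theta_{F}=\{(u,v)\colon u\ast v\in F\}$. Both parts will be proved by a double containment argument, in each case reducing the inclusion to a one-line verification that a distinguished element lies inside a given $I$-filter.

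For part (1), I would first show $F_{\mathsf{Cg}^{\mathbf{A}}(x,y)}\subseteq \mathsf{Fg}^{\mathbf{A}}(x\ast y)$. Setting $\theta=\theta_{\mathsf{Fg}^{\mathbf{A}}(x\ast y)}$, trivially $x\ast y\in \mathsf{Fg}^{\mathbf{A}}(x\ast y)$, so by Lemma~\ref{Congruence I-Filters}(2) we get $(x,y)\in\theta$; hence $\mathsf{Cg}^{\mathbf{A}}(x,y)\subseteq\theta$, and taking $1$-classes yields the desired inclusion. For the reverse inclusion, since $F_{\mathsf{Cg}^{\mathbf{A}}(x,y)}$ is an $I$-filter (Lemma~\ref{Congruence I-Filters}(1)), it suffices to check that $x\ast y$ belongs to it. This follows from $x\mathrel{\mathsf{Cg}^{\mathbf{A}}(x,y)}y$, the compatibility of $\rightarrow$ and $\cdot$ with congruences, and Lemma~\ref{props riRigs}(3), which together give
\[
x\ast y=(x\rightarrow y)(y\rightarrow x)\mathrel{\mathsf{Cg}^{\mathbf{A}}(x,y)}(y\rightarrow y)(x\rightarrow x)=1.
\]
The equality $\mathsf{Fg}^{\mathbf{A}}(x\ast y)=\mathsf{Fg}^{\mathbf{A}}(x\leftrightarrow y)$ is then a consequence of the second description of $\theta_{F}$ in Lemma~\ref{Congruence I-Filters}(2), which tells us both terms determine the same filter-relation.

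For part (2), the argument is entirely analogous. The key computation is the identity $1\ast a = (1\rightarrow a)(a\rightarrow 1)=a\cdot 1=a$, using Lemma~\ref{props riRigs}(1) and (2). For the inclusion $F_{\mathsf{Cg}^{\mathbf{A}}(X)}\subseteq \mathsf{Fg}^{\mathbf{A}}(Y)$, I would verify that each pair $(1,a)\in X$ lies in $\theta_{\mathsf{Fg}^{\mathbf{A}}(Y)}$, since $1\ast a=a\in Y\subseteq \mathsf{Fg}^{\mathbf{A}}(Y)$; hence $\mathsf{Cg}^{\mathbf{A}}(X)\subseteq \theta_{\mathsf{Fg}^{\mathbf{A}}(Y)}$, and passing to $1$-classes gives the inclusion. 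Conversely, since $F_{\mathsf{Cg}^{\mathbf{A}}(X)}$ is an $I$-filter, to see that it contains $\mathsf{Fg}^{\mathbf{A}}(Y)$ it is enough (by Lemma~\ref{generated filters R(I)}) to check that $Y$ lies inside it, which is immediate: $(1,a)\in\mathsf{Cg}^{\mathbf{A}}(X)$ means $a\in 1/\mathsf{Cg}^{\mathbf{A}}(X)=F_{\mathsf{Cg}^{\mathbf{A}}(X)}$.

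There is no real obstacle here, because Lemma~\ref{Congruence I-Filters} already does the heavy lifting; the only care required is the simple term-level computation that $x\ast y\equiv 1$ modulo $\mathsf{Cg}^{\mathbf{A}}(x,y)$ and that $1\ast a = a$. I should also remark that the statement implicitly assumes $X=\{(1,a)\colon a\in Y\}$ (the notation $(1,y)$ with index $a$ in the excerpt is a minor typo), since otherwise the equality in (2) would fail when $y\neq 1$.
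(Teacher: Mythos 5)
Your proof is correct and follows essentially the same route as the paper's: both rest on the isomorphism of Lemma~\ref{Congruence I-Filters} and the equivalence $(u,v)\in\theta \Leftrightarrow u\ast v\in F_{\theta}$, the paper merely packaging your two containments as preservation of meets (for part~1) and joins (for part~2) under that isomorphism. Your remark that $X$ should read $\{(1,a)\colon a\in Y\}$ correctly identifies a typo in the statement.
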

\begin{proof}
1. Note that $\mathsf{Cg}^{\mathbf{A}}(x,y)=\bigcap \{\theta\in \mathsf{Con}(\mathbf{A})\colon (x,y)\in\theta\}$, and observe that for each $\theta\in\mathsf{Con}(\mathbf{A})$ we have $(x,y)\in\theta$ if and only if $x\ast y \in F_{\theta}$. Hence from the isomorphism given by Lemma \ref{Congruence I-Filters}(3) we obtain:
$$F_{\mathsf{Cg}^{\mathbf{A}}(x,y)}=\bigcap \{F\in \mathsf{Fi}(\mathbf{A})\colon x\ast y\in F\}=\mathsf{Fg}^{\mathbf{A}}(x\ast y)=\mathsf{Fg}^{\mathbf{A}}(x\leftrightarrow y).$$
This proves 1.

2. Since $\mathsf{Cg}^{\mathbf{A}}(X)=\bigvee_{y\in Y}\mathsf{Cg}^{\mathbf{A}}(1,y)$, Lemma \ref{Congruence I-Filters}(3) and item 1 imply
\[F_{\mathsf{Cg}^{\mathbf{A}}(X)}=\bigvee_{y\in Y} F_{\mathsf{Cg}^{\mathbf{A}}(1,y)}=\bigvee_{y\in Y} \mathsf{Fg}^{\mathbf{A}}(y)=\mathsf{Fg}^{\mathbf{A}}(\bigcup_{y\in Y} \{y\})=\mathsf{Fg}^{\mathbf{A}}(Y).\]
This proves 2.
\end{proof}

\subsection{The finite case}\label{The finite case}

In this section we study the variety of finite I-modal ririgs. We will show that the results of Section \ref{I-modal ririgs} can be expressed by means of a single unary operation which is constructed from the modal operators belonging to I. To this end, let $I=\{m_{1},...,m_{k}\}$ be a finite set of unary function symbols. We write $\mathcal{R}(I_{\omega})$ for the variety $\mathcal{R}(I)$ when $I$ is finite. We define the unary operation $\lambda$ as follows:
\[\lambda(x)=x\cdot \prod^{k}_{j=1}m_{j}(x). \]
Moreover, we consider $\lambda^{0}(x)=x$ and $\lambda^{l+1}(x)=\lambda(\lambda^{l}(x))$, for every $l\in \mathbb{N}$.
\begin{lemma}\label{propiedades de lambda}
Let $\mathbf{A}$ be an I-modal ririg. Then for every $x,y\in A$ the following hold:
\begin{enumerate}
\item $\lambda(x)\leq x$.
\item $\lambda(0)=0$ and $\lambda(1)=1$.
\item The operator $\lambda$ is a modal operator.
\item $\lambda^{l+1}(x)\leq \lambda^{l}(x)$, for every $l\in \mathbb{N}$.
\end{enumerate}
\begin{proof}
Notice that $(1)$ follows from the definition of $\lambda$. Observe that (2) follows from (1) and the fact that $m_{j}(1)=1$ for every $1\leq j\leq k$. For (3), it is clear from (2) that $\lambda(1)=1$, thus only remains to prove $\lambda(x\rightarrow y)\leq \lambda(x)\rightarrow \lambda(y)$. To do so, we will prove that $\lambda(x)\lambda(y)\leq \lambda(xy)$. Since $m_{j}(x\rightarrow y)\leq m_{j}(x)\rightarrow m_{j}(y)$ for every $1\leq j\leq k$, then by Remark \ref{important prop} we have $m_{j}(x)m_{j}(y)\leq m_{j}(xy)$. Thus, due $\cdot$ is commutative and order preserving we obtain 
\[\prod_{j=1}^{k}m_{j}(x)\cdot \prod_{j=1}^{k}m_{j}(y)\leq \prod_{j=1}^{k}m_{j}(xy), \]
so, multiplying by $xy$ at both sides of the latter inequality and by applying the same argument we employed before, we may conclude $\lambda(x)\lambda(y)\leq \lambda(xy)$, as claimed. Hence, by Remark \ref{important prop}, $\lambda(x\rightarrow y)\leq \lambda(x)\rightarrow \lambda(y)$, as desired. For (4) we apply induction on $l$. It is clear by (1) that the statement is true for $l=0$, so assume as inductive hypothesis $\lambda^{l+1}(x)\leq \lambda^{l}(x)$. By (3), $\lambda$ is monotone, so we obtain $\lambda^{l+2}(x)=\lambda(\lambda^{l+1}(x))\leq \lambda(\lambda^{l}(x))=\lambda^{l+1}(x)$, as required. This concludes the proof.
\end{proof}
\end{lemma}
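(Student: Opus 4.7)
The plan is to handle the four items in the order they are stated, leaning heavily on Remark~\ref{important prop} (which lets me trade $m(x\to y)\le m(x)\to m(y)$ for $m(x)m(y)\le m(xy)$, and also packages the monotonicity of every modal operator) and on the integrality of the underlying ririg (so that $1$ is the top element and $0$ is absorbing for $\cdot$).

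For item (1), I would just observe that $m_j(x)\le 1$ holds in any integral rig, so $\prod_{j=1}^{k} m_j(x)\le 1$, and since multiplication is order preserving on residuated structures, $\lambda(x)=x\cdot\prod_{j=1}^{k}m_j(x)\le x\cdot 1=x$. Item (2) is essentially bookkeeping: $\lambda(1)=1\cdot\prod_j m_j(1)=1$ because each $m_j(1)=1$, while $\lambda(0)=0\cdot\prod_j m_j(0)=0$ because $0$ absorbs $\cdot$ in any rig.

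The substantive content sits in item (3). Since $\lambda(1)=1$ is already in hand, the task reduces, via Remark~\ref{important prop}, to proving $\lambda(x)\lambda(y)\le\lambda(xy)$. My strategy is to first apply Remark~\ref{important prop} to each $m_j$ to get $m_j(x)m_j(y)\le m_j(xy)$, then take the product of these $k$ inequalities using commutativity, associativity, and the fact that $\cdot$ is order preserving in both arguments, yielding $\prod_{j=1}^{k}m_j(x)\cdot\prod_{j=1}^{k}m_j(y)\le\prod_{j=1}^{k}m_j(xy)$. Finally I multiply both sides by $xy$ and rearrange $xy\cdot\prod m_j(x)\cdot\prod m_j(y)$ into $\lambda(x)\lambda(y)$, while the right-hand side is exactly $\lambda(xy)$. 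The one place that needs a touch of care is the middle rearrangement step, where I have to permute factors on the left so that $x$ sits next to $\prod m_j(x)$ and $y$ sits next to $\prod m_j(y)$; this is the main (though very mild) obstacle, and it is harmless because $\cdot$ is a commutative monoid operation. Item (4) is then immediate by induction on $l$: the base case $l=0$ is item (1), and for the inductive step I apply (3) together with the fact (Remark~\ref{important prop}) that modal operators are monotone, so $\lambda^{l+2}(x)=\lambda(\lambda^{l+1}(x))\le\lambda(\lambda^{l}(x))=\lambda^{l+1}(x)$.
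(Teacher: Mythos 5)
Your proposal is correct and follows essentially the same route as the paper: items (1) and (2) by direct computation from the definition, item (3) by reducing to $\lambda(x)\lambda(y)\le\lambda(xy)$ via Remark \ref{important prop} and multiplying the componentwise inequalities $m_j(x)m_j(y)\le m_j(xy)$ together with $xy$, and item (4) by induction using the monotonicity of $\lambda$. The only cosmetic difference is that you justify $\lambda(0)=0$ by the absorbing law $x\cdot 0=0$ rather than by combining item (1) with the fact that $0$ is the bottom element, which is equally valid.
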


\begin{proposition}\label{filters finite case}
Let $\mathbf{A}$ be an I-modal ririg and let $F\subseteq A$. Then $F$ is an I-filter if and only if $F$ is a filter of ririgs closed by $\lambda$.
\end{proposition}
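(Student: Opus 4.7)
The plan is to prove both implications directly from the definitions, leveraging the integrality of the underlying ririg (every element is $\leq 1$) together with monotonicity of $\cdot$, which together imply that a product of elements is bounded above by any single factor. This is the only non-trivial ingredient; everything else is immediate from the definition of $\lambda$ and Definitions \ref{MTL(I)-algebra}–\ref{MTL(I)-filter}.

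For the forward direction, I would simply observe that if $F$ is an $I$-filter, then by conditions $1$ and $2$ of Definition \ref{MTL(I)-filter}, $F$ is already a filter of ririgs (the usual notion requires only that $F$ is a non-empty up-set closed under $\cdot$). To see that $F$ is closed under $\lambda$, take $x \in F$: condition $3$ of Definition \ref{MTL(I)-filter} gives $m_j(x) \in F$ for every $j \in \{1,\dots,k\}$, and then closure under $\cdot$ yields $\lambda(x) = x \cdot \prod_{j=1}^{k} m_j(x) \in F$.

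For the converse, the key observation I would isolate is that for every $x \in A$ and every $i \in \{1,\dots,k\}$,
\[
\lambda(x) \leq m_{i}(x).
\]
Indeed, rewriting $\lambda(x) = m_i(x) \cdot \bigl( x \cdot \prod_{j \neq i} m_j(x) \bigr)$, the second factor is a product of elements each $\leq 1$ (by integrality), hence itself $\leq 1$ by monotonicity of $\cdot$ (which follows from the fact that $b \cdot (-)$ is a left adjoint, therefore join-preserving, therefore order-preserving). Multiplying on the left by $m_i(x)$ and using monotonicity again yields $\lambda(x) \leq m_i(x) \cdot 1 = m_i(x)$. With this in hand, assume $F$ is a ririg filter closed under $\lambda$ and let $x \in F$. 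Then $\lambda(x) \in F$, and since $F$ is an up-set and $\lambda(x) \leq m_i(x)$, we conclude $m_i(x) \in F$ for every $i$. This verifies condition $3$ of Definition \ref{MTL(I)-filter}, and the remaining two conditions hold by the hypothesis that $F$ is a ririg filter.

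There is no real obstacle: the argument is essentially bookkeeping around the adjunction-derived monotonicity of $\cdot$ and the chain of inequalities $\lambda(x) \leq m_i(x)$. The only point deserving care is that these monotonicity facts be invoked explicitly, since the paper's Lemma \ref{props riRigs} records properties of $\rightarrow$ but not of $\cdot$, although they are standard consequences of residuation and integrality.
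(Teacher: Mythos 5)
Your proof is correct and follows essentially the same route as the paper's: the forward direction uses closure under each $m_j$ and under $\cdot$, and the converse rests on the inequality $\lambda(x)\leq m_j(x)$ (which the paper attributes to the definition of $\lambda$ and integrality, exactly as you spell out) together with $F$ being an up-set. Your extra care in justifying the monotonicity of $\cdot$ from residuation is a welcome but inessential elaboration of the same argument.
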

\begin{proof}
On the one hand, let $F$ be an I-filter of $\mathbf{A}$. Then $m_{j}(x)\in $, for every $x\in F$ and $1\leq j\leq k$. Since $F$ is closed by products, $\lambda(x)\in F$. On the other hand, let $F$ be a filter of ririgs closed by $\lambda$. Then, from definition of $\lambda$, we obtain $\lambda(x)\leq m_{j}(x)$, for every $x\in F$. Thus, since $F$ is increasing, the result follows.
\end{proof}

\begin{lemma}\label{generated filter finite case}
Let $\mathbf{A}$ be an I-modal ririg and $X\subseteq A$. Then 
\[\mathsf{Fg}^{\mathbf{A}}(X)=\uparrow \{\prod_{i=1}^{n}\lambda^{l}(x_{i}) \colon x_{1},...,x_{n}\in X\; \text{ and }\; l\in \mathbb{N}\}. \]
\end{lemma}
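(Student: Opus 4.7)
The plan is to show both inclusions of the displayed equality, letting $F_{1}$ denote the right-hand side $\uparrow\{\prod_{i=1}^{n}\lambda^{l}(x_{i})\colon x_{1},\ldots,x_{n}\in X,\ l\in\mathbb{N}\}$. The strategy is: first, prove that $F_{1}$ is an $I$-filter containing $X$, which by minimality gives $\mathsf{Fg}^{\mathbf{A}}(X)\subseteq F_{1}$; second, show the reverse inclusion directly from the definition of $\lambda$ and the fact that $\mathsf{Fg}^{\mathbf{A}}(X)$ is itself an $I$-filter.

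For the first inclusion, I would verify the three defining conditions of Definition \ref{MTL(I)-filter} for $F_{1}$. Upward closure is built into the definition. Containment of $X$ follows by taking $n=1$ and $l=0$, since $\lambda^{0}(x)=x$. Closure under $\cdot$ is handled by the following normalization trick: given two generating products with exponents $l$ and $l'$, Lemma \ref{propiedades de lambda}(4) lets me replace both by the common exponent $\max(l,l')$ at the cost of moving down in the order, so the product of the two elements dominates an expression $\prod_{k=1}^{n+m}\lambda^{\max(l,l')}(z_{k})$, which lies in $F_{1}$. The crucial and slightly more delicate point is closure under each $m_{j}\in I$: here I rely on two facts. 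First, by Remark \ref{important prop}, $m_{j}$ is monotone and satisfies $m_{j}(u)m_{j}(v)\leq m_{j}(uv)$, so $m_{j}\bigl(\prod_{i=1}^{n}\lambda^{l}(x_{i})\bigr)\geq \prod_{i=1}^{n}m_{j}(\lambda^{l}(x_{i}))$. Second, from the very definition $\lambda^{l+1}(x)=\lambda^{l}(x)\cdot\prod_{j=1}^{k}m_{j}(\lambda^{l}(x))$ combined with integrality (which forces $uv\leq u$ in the rig), one gets $\lambda^{l+1}(x)\leq m_{j}(\lambda^{l}(x))$ for every $j$. Chaining these two estimates, and using monotonicity of $m_{j}$ applied to an arbitrary $a\geq \prod_{i=1}^{n}\lambda^{l}(x_{i})$ in $F_{1}$, yields $m_{j}(a)\geq \prod_{i=1}^{n}\lambda^{l+1}(x_{i})$, so $m_{j}(a)\in F_{1}$.

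For the reverse inclusion, I use that $\mathsf{Fg}^{\mathbf{A}}(X)$ is an $I$-filter containing $X$. Since it is closed under each $m_{j}$ and under $\cdot$, the formula $\lambda(x)=x\cdot\prod_{j=1}^{k}m_{j}(x)$ shows $\lambda(x)\in\mathsf{Fg}^{\mathbf{A}}(X)$ whenever $x\in X$, and by an easy induction on $l$, $\lambda^{l}(x)\in\mathsf{Fg}^{\mathbf{A}}(X)$ for every $l\in\mathbb{N}$. Closure under $\cdot$ then puts each finite product $\prod_{i=1}^{n}\lambda^{l}(x_{i})$ inside $\mathsf{Fg}^{\mathbf{A}}(X)$, and upward closure of $\mathsf{Fg}^{\mathbf{A}}(X)$ delivers $F_{1}\subseteq\mathsf{Fg}^{\mathbf{A}}(X)$.

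The only step that is not mechanical is the closure of $F_{1}$ under the modal operators $m_{j}$. The right way to think about it is that although $m_{j}$ need not preserve the specific shape $\prod\lambda^{l}(x_{i})$, applying $m_{j}$ increases the exponent by one and this is still an admissible generator; in other words, the failure of $m_{j}$ to act as an endomorphism of the generating set is exactly absorbed by the recursive definition of $\lambda$. Once this is recognized, the rest of the argument reduces to Lemma \ref{propiedades de lambda} and Lemma \ref{generated filters R(I)}.
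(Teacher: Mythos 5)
Your proof is correct and follows essentially the same route as the paper: show the right-hand side is an $I$-filter containing $X$ using Lemma \ref{propiedades de lambda}, then obtain the reverse inclusion from minimality. The only (immaterial) difference is that you verify closure under each $m_{j}\in I$ directly via the inequality $\lambda^{l+1}(x)\leq m_{j}(\lambda^{l}(x))$, whereas the paper first reduces, via Proposition \ref{filters finite case}, to checking closure under the single operator $\lambda$.
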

\begin{proof}
Let $Z=\uparrow \{\prod_{i=1}^{n}\lambda^{l}(x_{i}) \colon x_{1},...,x_{n}\in X\; \text{ and }\; l\in \mathbb{N}\}$. We start by proving that $Z$ is an I-filter. To do so, by Proposition \ref{filters finite case} we need to prove that $Z$ is a filter of ririgs closed under $\lambda$. Let $y\in Z$, then there exist $l\in \mathbb{N}$ and $x_{1},...,x_{n}\in X$ such that $\prod_{i=1}^{n}\lambda^{l}(x_{i})\leq y$. Then, from Proposition \ref{propiedades de lambda} (3),  \[\prod_{i=1}^{n}\lambda^{l+1}(x_{i})\leq \lambda(\prod_{i=1}^{m}\lambda^{l}(x_{i}))\leq \lambda(y),\]
so $\lambda(y)\in Z$. Now we prove that $Z$ is closed under $\cdot$. Let $y_{1},y_{2}\in Z$. Then there exist $l,q\in \mathbb{N}$ and $a_{1},...,a_{n},b_{1},...,b_{p}\in X$ such that $\prod_{i=1}^{n}\lambda^{l}(a_{i})\leq y_{1}$ and $\prod_{i=1}^{p}\lambda^{q}(b_{i})\leq y_{2}$. Therefore from Proposition \ref{propiedades de lambda} (4) we get $\lambda^{l+q}(a_{i})\leq \lambda^{l}(a_{i})$, for every $1\leq i\leq n$ and $\lambda^{l+q}(b_{j})\leq \lambda^{l}(b_{j})$, for every $1\leq j\leq p$. So
\[\prod_{i=1}^{n}\lambda^{l+q}(a_{i})\cdot\prod_{i=1}^{p}\lambda^{l+q}(b_{i})\leq  y_{1}\cdot y_{2},\]
thus $y_{1}\cdot y_{2}\in \mathsf{Fg}^{\mathbf{A}}(X)$. Finally, $Z$ is non-empty since $1\in Z$ and it is clear that $Z$ is an up-set. Hence, $Z$ is a filter of ririgs closed by $\lambda$, as claimed.

For the last part, notice that $X\subseteq Z$ by Proposition \ref{propiedades de lambda} (1). Now, let $F$ be an I-filter such that $X\subseteq F$ and take $y\in Z$. Then there exist $l\in \mathbb{N}$ and $x_{1},...,x_{n}\in X$ such that $\prod_{i=1}^{n}\lambda^{l}(x_{i})\leq y$. Since $x_{i}\in F$ for every $1\leq i\leq n$, then by Proposition \ref{filters finite case} $\lambda(x_{i})\in F$ and therefore $\lambda^{l}(x_{i})\in F$. Since $F$ is closed by $\cdot$, $\prod_{i=1}^{n}\lambda^{l}(x_{i})\in F$ and because $F$ is increasing, $y\in F$ as claimed.
\end{proof}

We conclude this section by characterizing the simple and subdirectly irreducible members of \( \mathcal{R}(I) \). The following results provide a generalization of the results given in \cite{C,H}. 

\begin{theorem}
Let \( \mathbf{A} \in \mathcal{R}(I) \) non trivial. Then \( \mathbf{A} \) is simple if and only if for each \( a \neq 1 \) there is  \( M \in \mathcal{B}_{I} \) such that \( M(a) = 0 \). Moreover, if $I$ is finite, then  $\mathbf{A}$ is simple if and only if, for each \( a \neq 1 \) there exists $l\in \mathbb{N}$ such that $\lambda^{l}(a)=0$.
\end{theorem}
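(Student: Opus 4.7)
The plan is to convert the question about simplicity into a question about principal I-filters via the lattice isomorphism \(\mathsf{Con}(\mathbf{A}) \cong \mathsf{Fi}(\mathbf{A})\) from Lemma \ref{Congruence I-Filters}, and then to unravel the latter using the descriptions of generated filters already established. Since \(\mathbf{A}\) is non-trivial, \(\{1\} \subsetneq A\), so \(\mathbf{A}\) is simple if and only if \(\mathsf{Fi}(\mathbf{A}) = \{\{1\}, A\}\), if and only if for every \(a \in A\) with \(a \neq 1\) one has \(\mathsf{Fg}^{\mathbf{A}}(a) = A\); equivalently, \(0 \in \mathsf{Fg}^{\mathbf{A}}(a)\) for every such \(a\).

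For the first equivalence, I would apply Lemma \ref{generated filters R(I)}: \(0\) lies in \(\mathsf{Fg}^{\mathbf{A}}(a)\) if and only if there exist blocks \(M_1, \ldots, M_n \in \mathcal{B}_I\) with \(\prod_{i=1}^{n} M_i(a) = 0\). The implication ``there is \(M \in \mathcal{B}_I\) with \(M(a) = 0\) \(\Rightarrow\) \(\mathbf{A}\) is simple'' is then immediate by taking \(n=1\). For the converse one has to collapse a product of several block-evaluations into a single block-evaluation; the idea is to exploit that each \(M_i\) is a modal operator (so \(M_i(x)M_i(y) \leq M_i(xy)\) by Remark \ref{important prop}) and that \(\mathcal{B}_I\) is closed under concatenation, in analogy with the strategy of \cite{FZ2021}.

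For the ``moreover'' part, the parallel argument uses Lemma \ref{generated filter finite case}: \(0 \in \mathsf{Fg}^{\mathbf{A}}(a)\) if and only if \(\lambda^{l}(a)^{n} = 0\) for some \(l, n \in \mathbb{N}\). A single \(\lambda^{l'}(a) = 0\) clearly implies this. For the converse I would use that \(\lambda\) is a monotone modal operator with \(\lambda(0) = 0\) (Lemma \ref{propiedades de lambda}); since \(\lambda(x)\lambda(y) \leq \lambda(xy)\) yields \(\lambda(x)^{n} \leq \lambda(x^{n})\) by induction on \(n\), one obtains \(\lambda^{l+1}(a)^{n} \leq \lambda(\lambda^{l}(a)^{n}) = \lambda(0) = 0\); iterating, while exploiting the decreasing chain \(\lambda^{l+k+1}(a) \leq \lambda^{l+k}(a)\) from Lemma \ref{propiedades de lambda}(4), should furnish an index \(l'\) with \(\lambda^{l'}(a) = 0\).

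The crux, in both the general and the finite case, is precisely this collapse from a product (respectively, a power) to a single evaluation; everything else is bookkeeping with the I-filter/congruence correspondence. That collapse uses nontrivially the interaction between the ririg product and the modal operators, and I expect it to be the one step where the definitions must be exploited beyond the surface level.
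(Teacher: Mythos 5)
Your reduction of simplicity to ``\(0 \in \mathsf{Fg}^{\mathbf{A}}(a)\) for every \(a \neq 1\)'' via the congruence/filter isomorphism, and the easy direction (take \(n=1\) in Lemma~\ref{generated filters R(I)}), coincide with what the paper does. The problem is the collapse step, which you correctly isolate as the crux but leave as a plan — and the plan fails. What Lemma~\ref{generated filters R(I)} gives you is blocks \(M_1,\ldots,M_n\) with \(M_1(a)\cdots M_n(a)=0\); to finish you would need a single block \(Q\) with \(Q(a)\leq M_1(a)\cdots M_n(a)\). The inequality \(M(x)M(y)\leq M(xy)\) of Remark~\ref{important prop} bounds a product of \(M\)-values from \emph{above} by a single \(M\)-value, i.e.\ it points in the wrong direction for this purpose, and concatenating blocks does not help either. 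In fact no collapse is possible in general: interpret \(I=\{m\}\) by \(m=\mathrm{id}_A\) on the three-element {\L}ukasiewicz chain \(\{0,\tfrac12,1\}\). Every \(I\)-filter is then just a ririg filter, and the only ones are \(\{1\}\) and \(A\) (any filter containing \(\tfrac12\) contains \((\tfrac12)^2=0\)), so the algebra is simple; yet \(M(\tfrac12)=\tfrac12\neq 0\) for every block \(M\).

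The ``moreover'' part has the same defect. Taking instead \(m\equiv 1\) (also a modal operator) on the same chain gives \(\lambda=\mathrm{id}\), a simple algebra in which \(\lambda^{l}(\tfrac12)\neq 0\) for all \(l\); and your computation \(\lambda^{l+1}(a)^{n}\leq\lambda(\lambda^{l}(a)^{n})=\lambda(0)=0\) only reproduces \(\lambda^{l+1}(a)^{n}=0\) without ever lowering the exponent \(n\), so the iteration never terminates in a single \(\lambda^{l'}(a)=0\). To be fair to you, the paper's own proof performs exactly the same unjustified jump (``thus there is \(M\) with \(M(a)=0\)''), so your instinct that this is the one step needing real work is accurate; what the filter-generation lemmas actually support is the characterization by finite products, \(M_1(a)\cdots M_n(a)=0\) (respectively \(\lambda^{l}(a)^{n}=0\)), which reduces to a single block only under additional hypotheses such as idempotence of \(\cdot\), as in the Heyting-algebra and distributive-lattice settings of the sources the theorem generalizes.
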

\begin{proof}
Suppose that \( \mathbf{A} \) is simple but no trivial algebra and let \( a \in \mathbf{A} \). Then, by Lemma \ref{Congruence I-Filters}, we obtain \( \mathsf{Fg}^{\mathbf{A}}\left( \{a\} \right)= A \). Thus by Lemma \ref{generated filters R(I)}, there is \( M \in \mathcal{B}_{I} \) with \( M(a) = 0 \). On the other hand, suppose that for every \( a \in \mathbf{A} \) with \( a \neq 1 \) there is \( M \in \mathcal{B}_{I} \) such that \( M(a) = 0 \). We shall prove that \( \mathsf{Fi}(\mathbf{A}) = \{ \{1\}, \mathbf{A} \} \). Let \( F \in \mathsf{Fi}(\mathbf{A}) \) and \( a \in F \). By assumption we have that \( 0 \in \mathsf{Fg}\left( \{a\} \right) \subseteq F \) so \( 0 \in F \) and \( F = \mathbf{A} \).  The moreover part follows from Lemma \ref{generated filter finite case}.
\end{proof}

\begin{theorem} 
Let \( \mathbf{A} \in \mathcal{R}(I) \) non trivial algebra. Then \( \mathbf{A} \) is subdirectly irreducible if only if there is \( b \in \mathbf{A} \), \( b \neq 1 \) such that for each \( a \in \mathbf{A} \), \( a \neq 1 \), there is \( M \in \mathcal{B}_{I} \) with \( M(a) \leq b \). Moreover, if $I$ is finite, then  $\mathbf{A}$ is subdirectly irreducible if only if there exists $l\in \mathbb{N}$ such that $\lambda^{l}(a)\leq b$.
\end{theorem}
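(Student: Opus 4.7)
The plan is to adapt the argument of the preceding theorem, using the congruence--I-filter correspondence of Lemma \ref{Congruence I-Filters}(3) to translate subdirect irreducibility into the existence of a smallest non-trivial I-filter $F_{0}$, and using Lemma \ref{cor:Generated filter intersection}(1) to observe that this monolith is principal, say $F_{0}=\mathsf{Fg}^{\mathbf{A}}(b)$ for some $b\neq 1$.

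For necessity, assuming $\mathbf{A}$ is subdirectly irreducible I would fix $b\in F_{0}\setminus\{1\}$. For each $a\neq 1$ the filter $\mathsf{Fg}^{\mathbf{A}}(a)$ is non-trivial and thus contains $F_{0}$, whence $b\in \mathsf{Fg}^{\mathbf{A}}(a)$; by Lemma \ref{generated filters R(I)} this produces a product $M_{1}(a)\cdots M_{n}(a)\leq b$ with $M_{i}\in\mathcal{B}_{I}$, from which a single block $M\in \mathcal{B}_{I}$ with $M(a)\leq b$ is obtained following the same argument used in the proof of the preceding theorem on simple algebras. For sufficiency I would set $F_{0}:=\mathsf{Fg}^{\mathbf{A}}(b)\neq\{1\}$ and check it is minimum among non-trivial I-filters: given a non-trivial $F\in\mathsf{Fi}(\mathbf{A})$, choose $a\in F\setminus\{1\}$; the hypothesis yields $M\in\mathcal{B}_{I}$ with $M(a)\leq b$; since $F$ is closed under every operator in $I$ (hence under the interpretation of every block), $M(a)\in F$, and because $F$ is an up-set we conclude $b\in F$, whence $F_{0}\subseteq F$.

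For the moreover part I would rerun the same scheme with Lemma \ref{generated filter finite case} in place of Lemma \ref{generated filters R(I)}, so that the containment $b\in \mathsf{Fg}^{\mathbf{A}}(a)$ becomes an inequality of the form $\lambda^{l}(a)^{n}\leq b$ for some $l,n\in \mathbb{N}$. A preliminary induction on $l$ yields $\lambda^{l}(a)\leq M(a)$ for every block $M$ of length at most $l$ (using $\lambda^{l+1}(a)=\lambda^{l}(a)\cdot\prod_{j}m_{j}(\lambda^{l}(a))\leq m_{j}(\lambda^{l}(a))$ and the monotonicity of each $m_{j}$), and this is what allows the passage from the product/power form to a single iterate $\lambda^{l'}(a)\leq b$. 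Sufficiency in the finite case then uses Proposition \ref{filters finite case}, namely that I-filters are exactly the ririg filters closed under $\lambda$.

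The main obstacle, in both the general and the finite case, is the uniformisation step that passes from a product (respectively a power) of values below $b$ to a single inequality of the form $M(a)\leq b$ (respectively $\lambda^{l'}(a)\leq b$); since $\cdot$ need not be idempotent in a general ririg this reduction is not automatic, and handling it cleanly requires invoking the same trick already used in the proof of the preceding theorem for simple algebras.
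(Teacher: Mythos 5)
Your route is the paper's route: via Lemma \ref{Congruence I-Filters} subdirect irreducibility amounts to $\mathsf{Fi}(\mathbf{A})$ having a least non-trivial element, for necessity one picks a non-unit $b$ in that monolith and uses that every $\mathsf{Fg}^{\mathbf{A}}(a)$ with $a\neq 1$ contains it, and for sufficiency one checks that $\mathsf{Fg}^{\mathbf{A}}(b)$ sits inside every non-trivial I-filter. Your sufficiency argument is complete and is spelled out more carefully than the paper's one-line version, and the same goes for your treatment of the finite case via Lemma \ref{generated filter finite case}.

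The gap is exactly the ``uniformisation step'' you flag, and you will not find the trick you are deferring to: the paper's proof of the simplicity theorem does not contain it either --- it passes from $b\in\mathsf{Fg}^{\mathbf{A}}(a)$, which by Lemma \ref{generated filters R(I)} only yields $M_{1}(a)\cdots M_{n}(a)\leq b$, straight to a single block $M(a)\leq b$, and the proof of the present theorem does the same. This reduction is not merely non-automatic; it fails in $\mathcal{R}(I)$. Take $I=\{m\}$ with $m$ interpreted as the identity map (this satisfies $m(1)=1$ and $m(x\to y)\leq m(x)\to m(y)$, so it is a modal operator, even a contractive one). Then every $I$-block acts as the identity, so the condition ``$M(a)\leq b$ for some $M\in\mathcal{B}_{I}$'' collapses to ``$a\leq b$'', while $\mathsf{Fg}^{\mathbf{A}}(a)=\uparrow\{a^{n}\colon n\geq 1\}$ still contains genuine powers. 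On the MV-chain $[0,1]$ with $x\cdot y=\max(0,x+y-1)$ every $a<1$ has $a^{n}=0$ for large $n$, so the only I-filters are $\{1\}$ and $[0,1]$ and the algebra is simple, hence subdirectly irreducible; yet no $b\neq 1$ lies above every $a\neq 1$. So the left-to-right implication fails for every choice of $b$, and no refinement of the argument can recover it: what your argument (and the paper's) actually proves is the version with conclusion $M_{1}(a)\cdots M_{n}(a)\leq b$, respectively $\lambda^{l}(a)^{n}\leq b$ (for the finite clause, the same chain with $m$ the constant map $1$, so that $\lambda(x)=x$, is a counterexample). Your right-to-left direction is unaffected, and with the conclusion weakened to a finite product of blocks your proof goes through verbatim.
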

\begin{proof}
Suppose that \( \mathbf{A} \) is a subdirectly irreducible algebra then the lattice \( \mathsf{Fi}\left( \mathbf{A} \right) \) has a monolith \( F \neq \{1\} \). Hence there is \( b \in F \) with \( b \neq 1 \). Note that for each \( a \in \mathbf{A} \) we have that \( b \in \mathsf{Fg}(\mathbf{A}) \). Thus there is \( M \in \mathcal{B}_{I} \) with \( M(a) \leq b \).

Conversely suppose that there is \( b \in \mathbf{A} \) such that for each \( a \in \mathbf{A} \) with \( a \neq 1 \) there is \( M \in \mathcal{B}_{I} \) with \( M(a) \leq b \). Note that \( \mathsf{F_{g}}\left( \{b\} \right) \) is a monolith in \( \mathsf{Fi}(\mathbf{A}) \). By Lemma \ref{Congruence I-Filters} we have that \( \mathbf{A} \) is subdirectly irreducible. The moreover part follows from Lemma \ref{generated filter finite case}.
\end{proof}

\section{The variety generated by chains} \label{The variety generated by chains}

In this section we characterize the variety generated by chains of the subvariety of \( \mathcal{R}(I) \) whose members are \emph{contractive} I-modal ririgs. I.e. we say that $\mathbf{A}\in \mathcal{R}(I)$ is contractive, if for every $x\in A$ and $m\in I$, the equation $m(x)\leq x$ holds. We denote such a subvariety by $\mathcal{R}_{c}(I)$. Along this section we write \( \mathcal{C} \) for the subclass of \( \mathcal{R}_{c}(I) \) whose members are chains and  \( \mathcal{V}\left( \mathcal{C} \right) \) for the variety generated by it. We stress that the results of this section are an specialization of the results of  \cite{HRT}.

We start by considering the following identities in the signature of I-modal ririgs, where $m$ ranges over I:
\begin{itemize}
\item[(P)] $(a \to b) \vee (b \to a) = 1,$
\item[(Cm)] $m(a \vee b) \leq  m(a) \vee m(b)$.
\end{itemize}

We write $\mathcal{R_{C}}(I)$ for the subvariety of $\mathcal{R}_{c}(I)$ whose members satisfy the identities (P) and (Cm), for every $m\in I$. Notice that \( \mathcal{C} \) is a proper subclass of \( \mathcal{R_{C}}(I) \),  so \( \mathcal{V}(\mathcal{C}) \) is a proper subvariety of \( \mathcal{R_{C}}(I). \) Along this section we are intended to prove that $\mathcal{R_{C}}(I)$ and $\mathcal{V}(\mathcal{C})$ coincide. To do so, we need to first prove some properties of the members of $\mathcal{R_{C}}(I)$.

\begin{lemma}\label{lemma M0}
Let $\mathbf{A} \in \mathcal{R_{C}}(I)$ and $a,b \in A$. Then, for every \( m, n \in I \), 
\[ mn(a \vee b) \leq  a \vee n(b). \] 
\end{lemma}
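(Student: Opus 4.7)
The plan is to chain four inequalities, using (Cm) twice (once inside and once outside the outer application of $m$), monotonicity of modal operators (Remark \ref{important prop}), contractivity applied twice on the $a$-summand, and contractivity applied once on the $b$-summand. No real obstacle is expected; the only point requiring care is asymmetry between the two summands, since we want to reach $a \vee n(b)$ rather than $a \vee b$ or $n(a) \vee n(b)$.

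More concretely, I would first apply (Cm) with operator $n$ to obtain $n(a \vee b) \leq n(a) \vee n(b)$. Since $m$ is a modal operator it is monotone, so applying $m$ yields $mn(a \vee b) \leq m(n(a) \vee n(b))$. A second application of (Cm), this time for $m$, gives $m(n(a) \vee n(b)) \leq mn(a) \vee mn(b)$.

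At this point I invoke contractivity, which is the defining identity of $\mathcal{R}_{c}(I)$. On the first summand I use it twice: $mn(a) \leq n(a) \leq a$. On the second summand I use it only once: $mn(b) \leq n(b)$. Because $\vee$ is monotone in both arguments, these combine to $mn(a) \vee mn(b) \leq a \vee n(b)$, completing the chain
\[ mn(a \vee b) \leq m(n(a) \vee n(b)) \leq mn(a) \vee mn(b) \leq n(a) \vee n(b) \leq a \vee n(b), \]
which is the desired inequality. Note that prelinearity (P) is not needed for this lemma; only (Cm) and contractivity (together with the general fact that modal operators are monotone) are used.
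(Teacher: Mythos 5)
Your argument is correct and is essentially the one the paper intends: the paper's one-line proof cites only contractivity, but---as your derivation makes explicit---one also needs (Cn) to split $n(a \vee b)$ into $n(a) \vee n(b)$ before contracting $n(a) \leq a$, since contractivity alone yields only the weaker bound $mn(a \vee b) \leq a \vee b$, which does not imply $mn(a\vee b)\leq a \vee n(b)$. The only inessential difference is your extra pass through $m(n(a) \vee n(b))$ via monotonicity and a second use of (Cm): contractivity of $m$ already gives $mn(a \vee b) \leq n(a \vee b)$ in one step, so the outer application of (Cm) can be dropped.
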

\begin{proof}
Immediate from the contractivity of $m$ and $n$.
\end{proof}

\begin{lemma}\label{lemma M1}
Let $\mathbf{A} \in \mathcal{R_{C}}(I)$ and $a,b \in A$. Then, for every \( m, n \in I \), there exists $Q\in \mathcal{B}_{I}$ such that:
\[ Q(a \vee b) \leq  m(a) \vee n(b). \] 
\end{lemma}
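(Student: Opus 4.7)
The plan is to produce the block $Q$ explicitly: take $Q=mn$, which belongs to $\mathcal{B}_{I}$ since $m,n\in I$. The intuition is that a single composition of the two operators already provides enough room to ``route'' each of the two summands to the target: one copy will be absorbed by contractivity of the outer operator, the other by contractivity of the inner one, and in between we use axiom (Cm) twice to push the operator across the join.

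Concretely, I would argue in the following order. First apply axiom (Cm) for $n$ to obtain $n(a\vee b)\leq n(a)\vee n(b)$. Then apply $m$ to both sides; since $m$ is a modal operator it is monotone (Remark \ref{important prop}), giving
\[
mn(a\vee b) \;=\; m(n(a\vee b)) \;\leq\; m(n(a)\vee n(b)).
\]
Now apply axiom (Cm) for $m$ to the right-hand side to get
\[
m(n(a)\vee n(b)) \;\leq\; m(n(a))\vee m(n(b)) \;=\; mn(a)\vee mn(b).
\]
Finally I would handle the two summands asymmetrically, which is the key observation: for the first summand, contractivity of $n$ yields $n(a)\leq a$, hence by monotonicity of $m$ we get $mn(a)\leq m(a)$; for the second summand, contractivity of $m$ applied directly to the element $n(b)$ gives $m(n(b))\leq n(b)$, i.e.\ $mn(b)\leq n(b)$. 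Combining these two inequalities,
\[
mn(a)\vee mn(b) \;\leq\; m(a)\vee n(b),
\]
and chaining everything together yields $Q(a\vee b)=mn(a\vee b)\leq m(a)\vee n(b)$, as required.

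There is no real obstacle here; the only mildly non-obvious point is recognising that the two occurrences of contractivity must be used differently (once composed with monotonicity of $m$ on the $a$-side, and once as a direct application of the $m$-contractivity axiom to $n(b)$ on the $b$-side), so that each summand ends up with exactly the operator prescribed by the statement.
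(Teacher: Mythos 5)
Your proof is correct and uses exactly the same ingredients as the paper's argument (the axiom (Cm) for both operators, monotonicity, and contractivity), so it is essentially the same approach; the only difference is that you exhibit the shorter witness $Q=mn$ directly, whereas the paper routes through Lemma \ref{lemma M0} and takes $Q=mmn$. Since the lemma only asserts the existence of some $Q\in\mathcal{B}_{I}$, this discrepancy is immaterial.
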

\begin{proof}
Let $m,n \in I$ and $a,b\in I$. Then from Lemma \ref{lemma M0}, $ mn(a \vee b) \leq  a \vee n(b)$. Now take $Q=mmn$. Then, from (Cm), (Cn) and the contractivity of $m$ and $n$ we get 
\[Q(a\vee b)\leq m(a)\vee mn(b)\leq m(a)\vee n(b),\] 
as required.
\end{proof}

\begin{lemma}
Let \( \mathbf{A} \in \mathcal{R_{C}}(I) \) and $a,b \in A$. Then, for every $M\in \mathcal{B}_{I}$ and $n \in I$, there exists $N\in \mathcal{B}_{I}$ such that 
\[Q(x\vee y)\leq M(x) \vee n(y). \]
\end{lemma}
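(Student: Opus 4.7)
The plan is to proceed by induction on the length of the $I$-block $M$, treating the fixed modal operator $n \in I$ as a parameter. In the base case $M = \varepsilon$, so that $M(x) = x$, and the desired inequality reduces to finding $N \in \mathcal{B}_{I}$ with $N(x \vee y) \leq x \vee n(y)$. Taking $N = n$, the axiom (Cn) gives $n(x \vee y) \leq n(x) \vee n(y)$, and contractivity of $n$ yields $n(x) \leq x$, so $N(x \vee y) \leq x \vee n(y)$.

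For the inductive step, decompose $M$ from the left as $M = mM'$ with $m \in I$ and $M' \in \mathcal{B}_{I}$ of strictly smaller length. By the inductive hypothesis applied to $M'$ (with the same $n$), there exists $N' \in \mathcal{B}_{I}$ such that $N'(x \vee y) \leq M'(x) \vee n(y)$. Set $N := mN'$. Combining the monotonicity of $m$ (which holds for every modal operator, by Remark \ref{important prop}), the axiom (Cm), and the contractivity of $m$, we obtain
\[ N(x \vee y) = mN'(x \vee y) \leq m\bigl(M'(x) \vee n(y)\bigr) \leq mM'(x) \vee mn(y) \leq M(x) \vee n(y), \]
where the last inequality uses $mn(y) \leq n(y)$, which follows from the contractivity of $m$. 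This completes the induction.

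The main conceptual point — and the only potential obstacle — is observing that the leading $m$ in the decomposition $M = mM'$ can first be distributed over the join by (Cm) and then discarded from the $n(y)$ summand by contractivity, so that the required block $N$ is produced simply by prepending $m$ to the block $N'$ supplied by the inductive hypothesis. The two preceding lemmas already display this pattern in the length-one case; the present statement is essentially their inductive iteration along the word $M$.
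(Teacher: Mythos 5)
Your proof is correct and follows essentially the same route as the paper's: induction on the block $M$, peeling off the leading letter $m$, and combining monotonicity of $m$, the axiom (Cm), and contractivity to pass from the block supplied by the inductive hypothesis to $mN'$. The only (harmless) difference is that you resolve the base case $M=\varepsilon$ directly with $N=n$ instead of citing the preceding lemmas, and you correctly attribute the final inequality $mn(y)\leq n(y)$ to the contractivity of $m$.
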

\begin{proof}
Let $M$ be an $I$-block and let $n \in I$ fixed. In order to prove our claim, we proceed by induction on $\mathcal{B}_{I}$. If $M=m\in I$, then from Lemma \ref{lemma M1}, the result follows. If $M=\varepsilon$, then by Lemma \ref{lemma M0} the claim also holds. Now, let us assume that $M=mN$, for some $m\in I$ and $N\in \mathcal{B}_{I}$. Let us take by inductive hypothesis that there exists $R\in \mathcal{B}_{I}$ such that $R(a\vee b)\leq N(a) \vee n(b)$. Thus from the monotonicity of $m$, (Cm) and the contractivity of $n$ we get
\[mR(a\vee b)\leq m(N(a) \vee n(b))\leq mN(a)\vee mn(b)\leq M(a)\vee n(b),\]
as desired. This concludes the proof.
\end{proof}

Now, we extend the previous result for arbitrary $I$-blocks. Its proof is similar to the one of previous Lemma, so we omit the details.

\begin{lemma}\label{lemma I-blocks}
Let \( \mathbf{A} \in \mathcal{R_{C}}(I) \) and $a,b \in A$. Then, for every $M,N\in \mathcal{B}_{I}$ there exists $Q\in \mathcal{B}_{I}$ such that 
\[Q(x\vee y)\leq M(x) \vee N(y). \]
\end{lemma}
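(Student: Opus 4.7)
My plan is to prove the lemma by induction on the length of $N \in \mathcal{B}_I$, keeping $M$ fixed and arbitrary throughout. A handy preliminary observation is that the condition (Cm) lifts from single modal operators to arbitrary blocks: for every $P \in \mathcal{B}_I$ and $x,y\in A$, one has $P(x \vee y) \leq P(x) \vee P(y)$. This follows by a quick induction on the length of $P$, peeling off the outermost symbol and applying (Cm) combined with the monotonicity of each $m \in I$. I would either record this first or invoke it in passing.

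For the base case $N = \varepsilon$, I take $Q := M$; then the preliminary observation together with the contractivity of the modal operators yields $M(x \vee y) \leq M(x) \vee M(y) \leq M(x) \vee y$. The base case $N = n \in I$ is exactly the content of the previous (unnumbered) lemma, so nothing further is needed there.

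For the inductive step, I write $N = nN'$ with $n \in I$ and $N' \in \mathcal{B}_I$ of strictly smaller length, apply the inductive hypothesis to the pair $(M, N')$ to produce $\widetilde{Q} \in \mathcal{B}_I$ with $\widetilde{Q}(x \vee y) \leq M(x) \vee N'(y)$, and set $Q := n\widetilde{Q}$. Applying $n$ to both sides (by monotonicity), then (Cn), and finally contractivity of $n$ on the summand $nM(x) \leq M(x)$ yields
\[ Q(x \vee y) \leq n(M(x) \vee N'(y)) \leq nM(x) \vee nN'(y) \leq M(x) \vee N(y), \]
which is the required inequality. Hence $Q = n\widetilde{Q}$ witnesses the claim.

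I do not foresee any real obstacle: the argument is essentially a direct iteration of the technique already used in the preceding lemma, and $Q$ is produced by prepending the modal symbols of $N$ one at a time to the inductive witness. The only point requiring a little care is to perform the induction on the ``outer'' side of $N$, since applying a new modal symbol on the outside is precisely what allows (Cn) to split the join cleanly and lets contractivity absorb the $nM(x)$ summand back into $M(x)$.
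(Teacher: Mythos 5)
Your proof is correct and follows essentially the approach the paper intends: the paper omits the argument, noting it is "similar to the one of the previous Lemma," and your induction on the outer symbol of $N$ (prepending $n$ to the inductive witness and using monotonicity, (Cn), and contractivity, with the preceding lemma covering the single-operator case) is exactly that same technique with the roles of the two blocks exchanged. No gaps.
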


\begin{lemma} \label{I filtro de supremo} Let \( \mathbf{A} \in \mathcal{R_{C}}(I) \) and \( a,b \in \mathbf{A} \). Then 
\[ \mathsf{Fg^{\mathbf{A}}}\left( a \vee b \right) = \mathsf{Fg^{\mathbf{A}}}(a) \cap \mathsf{Fg^{\mathbf{A}}}(b) \]
\end{lemma}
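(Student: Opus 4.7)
The plan is to establish both inclusions. The inclusion $\mathsf{Fg}^{\mathbf{A}}(a \vee b) \subseteq \mathsf{Fg}^{\mathbf{A}}(a) \cap \mathsf{Fg}^{\mathbf{A}}(b)$ is immediate: since $a \leq a \vee b$ and $\mathsf{Fg}^{\mathbf{A}}(a)$ is an up-set containing $a$, it contains $a \vee b$, and symmetrically for $\mathsf{Fg}^{\mathbf{A}}(b)$. So both filters contain $\mathsf{Fg}^{\mathbf{A}}(a \vee b)$, hence so does their intersection.

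For the reverse inclusion, I would fix $x \in \mathsf{Fg}^{\mathbf{A}}(a) \cap \mathsf{Fg}^{\mathbf{A}}(b)$ and apply Lemma \ref{generated filters R(I)} twice to obtain $I$-blocks $M_1, \ldots, M_n$ and $N_1, \ldots, N_p$ with
\[ \prod_{i=1}^{n} M_i(a) \leq x \qquad \text{and} \qquad \prod_{j=1}^{p} N_j(b) \leq x. \]
Then I would invoke Lemma \ref{lemma I-blocks} on each pair $(i,j)$ to produce $I$-blocks $Q_{ij}$ satisfying $Q_{ij}(a \vee b) \leq M_i(a) \vee N_j(b)$, and set $P = \prod_{i,j} Q_{ij}(a \vee b)$. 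By Lemma \ref{generated filters R(I)}, $P$ already belongs to $\mathsf{Fg}^{\mathbf{A}}(a \vee b)$, so it suffices to show $P \leq x$: this will place $x$ in the filter, since the filter is an up-set.

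The core step is therefore to bound $\prod_{i,j}(M_i(a) \vee N_j(b)) \leq x$. Using distributivity of $\cdot$ over $\vee$, I would expand this product as the join, over all choice functions $c$ assigning to each pair $(i,j)$ either the value $M_i(a)$ or $N_j(b)$, of the products $\prod_{i,j} c(i,j)$. The decisive observation is a dichotomy on $c$: either every row $i$ selects $M_i(a)$ in at least one column (so $\prod_{i,j} c(i,j)$ dominates the full product $\prod_i M_i(a)$, the surplus factors being $\leq 1$ by integrality), or, negating this, some row $i_0$ selects $N_j(b)$ for every $j$ (so the product dominates $\prod_j N_j(b)$). In both cases the summand is $\leq x$, so the join is $\leq x$, as required.

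The main obstacle I foresee is purely bookkeeping: handling the distributive expansion cleanly and making the case split precise, while invoking integrality ($y \leq 1$ for all $y \in A$) to discard the surplus factors in each summand. All the modal content, including the use of contractivity and axiom (Cm), is already packaged inside Lemma \ref{lemma I-blocks}.
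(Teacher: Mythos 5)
Your proof is correct, and its skeleton (the trivial inclusion via up-sets, then Lemma \ref{generated filters R(I)} plus Lemma \ref{lemma I-blocks} for the converse) matches the paper's. The substantive difference is how much of Lemma \ref{generated filters R(I)} you use: the paper's proof takes a \emph{single} block $M$ with $M(a)\leq x$ and a single $N$ with $N(b)\leq x$ and applies Lemma \ref{lemma I-blocks} once, whereas the generation lemma only guarantees \emph{products} $\prod_{i=1}^{n} M_i(a)\leq x$ and $\prod_{j=1}^{p} N_j(b)\leq x$. Your distributive expansion of $\prod_{i,j}\left(M_i(a)\vee N_j(b)\right)$ over choice functions, combined with the row dichotomy and integrality, is precisely what is needed to handle the general product case --- it is the modal analogue of the classical $(a\vee b)^{2n}\leq a^{n}\vee b^{n}$ argument of Hart--Rafter--Tsinakis --- so your version is in fact more complete than the printed one, at the cost of the bookkeeping you anticipate. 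One cosmetic point: in the first branch of the dichotomy you say the summand ``dominates'' $\prod_i M_i(a)$; what you mean (and what your parenthetical about surplus factors makes clear) is that it contains each $M_i(a)$ as a factor and therefore lies \emph{below} $\prod_i M_i(a)$ by integrality, which is the inequality you then correctly use.
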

\begin{proof}
On the one hand, notice that from \( a,b \leq a \vee b \) it follows \( \mathsf{Fg^{\mathbf{A}}}(a \vee b) \subseteq \mathsf{Fg^{\mathbf{A}}}(a) \cap \mathsf{Fg^{\mathbf{A}}}(b) \). On the other hand, let \( x \in \mathsf{Fg^{\mathbf{A}}}(a) \cap \mathsf{Fg^{\mathbf{A}}}(b) \). Then, by Lemma \ref{generated filters R(I)}, there are blocks \( M,N\in \mathcal{B}_{I} \) such that \( M(a) \leq x \) and \( N(b) \leq x \), so $M(a) \vee N(b) \leq x.$
From Lemma \ref{lemma I-blocks} there exists \( Q\in \mathcal{B}_{I} \) such that $Q(a \vee b) \leq M(a) \vee N(b) \leq x$. Thus \( x \in \mathsf{Fg^{\mathbf{A}}}(a \vee b) \), as claimed.
\end{proof}

We are ready to prove the main result of this section.

\begin{theorem} 
In $\mathcal{R}_{c}(I)$, the variety generated by chains is \( \mathcal{R_{C}}(I) \). 
\end{theorem}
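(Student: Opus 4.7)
The plan is to establish the two inclusions $\mathcal{V}(\mathcal{C}) \subseteq \mathcal{R_{C}}(I)$ and $\mathcal{R_{C}}(I) \subseteq \mathcal{V}(\mathcal{C})$ separately. The first is immediate: on any chain $\mathbf{A} \in \mathcal{C}$, for any $a,b \in A$ either $a \leq b$ or $b \leq a$, so by Lemma \ref{props riRigs}(14) one of $a\to b,\, b\to a$ equals $1$, giving (P); moreover $a \vee b \in \{a,b\}$, whence $m(a \vee b) \in \{m(a), m(b)\}$ and in particular $m(a\vee b) \leq m(a)\vee m(b)$, giving (Cm). Since $\mathcal{R_{C}}(I)$ is defined equationally, this yields $\mathcal{V}(\mathcal{C}) \subseteq \mathcal{R_{C}}(I)$.

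For the reverse inclusion, I would invoke Birkhoff's subdirect representation theorem: it suffices to prove that every subdirectly irreducible $\mathbf{A} \in \mathcal{R_{C}}(I)$ belongs to $\mathbb{I}\mathbb{S}\mathbb{P}(\mathcal{C})$, and I will show something stronger, namely that every such $\mathbf{A}$ is already a chain. Fix $a,b \in A$. From axiom (P) we have $(a \to b) \vee (b \to a) = 1$, so applying Lemma \ref{I filtro de supremo} yields
$$\mathsf{Fg}^{\mathbf{A}}(a \to b) \,\cap\, \mathsf{Fg}^{\mathbf{A}}(b \to a) \;=\; \mathsf{Fg}^{\mathbf{A}}(1) \;=\; \{1\}.$$

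Finally, I would exploit the isomorphism $\mathsf{Fi}(\mathbf{A}) \cong \mathsf{Con}(\mathbf{A})$ from Lemma \ref{Congruence I-Filters}. Since $\mathbf{A}$ is subdirectly irreducible, $\mathsf{Con}(\mathbf{A})$ has a monolith, which is transported by the isomorphism to a least non-trivial I-filter $M \subseteq A$. If neither $\mathsf{Fg}^{\mathbf{A}}(a \to b)$ nor $\mathsf{Fg}^{\mathbf{A}}(b \to a)$ equaled $\{1\}$, both would contain $M$, forcing the displayed intersection to contain $M$, contradicting that it equals $\{1\}$. Hence $a \to b = 1$ or $b \to a = 1$, that is, $a \leq b$ or $b \leq a$, so $\mathbf{A}$ is a chain and in particular lies in $\mathcal{C}$. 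By Birkhoff, any $\mathbf{A} \in \mathcal{R_{C}}(I)$ is a subdirect product of such chains, so $\mathbf{A} \in \mathbb{I}\mathbb{S}\mathbb{P}(\mathcal{C}) \subseteq \mathcal{V}(\mathcal{C})$, completing the argument. The only genuinely non-routine ingredient is Lemma \ref{I filtro de supremo}, already established above; once it is in hand, the subdirect-irreducible-equals-chain step falls out cleanly from the filter/congruence correspondence.
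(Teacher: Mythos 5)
Your proposal is correct and follows essentially the same route as the paper: both inclusions are handled identically, with the key step being that every subdirectly irreducible member of $\mathcal{R_{C}}(I)$ must be a chain, deduced from (P), Lemma \ref{I filtro de supremo}, and the filter--congruence isomorphism of Lemma \ref{Congruence I-Filters}. Your explicit appeal to the monolith is just a spelled-out version of the paper's observation that $\Delta^{\mathbf{A}}$ would otherwise be an intersection of two non-trivial congruences, which is impossible in a subdirectly irreducible algebra.
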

\begin{proof}
Note that \( \mathcal{C} \subseteq \mathcal{R_{C}}(I) \), thus it is immediate that \( \mathcal{V}(\mathcal{C}) \subseteq \mathcal{R_{C}}(I) \). In order to prove that \( \mathcal{R_{C}}(I) \subseteq \mathcal{V}(\mathcal{C}) \) it will be enough to prove that every subdirectly irreducible member of \( \mathcal{R_{C}}(I) \) belongs to \( \mathcal{C} \). Indeed, suppose that \( \mathbf{A}  \in \mathcal{R_{C}}(I) \) is  subdirectly irreducible and \( \mathbf{A} \) is not a chain. Then, there are \( a,b \in \mathbf{A} \) such that \( a \nleq b \) and \( b \nleq a \). From Lemma \ref{props riRigs} (3), the latter is equivalent to \( a \to b \neq 1 \) and \( b \to a \neq 1 \). Taking into account that \( \mathbf{A} \in \mathcal{R_{C}}(I) \), by (P) we have that \( 1 = (a \to b) \vee (b \to a) \). Hence by Lemmas \ref{Congruence I-Filters} and \ref{I filtro de supremo}, we get
\[ \Delta^{\mathbf{A}} = \theta_{\mathsf{Fg^{\mathbf{A}}}(a \to b)} \cap \theta_{\mathsf{Fg^{\mathbf{A}}}(b \to a)}, \]
wich is absurd. Therefore,  \( \mathbf{A} \) has to be a chain.
\end{proof}

\section{Compatible functions on $\mathcal{R}(I)$}\label{Compatible functions}

Extending a logic in a way that the logic remains unchanged by means of adding connectives is a problem widely studied along the literature. The typical example that one has in mind is given by the classical propositional calculus, in which if an axiomatic extension defines implicitly a new connective, then it must be deductively equivalent to a combination of classical connectives. In general this is not true for all logics, as the intuitionistic propositional calculus shows for the case of the \emph{succesor operator}. In \cite{CC2001} it was proved that when regarding such an operator, it is possible to define an axiomatic extension that defines implicitly a new connective which is not deductively equivalent to any combination of intuitionistic connectives.

As one can naturally expect, there is an algebraic counterpart of the implicit definability of connectives. It is called \emph{implicit definability by equations of new operations in varieties} \cite{CC2001} and we will focus on it along Section \ref{equationally defined op}. Nevertheless, due to the core of such an approach relies on the concept of \emph{compatible function} \cite{C20ican 04}, we will devote the contents of this section to the study of such functions in $\mathcal{R}(I)$. The results we will present will be obtained by means of the description of filter generation we provided in Lemma \ref{generated filters R(I)}. We recall some definitions below. 

\begin{definition} \label{cfc} 
Let $\V$ be a variety, $\mathbf{A}\in \V$ and $f:A^{n} \rightarrow A$ a map.
\begin{enumerate}
\item We say that $f$ is compatible with a congruence
$\theta$ of $\mathbf{A}$ if $(a_{i},b_{i}) \in \theta$ for $i=1, \ldots ,n$
implies $(f(a_{1}, \ldots ,a_{n}), f(b_{1}, \ldots ,b_{n})) \in
\theta $.
\item We say that $f$ is a compatible function of
$\mathbf{A}$ if it is compatible with all the congruences of $\mathbf{A}$.
\item We say that $f$ is $\V$-compatible (or just compatible, if the context is clear) if for every $\mathbf{A}\in \V$, $f$ is compatible with $\mathbf{A}$.
\end{enumerate}
\end{definition}

We stress that there are some useful connections between principal congruences and the compatible operations. Since these are well known in the litterature, we omit the details of the proof. 

\begin{remark}\label{rem: compatible functions}
Let $\V$ be a variety and $\mathbf{A} \in \V$.
\begin{enumerate}
\item Let $f:A \rightarrow A$ be a function. Then $f$ is compatible if and only if $(f(a),f(b))\in \mathsf{Cg}^{\mathbf{A}}(a,b)$ for every $a, b \in A$.
\item Let $f : A^{n} \rightarrow A$ be a function and $\vec{a} = (a_{1}, \ldots, a_{n}) \in A^{n}$. For $i = 1,\ldots, n$, we define unary functions $f^{\vec{a}}_{i}: A \rightarrow A$ by
\[
f^{\vec{a}}_{i}(x):= f(a_{1}, \ldots,a_{i-1}, x, a_{i+1}, \ldots, a_{n}).
\]
Then $f$ is compatible if and only if for every $\vec{a} \in A^{n}$ and every $i= 1,\ldots, n$ the functions $f^{\vec{a}}_{i} : A \to A$ are compatible.
\end{enumerate}
\end{remark}

The following Lemmas are immediate consequences of Lemma \ref{Congruence I-Filters} and Corollary \ref{cor:Generated filter intersection}.

\begin{lemma}\label{lem: Obvious lemma}
Let $\mathbf{A}\in \mathcal{R}(I)$, $\theta \in \mathsf{Con}(\mathbf{A})$ and $a,b\in A$. Then
$(a,b) \in \theta$ if and only $a\ast b \in 1/\theta$.
\end{lemma}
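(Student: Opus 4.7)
The statement is essentially an unpacking of the isomorphism already established in Lemma \ref{Congruence I-Filters}. The plan is to observe that the maps $\theta \mapsto F_{\theta} = 1/\theta$ and $F \mapsto \theta_{F}$ are mutually inverse, so for any $\theta \in \mathsf{Con}(\mathbf{A})$ we have $\theta = \theta_{F_{\theta}} = \theta_{1/\theta}$; then unfolding the definition of $\theta_{F}$ gives exactly the desired equivalence. I would therefore spend one line citing Lemma \ref{Congruence I-Filters}(3) and one line writing out the definition.

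For readers who want a direct argument without invoking the inverse-pair property, I would split into the two implications. For the forward direction, assume $(a,b) \in \theta$. Since $\theta$ is a congruence and $a \to a = 1$, we get $(a \to b, 1) = (a \to b, a \to a) \in \theta$, and symmetrically $(b \to a, 1) \in \theta$; multiplying gives $(a \ast b, 1) \in \theta$, i.e.\ $a \ast b \in 1/\theta$. For the converse, assume $a \ast b \in 1/\theta$, i.e.\ $(a \to b)(b \to a) \equiv 1 \pmod{\theta}$. Working in the quotient ririg, we have $[a \to b]\,[b \to a] = [1]$. Using integrality (every element is $\le 1$), monotonicity of $\cdot$ gives $[a \to b]\,[b \to a] \le [a \to b]$ and $\le [b \to a]$, so both $[a \to b] = [1]$ and $[b \to a] = [1]$. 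By Lemma \ref{props riRigs}(14) applied in $\mathbf{A}/\theta$, this forces $[a] = [b]$, i.e.\ $(a,b) \in \theta$.

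The only substantive step is the backward direction, and the only minor subtlety is extracting $a \to b \equiv 1$ and $b \to a \equiv 1$ individually from the congruence $(a\to b)(b\to a) \equiv 1$; this uses integrality of the ririg quotient (which is automatic since $\mathcal{R}(I)$ is a variety). Otherwise the argument is entirely routine, which matches the label ``Obvious lemma'' attached by the authors. I expect the written proof to be just two or three lines.
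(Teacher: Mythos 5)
Your proposal is correct and its main route is exactly the paper's: the authors give no written proof, stating only that the lemma is an immediate consequence of Lemma \ref{Congruence I-Filters}, which is precisely your one-line citation of the inverse pair $\theta=\theta_{F_\theta}$ together with the definition of $\theta_F$. Your supplementary direct argument is also sound (the forward direction via $a\to a=1$ and the backward direction via integrality and Lemma \ref{props riRigs}(14) in the quotient), but it is additional detail rather than a different approach.
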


\begin{lemma}  \label{lem:Principal Congruences principal filters}
Let $\mathbf{A}\in \mathcal{R}(I)$ and $a,b\in A$. Then $(x,y) \in \mathsf{Cg}^{\mathbf{A}}(a,b)$ if and only if $M(a\ast b) \leq x\ast y$ for some $M\in \mathcal{B}_{I}$.
\end{lemma}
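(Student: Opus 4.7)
The plan is to obtain the stated characterization by chaining the three preceding results into a single sequence of equivalences, keeping $a, b \in A$ fixed throughout. First I would apply Lemma~\ref{lem: Obvious lemma} with $\theta = \mathsf{Cg}^{\mathbf{A}}(a,b)$, which immediately yields $(x,y) \in \mathsf{Cg}^{\mathbf{A}}(a,b)$ if and only if $x \ast y$ lies in $F_{\mathsf{Cg}^{\mathbf{A}}(a,b)} = 1/\mathsf{Cg}^{\mathbf{A}}(a,b)$, reducing the congruence membership to an $I$-filter membership question.

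Next I would identify the filter in question via Corollary~\ref{cor:Generated filter intersection}(1), which gives $F_{\mathsf{Cg}^{\mathbf{A}}(a,b)} = \mathsf{Fg}^{\mathbf{A}}(a \ast b)$. To unpack this principal $I$-filter concretely, I would then invoke Lemma~\ref{generated filters R(I)} applied to the singleton $\{a \ast b\}$, which describes $\mathsf{Fg}^{\mathbf{A}}(a \ast b)$ as the up-set of the elements $\prod_{i=1}^{n} M_i(a \ast b)$ ranging over $M_1, \ldots, M_n \in \mathcal{B}_I$. One implication then follows at once: if $M(a \ast b) \leq x \ast y$ for a single block $M$, taking $n=1$ places $x\ast y$ in the up-set, hence in the filter, so $(x,y) \in \mathsf{Cg}^{\mathbf{A}}(a,b)$.

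The main obstacle is the converse: given a product $\prod_{i=1}^{n} M_i(a \ast b) \leq x \ast y$, I must produce a \emph{single} $M \in \mathcal{B}_I$ with $M(a \ast b) \leq x \ast y$. For this reduction I would lean on Remark~\ref{important prop} in its product form $m(u) m(v) \leq m(uv)$, which extends inductively to arbitrary $I$-blocks and so permits me to push a product of images of $a \ast b$ inside a suitable nested block; combined with the observation that $c := a \ast b$ satisfies $c \leq 1$ and therefore $c^{n} \leq c$ (using the monotonicity of $\cdot$ from Lemma~\ref{props riRigs}(5)), this should dominate the resulting $M(c^n)$ by $M(c)$. The delicate point, and what I expect to be the real work, is that the $M_i$ need not coincide and need not be contractive, so the fusion must respect the order of composition of blocks; I would therefore attempt an induction on $n$, fusing two adjacent factors at a time by choosing as intermediate block the concatenation of the corresponding $M_i$ and absorbing the remaining slack via $c^2 \leq c$, until a single block bounding the whole product from above is produced.
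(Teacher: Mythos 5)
Your reduction of the problem to filter generation via Lemma~\ref{lem: Obvious lemma}, Lemma~\ref{cor:Generated filter intersection}(1) and Lemma~\ref{generated filters R(I)} is exactly the route the paper has in mind (the paper gives no further detail, declaring the lemma an immediate consequence of Lemma~\ref{Congruence I-Filters} and Lemma~\ref{cor:Generated filter intersection}), and your argument for the direction ``single block implies congruence membership'' is complete. The gap is precisely where you locate it, but it cannot be closed by the tools you name: both $m(u)m(v)\leq m(uv)$ and $c^{2}\leq c$ produce \emph{upper} bounds for a product $\prod_{i=1}^{n}M_{i}(c)$ (for instance $m(c)m(c)\leq m(c^{2})\leq m(c)$), whereas the fusion step requires a \emph{lower} bound of the form $M(c)$ for a single block $M$. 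No concatenation of the $M_{i}$ can supply this.

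In fact the single-block statement is false as it stands. Take $\mathbf{A}$ to be the three-element \L ukasiewicz chain $\{0,\tfrac{1}{2},1\}$ with $I=\{m\}$ and $m=id_{A}$ (which is a modal operator, even a contractive one), and let $a=\tfrac{1}{2}$, $b=1$, so that $c=a\ast b=\tfrac{1}{2}$. Since $c\cdot c=0$, Lemma~\ref{generated filters R(I)} gives $\mathsf{Fg}^{\mathbf{A}}(c)=A$, hence $\mathsf{Cg}^{\mathbf{A}}(a,b)=A^{2}$ and in particular $(0,1)\in\mathsf{Cg}^{\mathbf{A}}(a,b)$; but $0\ast 1=0$ while $M(c)=\tfrac{1}{2}$ for every $M\in\mathcal{B}_{I}$, so no single block works. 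The statement that actually follows from Lemma~\ref{generated filters R(I)} --- and the form that is used later, e.g.\ in Theorem~\ref{prop:k-ary compatible functions} --- replaces $M(a\ast b)\leq x\ast y$ by $\prod_{i=1}^{n}M_{i}(a\ast b)\leq x\ast y$ for finitely many $M_{1},\ldots,M_{n}\in\mathcal{B}_{I}$. You should prove that weaker version instead; your first two steps already establish it.
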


As an straightforward application of Lemma \ref{lem:Principal Congruences principal filters} and Remark \ref{rem: compatible functions} (1), we get a characterization of unary compatible functions.

\begin{lemma} \label{lem: 1-ary compatible functions}
Let $\mathbf{A}\in \mathcal{R}(I)$ and $f:A \rightarrow A$ a function. Then $f$ is compatible if and only if for every $a,b \in A$ there exists $M\in \mathcal{B}_{I}$ such that $M(a\ast b) \leq f(a) \ast f(b)$.
\end{lemma}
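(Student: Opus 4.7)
The plan is to obtain the statement as a direct combination of the two results cited immediately before it, with no additional algebra required. By Remark \ref{rem: compatible functions}(1), a unary map $f\colon A\to A$ is compatible if and only if $(f(a),f(b))\in \mathsf{Cg}^{\mathbf{A}}(a,b)$ holds for every $a,b\in A$. Hence the whole task reduces to rephrasing membership in the principal congruence $\mathsf{Cg}^{\mathbf{A}}(a,b)$ as an inequality involving $\ast$, which is exactly what Lemma \ref{lem:Principal Congruences principal filters} supplies.

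For the forward direction I would fix $a,b\in A$ and apply Remark \ref{rem: compatible functions}(1) to obtain $(f(a),f(b))\in \mathsf{Cg}^{\mathbf{A}}(a,b)$. Then, instantiating Lemma \ref{lem:Principal Congruences principal filters} with $x=f(a)$ and $y=f(b)$, I extract an $I$-block $M\in \mathcal{B}_{I}$ such that $M(a\ast b)\leq f(a)\ast f(b)$. For the converse I would assume the inequality condition and, for arbitrary $a,b\in A$, use the $I$-block $M$ provided by the hypothesis together with the other implication of Lemma \ref{lem:Principal Congruences principal filters} to conclude that $(f(a),f(b))\in \mathsf{Cg}^{\mathbf{A}}(a,b)$. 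One more invocation of Remark \ref{rem: compatible functions}(1) then yields that $f$ is compatible.

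There is essentially no obstacle here: all the substantive work has been done upstream, in Lemma \ref{lem:Principal Congruences principal filters}, whose proof leans on the identification $F_{\mathsf{Cg}^{\mathbf{A}}(a,b)}=\mathsf{Fg}^{\mathbf{A}}(a\ast b)$ from Lemma \ref{cor:Generated filter intersection} together with the explicit description of generated $I$-filters given in Lemma \ref{generated filters R(I)}. The only thing to be careful about is that both implications of Lemma \ref{lem:Principal Congruences principal filters} are being used, one in each direction of the biconditional being proved, so the proof should be written symmetrically rather than as a single chain of equivalences.
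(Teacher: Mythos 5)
Your proposal is correct and matches the paper exactly: the paper itself presents this lemma as ``a straightforward application of Lemma \ref{lem:Principal Congruences principal filters} and Remark \ref{rem: compatible functions} (1)'', which is precisely the combination you carry out. Nothing further is needed.
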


Now we characterize $k$-ary compatible functions.

\begin{theorem} \label{prop:k-ary compatible functions}
Let $\mathbf{A}\in \mathcal{R}(I)$ and let $f:A^{k} \rightarrow A$ be a function. The following conditions are equivalent:
\begin{enumerate}
\item $f$ is compatible.
\item For every $a_{1},\ldots, a_{k}, b_{1},\ldots,b_{k} \in A$ there exists $M_1,\ldots ,M_n\in \mathcal{B}_{I}$
such that \[M_1(a_1 \ast b_1)\cdot \ldots \cdot M_k(a_k \ast b_k) \leq f(a_{1}, \ldots a_{k}) \ast f(b_{1},\ldots ,b_{k}).\]
\end{enumerate}
\end{theorem}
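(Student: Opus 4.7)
The proof naturally splits into the two implications, and both reduce to manipulations with the operation $\ast$ and the description of I-filters already established.

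For the direction $(1)\Rightarrow(2)$, the plan is to pass through the unary restrictions $f_i^{\vec{c}}$ using Remark 3.15(2) and chain the resulting inequalities via a telescoping argument. Fix $a_1,\ldots,a_k,b_1,\ldots,b_k\in A$ and define the intermediate tuples $\vec{c}^{\,j}=(b_1,\ldots,b_j,a_{j+1},\ldots,a_k)$ for $j=0,\ldots,k$, so $\vec{c}^{\,0}=\vec{a}$ and $\vec{c}^{\,k}=\vec{b}$. Since $f$ is compatible, each unary restriction $f_j^{\vec{c}^{\,j-1}}$ is compatible, and by Lemma \ref{lem: 1-ary compatible functions} there is an $I$-block $M_j$ with
\[
M_j(a_j\ast b_j)\leq f_j^{\vec{c}^{\,j-1}}(a_j)\ast f_j^{\vec{c}^{\,j-1}}(b_j)=f(\vec{c}^{\,j-1})\ast f(\vec{c}^{\,j}).
\]
Then I would multiply these $k$ inequalities together.

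The key subsidiary fact needed here is the transitivity-like inequality $(u\ast v)(v\ast w)\leq u\ast w$, which I would derive from the standard residuated-lattice property $(u\to v)(v\to w)\leq u\to w$ (itself a direct application of Lemma \ref{props riRigs}(4)) together with commutativity of $\cdot$. Iterating this yields
\[
\prod_{j=1}^{k}\bigl(f(\vec{c}^{\,j-1})\ast f(\vec{c}^{\,j})\bigr)\leq f(\vec{c}^{\,0})\ast f(\vec{c}^{\,k})=f(\vec{a})\ast f(\vec{b}),
\]
and combining with the previous bounds gives exactly $M_1(a_1\ast b_1)\cdot\ldots\cdot M_k(a_k\ast b_k)\leq f(\vec{a})\ast f(\vec{b})$, as desired.

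For $(2)\Rightarrow(1)$, I would use the filter-congruence correspondence of Lemma \ref{Congruence I-Filters}. Let $\theta\in\mathsf{Con}(\mathbf{A})$ and $(a_i,b_i)\in\theta$ for every $i$; by Lemma \ref{lem: Obvious lemma} this means $a_i\ast b_i\in 1/\theta$. Since $1/\theta$ is an I-filter, it is closed under every $I$-block (each modal operator preserves it and $I$-blocks are compositions of such), so $M_i(a_i\ast b_i)\in 1/\theta$, and closure under $\cdot$ gives $\prod_{i=1}^{k}M_i(a_i\ast b_i)\in 1/\theta$. The hypothesis and the fact that $1/\theta$ is an up-set then force $f(\vec{a})\ast f(\vec{b})\in 1/\theta$, i.e.\ $(f(\vec{a}),f(\vec{b}))\in\theta$, proving compatibility.

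The only real obstacle is the telescoping step in $(1)\Rightarrow(2)$: one has to be careful that the unary slices used at each stage are taken with respect to the correct intermediate tuple $\vec{c}^{\,j-1}$, so that the right-hand sides of the $k$ unary inequalities truly form a chain $f(\vec{c}^{\,0})\ast f(\vec{c}^{\,1}),\ldots,f(\vec{c}^{\,k-1})\ast f(\vec{c}^{\,k})$ amenable to the transitivity lemma. Everything else is bookkeeping with the already-proven filter generation and congruence correspondence.
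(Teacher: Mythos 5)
Your proposal is correct and follows essentially the same route as the paper's own proof: the same telescoping decomposition through the intermediate tuples $\vec{c}^{\,j}$, the same key inequality $(u\ast v)(v\ast w)\leq u\ast w$ for chaining the unary bounds, and the same use of the filter--congruence correspondence for the converse direction. If anything, your bookkeeping of the intermediate tuples is slightly more careful than the paper's.
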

\begin{proof}
We stress that by general properties of ririgs (Lemma \ref{props riRigs}), for $a,b,c\in A$ we have
 \begin{equation} \label{Term s}
(a\ast b) \cdot (b\ast c) \leq a\ast c.
\end{equation}
Now, suppose that $f$ is compatible and let $a_{1}, \ldots a_{k},
b_{1}, \ldots ,b_{k} \in A$. Consider $\vec{c}_{1}=(a_{1},a_{2}...,a_{k})$, $\vec{c}_{1}=(b_{1}, a_{2}...,a_{k})$, ..., $\vec{c}_{k}=(b_{1},b_{2}...,b_{k})$. Then by Remark \ref{rem: compatible functions} $f^{\vec{c}_{j}}_{i}$ is compatible, for every $i,j\in \{1,... k\}$. Since $(f^{\vec{c}_{i}}_{i}(a_{i}),f^{\vec{c}_{i}}_{i}(b_{i}))\in \mathsf{Cg}^{\mathbf{A}}(a_{i},b_{i})$ for every $1\leq i\leq k$, by Lemma \ref{lem: 1-ary compatible functions} there are $I$-blocks $M_{1}, \ldots ,M_{k}$ such that
\begin{displaymath}
\begin{array}{l}
M_{1}(a_{1} \ast b_{1}) \leq f(a_{1}, a_{2}, \ldots
,a_{k})\ast f(b_{1},a_{2}, \ldots ,a_{k})\\
M_{2}(a_{2} \ast b_{2}) \leq f(b_{1},a_{2},a_{3}, \ldots
,a_{k})\ast f(b_{1},b_{2},a_{3}, \ldots , a_{k})\\
\vdots \\
M_{k}(a_{k} \ast b_{k}) \leq f(b_{1}, \ldots
,b_{k-1},a_{k})\ast f(b_{1},b_{2}, \ldots ,b_{k}).
\end{array}
\end{displaymath}
Hence, by taking the product at both sides, and applying (\ref{Term s}) we get
\[
M_{1}(a_{1} \ast b_{1})\cdot \ldots \cdot M_{k}(a_{k} \ast b_{k}) \leq
f(a_{1}, \ldots a_{k}) \ast f(b_{1}, \ldots ,b_{k}).
\]
On the other hand, suppose that for every $a_{1},\ldots, a_{k}, b_{1}, \ldots, b_{k} \in A$ there exist $M_1,\ldots ,M_n\in \mathcal{B}_{I}$
such that $M_1(a_1 \ast b_1)\cdot \ldots \cdot M_k(a_k \ast b_k) \leq f(a_{1}, \ldots a_{k}) \ast f(b_{1},\ldots ,b_{k})$. By Lemma \ref{lem: Obvious lemma} we have $(a_{i} \ast b_{i},1) \in \theta$, so $(M_{i}(a_{i} \ast b_{i}),1) \in \theta $ for every $1\leq i \leq k$. Hence, $(M_{1}(a_{1} \ast b_{1})\cdot \ldots \cdot M_{k}(a_{k} \ast b_{k}),1)\in \theta$. Since $1/\theta$ is increasing, from our assumption we conclude 
$(f(a_{1}, \ldots a_{k}) \ast f(b_{1}, \ldots ,b_{k}),1)\in \theta$. Thus, again by
Lemma \ref{lem: Obvious lemma} we obtain that $(f(a_{1}, \ldots ,a_{k}),f(b_{1},
\ldots ,b_{k})) \in \theta$. Therefore, $f$ is compatible, as desired. 
\end{proof}

Now we study compatible functions in $\mathcal{R}(I_{\omega})$. We stress that Lemmas \ref{lem: 1-ary compatible functions finite} and \ref{lem:Principal Congruences principal filters finite} and Theorem \ref{prop:k-ary compatible functions finite} are a straightforward consequence of Lemmas \ref{lem: Obvious lemma} and \ref{generated filter finite case} and  its proofs are quite similar to the proofs of Lemmas \ref{lem:Principal Congruences principal filters} and \ref{lem: 1-ary compatible functions} and Theorem \ref{prop:k-ary compatible functions}, respectively. We left the details to the reader. 

\begin{lemma} \label{lem: 1-ary compatible functions finite}
Let $\mathbf{A}\in \mathcal{R}(I)$ and $f:A \rightarrow A$ a function. Then $f$ is compatible if and only if for every $a,b \in A$ there exists $l\in \mathbb{N}$ such that $\lambda^{l}(a\ast b) \leq f(a) \ast f(b)$.
\end{lemma}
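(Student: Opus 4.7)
The plan is to mirror, step by step, the proof of Lemma~\ref{lem: 1-ary compatible functions}, replacing the description of principal I-filters by $I$-blocks with the description in terms of iterates of $\lambda$ provided by Lemma~\ref{generated filter finite case}. The overall skeleton is the standard route from compatibility of a unary map to membership of a single element in a principal I-filter.

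First, I would invoke Remark~\ref{rem: compatible functions}(1) to rephrase compatibility of $f$ as the requirement that $(f(a), f(b)) \in \mathsf{Cg}^{\mathbf{A}}(a, b)$ for every $a, b \in A$. Applying Lemma~\ref{lem: Obvious lemma} to $\theta = \mathsf{Cg}^{\mathbf{A}}(a, b)$ then converts this into the filter-membership condition $f(a) \ast f(b) \in F_{\mathsf{Cg}^{\mathbf{A}}(a, b)}$, and by Lemma~\ref{cor:Generated filter intersection}(1) the congruence-filter on the right coincides with the principal I-filter $\mathsf{Fg}^{\mathbf{A}}(a \ast b)$. Specializing Lemma~\ref{generated filter finite case} to the singleton $X = \{a \ast b\}$ yields that $f(a) \ast f(b)$ lies in this I-filter exactly when $\lambda^{l}(a \ast b)^{n} \leq f(a) \ast f(b)$ for some $l, n \in \mathbb{N}$.

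The reverse implication of the lemma is then immediate: a witness of the form $\lambda^{l}(a \ast b) \leq f(a) \ast f(b)$ is the case $n = 1$ of the above, and the chain of equivalences can be run backwards to conclude compatibility. The main obstacle, and the only point where the argument departs from a transparent repetition of the proof of Lemma~\ref{lem: 1-ary compatible functions}, is to collapse a product-form witness $\lambda^{l}(a \ast b)^{n}$ into a single-iterate witness $\lambda^{L}(a \ast b)$. For this I would exploit Lemma~\ref{propiedades de lambda}: since by item~(3) each $\lambda^{l}$ is itself a modal operator, Remark~\ref{important prop} gives $\lambda^{l}(x)\lambda^{l}(y) \leq \lambda^{l}(xy)$; combined with the monotonicity of $\lambda$, item~(1), and the descending chain of item~(4), this should let me absorb the exponent $n$ into the number of iterations and choose $L$ large enough so that $\lambda^{L}(a \ast b) \leq \lambda^{l}(a \ast b)^{n} \leq f(a) \ast f(b)$. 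This absorption is the step that will demand the most care.
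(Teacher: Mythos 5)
Your reduction to the filter-membership condition is exactly the paper's intended route (the paper only says the proof is ``quite similar'' to that of Lemma~\ref{lem: 1-ary compatible functions}, with Lemma~\ref{generated filter finite case} in place of Lemma~\ref{generated filters R(I)}), and you are right that everything hinges on the absorption step: for the singleton $X=\{a\ast b\}$, Lemma~\ref{generated filter finite case} only yields a witness of the form $\lambda^{l}(a\ast b)^{n}\leq f(a)\ast f(b)$, and one must pass from this to a single iterate $\lambda^{L}(a\ast b)$. That step is where your proposal breaks down. The tools you invoke all point the wrong way: Remark~\ref{important prop} applied to the modal operator $\lambda^{l}$ gives $\lambda^{l}(x)\lambda^{l}(y)\leq\lambda^{l}(xy)$, which bounds $\lambda^{l}(c^{n})$ \emph{from below} by $\lambda^{l}(c)^{n}$, whereas you need an iterate of $\lambda$ at $c$ sitting \emph{below} the power $\lambda^{l}(c)^{n}$. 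Monotonicity together with the descending chain $\lambda^{l+1}(c)\leq\lambda^{l}(c)$ gives $\lambda^{L}(c)\leq\lambda^{l}(c)$ for $L\geq l$, but never $\lambda^{L}(c)\leq\lambda^{l}(c)^{n}$ for $n\geq 2$: there is no contraction available for the monoid product.

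In fact the absorption is impossible in general, so no repair of this step can succeed under the stated hypotheses. Take $\mathbf{A}$ to be the standard \L ukasiewicz ririg on $[0,1]$ (so $x\cdot y=\max(0,x+y-1)$) with $I=\{m\}$ and $m$ the constant map $m(x)=1$, which is a modal operator. Then $\lambda(x)=x\cdot m(x)=x$, so $\lambda^{l}(c)=c$ for every $l$, while $\mathsf{Fg}^{\mathbf{A}}(c)=\uparrow\{c^{n}\colon n\in\mathbb{N}\}=A$ for any $c<1$; hence $\mathbf{A}$ is simple and every $f\colon A\to A$ is compatible. Choosing $f$ with $f(0.9)=0$ and $f(1)=1$ gives $f(0.9)\ast f(1)=0$, while $\lambda^{l}(0.9\ast 1)=0.9\not\leq 0$ for every $l$, so the stated equivalence fails. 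What your chain of equivalences actually proves is: $f$ is compatible if and only if for all $a,b$ there exist $l,n$ with $\lambda^{l}(a\ast b)^{n}\leq f(a)\ast f(b)$ (equivalently, a single parameter $N$ with $\lambda^{N}(a\ast b)^{N}\leq f(a)\ast f(b)$). The backward implication with $n=1$, which you handle correctly, is unaffected. Note that the same defect already afflicts the template you are mirroring, Lemma~\ref{lem: 1-ary compatible functions}: in the example above $M(0.9)\in\{0.9,1\}$ for every $I$-block $M$, so no single block witnesses compatibility either; the product of blocks appearing in Lemma~\ref{generated filters R(I)} cannot be collapsed to one block.
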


\begin{lemma}  \label{lem:Principal Congruences principal filters finite}
Let $\mathbf{A}\in \mathcal{R}(I_{\omega})$ and $a,b\in A$. Then $(x,y) \in \mathsf{Cg}^{\mathbf{A}}(a,b)$ if and only if $\lambda^{l}(a\ast b) \leq x\ast y$ for some $l\in \mathbb{N}$.
\end{lemma}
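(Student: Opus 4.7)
The plan is to adapt the argument of Lemma \ref{lem:Principal Congruences principal filters} to the finite-$I$ setting by swapping the general filter-generation formula Lemma \ref{generated filters R(I)} for its $\lambda$-based finite counterpart Lemma \ref{generated filter finite case}. The strategy is to reduce congruence membership to filter membership and then unfold the filter in terms of $\lambda$.

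Concretely, I would first invoke Lemma \ref{lem: Obvious lemma} to rewrite the condition $(x,y)\in\mathsf{Cg}^{\mathbf{A}}(a,b)$ as $x\ast y\in F_{\mathsf{Cg}^{\mathbf{A}}(a,b)}$, and then identify $F_{\mathsf{Cg}^{\mathbf{A}}(a,b)}$ with $\mathsf{Fg}^{\mathbf{A}}(a\ast b)$ via Lemma \ref{cor:Generated filter intersection}(1). The problem therefore reduces to characterizing when $x\ast y$ lies in $\mathsf{Fg}^{\mathbf{A}}(a\ast b)$. Applying Lemma \ref{generated filter finite case} to the singleton $X=\{a\ast b\}$, this is equivalent to the existence of $l,n\in\mathbb{N}$ such that $\lambda^{l}(a\ast b)^{n}\leq x\ast y$. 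The $(\Leftarrow)$ direction of the stated equivalence then follows immediately by specializing to $n=1$.

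The principal work lies in the $(\Rightarrow)$ direction, which requires collapsing a power $\lambda^{l}(a\ast b)^{n}$ into a single iterate $\lambda^{l'}(a\ast b)$; I expect this absorption to be the main obstacle. The natural route is to produce, for the given $l$ and $n$, an exponent $l'\in\mathbb{N}$ with $\lambda^{l'}(a\ast b)\leq \lambda^{l}(a\ast b)^{n}$ and conclude by transitivity of $\leq$. The available tools are the fact that $\lambda$ is itself a modal operator (Lemma \ref{propiedades de lambda}(3)), so that iterating $\lambda$ is well-behaved with respect to products, together with the descending chain $\lambda^{l+1}(x)\leq \lambda^{l}(x)$ from Lemma \ref{propiedades de lambda}(4); combined with the integrality of the ririg (so $d^{n}\leq d$ for every $d\in A$ and every $n$), iterating $\lambda$ sufficiently many times should absorb the exponent $n$ into the iteration count, yielding the desired $l'$ and completing the proof.
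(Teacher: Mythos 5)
Your reduction is exactly the one the paper intends (the paper omits the proof, saying only that the result follows from Lemmas \ref{lem: Obvious lemma} and \ref{generated filter finite case} as in the general case), and you are right that after applying Lemma \ref{generated filter finite case} to $X=\{a\ast b\}$ the condition you actually obtain is $\lambda^{l}(a\ast b)^{n}\leq x\ast y$ for some $l,n\in\mathbb{N}$. The gap is precisely the absorption step you flag, and it cannot be closed: in a general member of $\mathcal{R}(I_{\omega})$ there need not exist $l'$ with $\lambda^{l'}(d)\leq\lambda^{l}(d)^{n}$. Iterating $\lambda$ multiplies $\lambda^{l}(d)$ by $\prod_{j}m_{j}(\lambda^{l}(d))$, not by $\lambda^{l}(d)$ itself, and integrality gives $\lambda^{l}(d)^{n}\leq\lambda^{l}(d)$, which is the wrong direction for your purposes. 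Concretely, take $\mathbf{A}$ to be the standard \L ukasiewicz chain $[0,1]$ viewed as a ririg and $I=\{m\}$ with $m(x)=1$ for all $x$ (this is a modal operator). Then $\lambda=\mathrm{id}$, so $\lambda^{l}(d)=d$ for every $l$, while $\mathsf{Fg}^{\mathbf{A}}(1/2)=\uparrow\{(1/2)^{n}\colon n\in\mathbb{N}\}=A$ because $(1/2)^{2}=0$. Hence $(0,1)\in\mathsf{Cg}^{\mathbf{A}}(1/2,1)$, yet $\lambda^{l}\bigl((1/2)\ast 1\bigr)=1/2\nleq 0=0\ast 1$ for all $l$.

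So the step would fail, and in fact the example shows that the statement as literally written fails too; what your (correct) reduction proves is the version with the extra exponent, namely $(x,y)\in\mathsf{Cg}^{\mathbf{A}}(a,b)$ if and only if $\lambda^{l}(a\ast b)^{n}\leq x\ast y$ for some $l,n\in\mathbb{N}$. The single-iterate form does become available under an additional hypothesis such as contractivity of some $m_{j}$ (as in $\mathcal{R}_{c}(I)$): there one has $\lambda^{l+1}(x)\leq \lambda^{l}(x)\cdot m_{j}(\lambda^{l}(x))\leq \lambda^{l}(x)^{2}$, whence $\lambda^{l+r}(x)\leq \lambda^{l}(x)^{2^{r}}$ and the exponent is absorbed exactly as you hoped. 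You should also note that the same issue is latent in the paper's general Lemma \ref{lem:Principal Congruences principal filters}, which likewise replaces the product $M_{1}(a\ast b)\cdots M_{n}(a\ast b)$ coming from Lemma \ref{generated filters R(I)} by a single block; your proof cannot be expected to repair a discrepancy already present in the result it is modeled on.
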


\begin{theorem} \label{prop:k-ary compatible functions finite}
Let $\mathbf{A}\in \mathcal{R}(I_{\omega})$ and let $f:A^{k} \rightarrow A$ be a function. The following conditions are equivalent:
\begin{enumerate}
\item $f$ is compatible.
\item For every $a_{1},\ldots, a_{k}, b_{1},\ldots,b_{k} \in A$ there exists $l\geq 0$
such that \[\lambda^{l}(a_1 \ast b_1)\cdot \ldots \cdot \lambda^{l}(a_k \ast b_k) \leq f(a_{1}, \ldots a_{k}) \ast f(b_{1},\ldots ,b_{k}).\]
\end{enumerate}
\end{theorem}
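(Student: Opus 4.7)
The plan is to mirror the proof of Theorem \ref{prop:k-ary compatible functions}, replacing the separate $I$-blocks $M_{i}$ by a single iterate $\lambda^{l}$. What makes the finite case cleaner is Proposition \ref{propiedades de lambda}(4), namely $\lambda^{l+1}(x)\leq \lambda^{l}(x)$, which lets one collapse finitely many exponents into their maximum.

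For $(1)\Rightarrow(2)$, I would fix $a_{1},\ldots,a_{k},b_{1},\ldots,b_{k}\in A$ and form the telescoping tuples
\[ \vec{c}_{0}=(a_{1},a_{2},\ldots,a_{k}),\ \vec{c}_{1}=(b_{1},a_{2},\ldots,a_{k}),\ \ldots,\ \vec{c}_{k}=(b_{1},b_{2},\ldots,b_{k}). \]
Each unary slice $f^{\vec{c}_{i-1}}_{i}$ inherits compatibility by Remark \ref{rem: compatible functions}(2), so Lemma \ref{lem: 1-ary compatible functions finite} produces some $l_{i}\in\mathbb{N}$ with
\[ \lambda^{l_{i}}(a_{i}\ast b_{i}) \leq f(\vec{c}_{i-1}) \ast f(\vec{c}_{i}). \]
Setting $l=\max\{l_{1},\ldots,l_{k}\}$, Proposition \ref{propiedades de lambda}(4) upgrades each of these to $\lambda^{l}(a_{i}\ast b_{i}) \leq f(\vec{c}_{i-1}) \ast f(\vec{c}_{i})$. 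Multiplying coordinatewise and chaining by the ririg inequality $(x\ast y)(y\ast z)\leq x\ast z$ (exactly the consequence of Lemma \ref{props riRigs} used in the proof of Theorem \ref{prop:k-ary compatible functions}) then delivers
\[ \lambda^{l}(a_{1}\ast b_{1})\cdot\ldots\cdot\lambda^{l}(a_{k}\ast b_{k}) \leq f(a_{1},\ldots,a_{k})\ast f(b_{1},\ldots,b_{k}). \]

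For $(2)\Rightarrow(1)$, let $\theta\in\mathsf{Con}(\mathbf{A})$ with $(a_{i},b_{i})\in\theta$. By Lemma \ref{lem: Obvious lemma} each $a_{i}\ast b_{i}$ lies in $1/\theta$; by Lemma \ref{Congruence I-Filters}(1) the class $1/\theta$ is an $I$-filter, and Proposition \ref{filters finite case} then gives closure under $\lambda$, so $\lambda^{l}(a_{i}\ast b_{i})\in 1/\theta$ for every $l$. Closure of $1/\theta$ under $\cdot$ plus upward closure, combined with hypothesis (2), place $f(a_{1},\ldots,a_{k})\ast f(b_{1},\ldots,b_{k})$ in $1/\theta$, and a final appeal to Lemma \ref{lem: Obvious lemma} recovers $(f(a_{1},\ldots,a_{k}), f(b_{1},\ldots,b_{k}))\in\theta$.

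I do not anticipate any genuine obstacle: the only ingredient beyond a mechanical translation of the proof of Theorem \ref{prop:k-ary compatible functions} is the unification of the $k$ exponents into a single $l$, and that is precisely what the descending chain $\lambda^{l+1}\leq\lambda^{l}$ of Proposition \ref{propiedades de lambda}(4) affords. This is presumably why the authors were comfortable leaving the details to the reader.
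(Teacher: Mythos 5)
Your proof is correct and is exactly the argument the paper intends: it omits the proof of Theorem~\ref{prop:k-ary compatible functions finite}, stating only that it is ``quite similar'' to that of Theorem~\ref{prop:k-ary compatible functions}, and your adaptation --- telescoping tuples, Lemma~\ref{lem: 1-ary compatible functions finite} on each unary slice, and the inequality $(x\ast y)(y\ast z)\leq x\ast z$ --- is that same argument. The one genuinely new ingredient, collapsing the exponents $l_{1},\ldots,l_{k}$ to their maximum via $\lambda^{l+1}(x)\leq\lambda^{l}(x)$, is correctly identified and handled.
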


We conclude this section by showing the locally affine completess of $\mathcal{R}(I_{\omega})$. A variety $\mathcal{V}$ is said to be \emph{affine complete} if for every $\mathbf{A}\in \mathcal{V}$, any compatible function of $\mathbf{A}$ is given by a polynomial of $\mathbf{A}$. It is called is \emph{locally affine complete} provided that for every $\mathbf{A}\in \mathcal{V}$, any compatible function is given by a polynomial on each finite subset of $A$.
\\

Let $\mathbf{A}\in \mathcal{R}(I_{\omega})$ and  let $f:A^{k} \rightarrow A$ be a compatible function. Let $\vec{a}
=(a_{1}, \ldots ,a_{k})$ and $\vec{b} = (b_{1}, \ldots ,b_{k})$ be elements
of $A^{k}$. Recall that from Theorem \ref{prop:k-ary compatible functions finite}, there exists a natural number $l$ satisfying condition (2) associated to $(\vec{a},\vec{b})$. In what follows, we write $n(\vec{a},\vec{b})$ for such a number $l$. Then, if $B$ is a finite subset of $A^{k}$ and $\vec{a}\in B$ is fixed, we have a finite family of natural numbers, namely, $\lbrace n(\vec{a},\vec{x}): \vec{x}\in B \rbrace$. Let $n_{\vec{a}}$ be the maximum of this family. The following theorem shows that every compatible function on finite subsets can be written as a join of some suitable elements.

\begin{theorem} \label{lac}
Let $\mathbf{A}\in \mathcal{R}(I_{\omega})$, let $f:A^{k} \rightarrow A$ be a compatible function. If $B$ is a finite subset of $A^{k}$ and $\vec{x}\in B$, then $f(x_{1},...,x_{n})=\bigvee T_{\vec{x}}$, where
\[
T_{\vec{x}} = \lbrace \prod_{i=1}^{k} \lambda^{n_{\vec{a}}}(a_{i} \ast x_{i}) \cdot f(a_{1},...,a_{k}):
\vec{a}\in B\rbrace.
\]
\end{theorem}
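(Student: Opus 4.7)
The plan is to prove the claimed identity $f(\vec{x}) = \bigvee T_{\vec{x}}$ by establishing both inequalities. The lower bound $f(\vec{x}) \leq \bigvee T_{\vec{x}}$ will be immediate once I exhibit $f(\vec{x})$ itself as a member of $T_{\vec{x}}$. The upper bound $\bigvee T_{\vec{x}} \leq f(\vec{x})$ is where the real work lies, and it rests on two ingredients: the monotonicity of the iterates of $\lambda$ from Lemma \ref{propiedades de lambda}(4), which lets me dominate every contributor to $T_{\vec{x}}$ using the uniform exponent $n_{\vec{a}}$, and the ririg identity $(p \ast q)\cdot p \leq q$ that extracts $f(\vec{x})$ from the compatibility estimate of Theorem \ref{prop:k-ary compatible functions finite}.

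For the lower bound, I would observe that $\vec{x} \in B$, so the index $\vec{a} = \vec{x}$ contributes an element to $T_{\vec{x}}$. Since $x_i \ast x_i = (x_i \to x_i)(x_i \to x_i) = 1$ by Lemma \ref{props riRigs}(3), and $\lambda$ fixes $1$ by Lemma \ref{propiedades de lambda}(2), iterating yields $\lambda^{n_{\vec{x}}}(x_i \ast x_i) = 1$ for each $i$. Thus the corresponding term collapses to $1 \cdot f(\vec{x}) = f(\vec{x})$, which forces $f(\vec{x}) \leq \bigvee T_{\vec{x}}$.

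For the upper bound, I would fix $\vec{a} \in B$ and bound the single term $\prod_{i=1}^{k} \lambda^{n_{\vec{a}}}(a_i \ast x_i) \cdot f(\vec{a})$. By the definition of $n_{\vec{a}}$ one has $n(\vec{a},\vec{x}) \leq n_{\vec{a}}$, so Lemma \ref{propiedades de lambda}(4) gives $\lambda^{n_{\vec{a}}}(a_i \ast x_i) \leq \lambda^{n(\vec{a},\vec{x})}(a_i \ast x_i)$ for every $i$. Taking the product (using monotonicity of $\cdot$, which follows from residuation) and then invoking Theorem \ref{prop:k-ary compatible functions finite}(2) with $l = n(\vec{a},\vec{x})$, I would obtain $\prod_{i}\lambda^{n_{\vec{a}}}(a_i \ast x_i) \leq f(\vec{a}) \ast f(\vec{x})$. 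Multiplying both sides by $f(\vec{a})$ reduces the task to checking $(f(\vec{a}) \ast f(\vec{x})) \cdot f(\vec{a}) \leq f(\vec{x})$.

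This residual inequality is routine: expanding $f(\vec{a}) \ast f(\vec{x}) = (f(\vec{a}) \to f(\vec{x}))(f(\vec{x}) \to f(\vec{a}))$, I would use integrality (so $f(\vec{x}) \to f(\vec{a}) \leq 1$) together with monotonicity of $\cdot$ to drop the second factor, and then apply Lemma \ref{props riRigs}(4) in the form $f(\vec{a}) \cdot (f(\vec{a}) \to f(\vec{x})) \leq f(\vec{x})$. Joining the resulting inequalities over $\vec{a} \in B$ closes the argument. I do not foresee any serious obstacle; the only genuinely clever step is the choice of $n_{\vec{a}}$ as a uniform maximum over $\vec{y} \in B$, which is precisely what allows a single exponent to service every term indexed by that $\vec{a}$ simultaneously.
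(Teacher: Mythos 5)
Your proposal is correct and follows essentially the same route as the paper: exhibit $f(\vec{x})$ as the element of $T_{\vec{x}}$ indexed by $\vec{a}=\vec{x}$, and bound every other element above by $f(\vec{x})$ via Theorem \ref{prop:k-ary compatible functions finite} and residuation. Your explicit use of Lemma \ref{propiedades de lambda}(4) to justify replacing the exponent $n(\vec{a},\vec{x})$ by the uniform maximum $n_{\vec{a}}$ is a small detail the paper leaves implicit, but it is not a different argument.
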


\begin{proof}
Let $\vec{x}\in B$. Then, since $f$ is compatible by assumption, for every $\vec{a}\in B$ and Theorem \ref{prop:k-ary compatible functions finite} we have
\[\prod_{i=1}^{k} \lambda^{n_{\vec{a}}}(a_{i} \ast x_{i}) \leq f(\vec{a})\ast f(\vec{x})\leq f(\vec{a})\rightarrow f(\vec{x}).\]
Thus \[\prod_{i=1}^{k} \lambda^{n_{\vec{a}}}(a_{i} \ast x_{i})\cdot f(\vec{a})\leq f(\vec{x}).\]
So $f(\vec{x})$ is an upper bound of $T_{\vec{x}}$. Finally, notice that because $x_{i}\rightarrow x_{i}=1$, then by Theorem \ref{prop:k-ary compatible functions finite} we may conclude
\[\prod_{i=1}^{k} \lambda^{n_{\vec{x}}}(x_{i} \ast x_{i}) \cdot f(\vec{x})= f(\vec{x})\]
Therefore $f(\vec{x})\in T_{\vec{x}}$. This concludes the proof.
\end{proof}

As an immediate consequence of Theorem \ref{lac}, we get:

\begin{corollary}
$\mathcal{R}(I_{\omega})$ is locally affine complete.
\end{corollary}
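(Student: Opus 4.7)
The plan is to observe that Theorem \ref{lac} already does all the heavy lifting: it writes the value of a compatible function $f$ at every point of a finite set $B \subseteq A^{k}$ as a finite join of products of terms that are polynomial in the argument. So what remains is to check that the expression appearing in the theorem defines a genuine polynomial of $\mathbf{A}$, and then package this as local affine completeness.

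Concretely, given $\mathbf{A} \in \mathcal{R}(I_{\omega})$, a compatible $f \colon A^{k} \to A$ and a finite $B \subseteq A^{k}$, I would define
\[
p(y_{1},\dots,y_{k}) \;:=\; \bigvee_{\vec{a}\in B}\, \Bigl(\prod_{i=1}^{k} \lambda^{n_{\vec{a}}}(a_{i}\ast y_{i})\Bigr)\cdot f(\vec{a}),
\]
where $n_{\vec{a}}$ is the natural number associated with each $\vec{a}\in B$ as in the paragraph preceding Theorem \ref{lac}. Since $B$ is finite, this is a finite join of finite products; each $\lambda^{n_{\vec{a}}}$ is a concrete term operation built from $\lambda$ (and hence from $\cdot$ and the finitely many $m_{j}\in I$); and each $a_{i}$ and each value $f(\vec{a})$ is an element of $A$, entering as a parameter. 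Thus $p$ is a polynomial of $\mathbf{A}$ in the variables $y_{1},\dots,y_{k}$.

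The second step is to verify that $p$ agrees with $f$ on $B$. Fix $\vec{x}\in B$. Substituting $y_{i}:=x_{i}$ turns $p(\vec{x})$ into exactly the join $\bigvee T_{\vec{x}}$ that appears in the statement of Theorem \ref{lac}, which equals $f(\vec{x})$ by that theorem. Hence $p$ is a polynomial interpolating $f$ on $B$, as required.

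There is really no hard step: all the work has already been done in Theorem \ref{lac}, whose proof hinged on the characterization of $k$-ary compatible functions in Theorem \ref{prop:k-ary compatible functions finite} (which in turn depended on the single-operator description of generated $I$-filters from Lemma \ref{generated filter finite case}). The only point worth being explicit about is that the exponent $n_{\vec{a}}$, although it varies with the parameter $\vec{a}$, is fixed once $\vec{a}\in B$ is fixed, so each summand of $p$ is a bona fide polynomial expression and the outer finite join produces a polynomial in the usual sense. With this remark, the corollary follows immediately.
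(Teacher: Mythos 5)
Your proposal is correct and matches the paper's intent exactly: the paper states this corollary as an immediate consequence of Theorem \ref{lac} with no further argument, and what you have written is precisely the natural unpacking of that claim (the join $\bigvee T_{\vec{x}}$ is a polynomial in the variables $y_{1},\dots,y_{k}$ with the $a_{i}$ and $f(\vec{a})$ as parameters, and it interpolates $f$ on the finite set $B$). Your explicit remark that $n_{\vec{a}}$ is fixed once $\vec{a}\in B$ is fixed, so that each $\lambda^{n_{\vec{a}}}$ is a genuine term operation, is a worthwhile clarification but not a departure from the paper's approach.
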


\subsection{Equationally defined compatible operations on $\mathcal{R}(I)$}\label{equationally defined op}

Let $\Sigma(f)$ be a set of equations in the signature of $\mathcal{R}(I)$ augmented by a $n$-ary function symbol. We say that $\Sigma(f)$ \emph{defines an implicit operation of $\mathcal{R}(I)$} if for every $\mathbf{A}\in \mathcal{R}(I)$, there exists at most one function $f^{\mathbf{A}}:A^{n}\rightarrow A$ such that $(\mathbf{A}, f^{\mathbf{A}})\models \Sigma(f)$. We also say that $f$ is an \emph{implicit compatible operation} if for every $\mathbf{A}\in \mathcal{R}(I)$, the function $f^{\mathbf{A}}$ is compatible.
\\

Notice that due to the cardinality of $I$, the set $\Sigma(f)$ is at least countable. Therefore, it may be interesting to study whether an equationally defined compatible operation on $\mathcal{R}(I)$ can be explicitly defined by a term in the language of $\mathcal{R}(I)$ and implicitly defined by a finite set of equations in the language of $\mathcal{R}(I)$. To achieve this goal and inspired in \cite{CV2016}, we introduce some notions first.

Let $f \in \mathcal{L}$ be a function symbol, and let $\mathcal{K}$ be a class of $\mathcal{L}$-structures.  We write $f=t|_{\mathcal{\varphi}}$ to express that there exist an $\mathcal{L}$-term $t(\vec{x})$ and a first order $\mathcal{L}$-formula $\varphi(\vec{x})$ such that the following conditions hold: 
\begin{enumerate}
\item $\mathcal{K}\models \varphi(\vec{x})$,
\item $f^{\mathbf{A}}(\vec{a})=t^{\mathbf{A}}(\vec{a})$ if $\mathbf{A}\models \varphi(\vec{a})$, for every $\mathbf{A}\in \mathcal{K}$ and $\vec{a}\in A^{n}$.
\end{enumerate}
We say that a term $t(\vec{x})$ \emph{represents $f$ in $\mathcal{K}$} if $f^{\mathbf{A}}(\vec{a})=t^{\mathbf{A}}(\vec{a})$ for every $\mathbf{A}\in \mathcal{K}$ and $\vec{a}\in A^{n}$.

\begin{theorem}
Let $\Sigma(f)$ be a set of equations defining implicitly an operation $f$ in $\mathcal{R}(I)$ and consider 
\[\mathcal{M}_{f}=\{ \mathbf{A}\in \mathcal{R}(I)\colon (\mathbf{A}, f^{\mathbf{A}}) \models \Sigma(f)\; \text{for some}\;f^{\mathbf{A}}:A^{n}\rightarrow A\}.\]
Then, the following are equivalent:
\begin{enumerate}
\item $f$ is representable by a unique $n$-ary term $t$ in the language of $\mathcal{R}(I)$ and $f=t|_{\mathcal{\varphi}}$ in $\mathcal{M}_{f}$ some formula $(\bigwedge p=q)$-formula $\varphi(\vec{x})$ in the language of $\mathcal{R}(I)$.
\item The following conditions hold:
\begin{itemize}
\item[(a)] $\mathcal{M}_{f}$ is closed under subalgebras. 
\item[(b)] For every $\mathbf{A}\in \mathcal{M}_{f}$, $f^{\mathbf{A}}$ is compatible.
\item[(c)] For all $\mathbf{A}, \mathbf{B}\in \mathcal{M}_{f}$, all $\mathbf{A}_{0}\leq \mathbf{A}$, $\mathbf{B}_{0}\leq \mathbf{B}$, all homomorphisms $\sigma: \mathbf{A}_{0}\rightarrow \mathbf{B}_{0}$, and $a_{0},..., a_{n}\in A$ with $f^{\mathbf{A}}(\vec{a})\in B_{0}$, we have 
\[\sigma(f^{\mathbf{A}}(\vec{a}))=f^{\mathbf{A}}(\sigma(a_{1}),\ldots, \sigma(a_{n})).\]

\end{itemize}
\end{enumerate}

\end{theorem}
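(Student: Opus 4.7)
The plan is to prove $(1)\Rightarrow(2)$ and $(2)\Rightarrow(1)$ separately, with most of the work concentrated in the reverse direction via a free-algebra argument in the spirit of \cite{CV2016}.

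For $(1)\Rightarrow(2)$, I would start from $f=t|_{\varphi}$ with $\mathcal{M}_{f}\vDash\varphi$, which entails $f^{\mathbf{A}}=t^{\mathbf{A}}$ for every $\mathbf{A}\in\mathcal{M}_{f}$. Condition (a) then follows because substituting $t$ for $f$ turns $\Sigma(f)$ into identities in the signature of $\mathcal{R}(I)$, which pass to subalgebras: for any $\mathbf{B}\leq\mathbf{A}\in\mathcal{M}_{f}$, the interpretation $f^{\mathbf{B}}:=t^{\mathbf{B}}$ verifies $\Sigma(f)$. Condition (b) is immediate because term operations are compatible with every congruence. For (c), homomorphisms commute with term operations, so $\sigma(f^{\mathbf{A}}(\vec{a}))=\sigma(t^{\mathbf{A}}(\vec{a}))=t^{\mathbf{B}}(\sigma(\vec{a}))=f^{\mathbf{B}}(\sigma(\vec{a}))$.

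For $(2)\Rightarrow(1)$, I would consider the free algebra $\mathbf{F}=\mathbf{F}_{\mathcal{R}(I)}(x_{1},\ldots,x_{n})$ on $n$ generators. Since $\Sigma(f)$ is a set of equations in the augmented signature, $\mathcal{M}_{f}$ is automatically closed under direct products; together with (a), this makes $\mathcal{M}_{f}$ a quasivariety of $\mathcal{R}(I)$. Assuming $\mathbf{F}\in\mathcal{M}_{f}$ (the key technical point, discussed below), the element $f^{\mathbf{F}}(x_{1},\ldots,x_{n})$ is represented by some term $t(\vec{x})$ in the signature of $\mathcal{R}(I)$, unique modulo the identities of $\mathcal{R}(I)$ by the universal property of $\mathbf{F}$. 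Given $\mathbf{A}\in\mathcal{M}_{f}$ and $\vec{a}\in A^{n}$, let $\sigma\colon\mathbf{F}\to\mathbf{A}$ be the evaluation homomorphism sending $x_{i}\mapsto a_{i}$, and set $\mathbf{B}_{0}=\sigma(\mathbf{F})\leq\mathbf{A}$, which belongs to $\mathcal{M}_{f}$ by (a). Applying (c) with $\mathbf{A}_{0}=\mathbf{F}$ (noting $f^{\mathbf{F}}(\vec{x})\in F$) yields $\sigma(f^{\mathbf{F}}(\vec{x}))=f^{\mathbf{A}}(\vec{a})$; since $\sigma$ is a homomorphism, $\sigma(t^{\mathbf{F}}(\vec{x}))=t^{\mathbf{A}}(\vec{a})$, and combining these gives $f^{\mathbf{A}}(\vec{a})=t^{\mathbf{A}}(\vec{a})$. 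Thus $f=t$ throughout $\mathcal{M}_{f}$, and the conjunction-of-equations formula $\varphi$ can be taken as the trivial identity $x_{1}\approx x_{1}$.

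The hard part is establishing $\mathbf{F}\in\mathcal{M}_{f}$: condition (a) alone only yields closure under subalgebras, not the existence of $\mathbf{F}$ as a subalgebra of some member of $\mathcal{M}_{f}$. If $\mathcal{M}_{f}$ generates $\mathcal{R}(I)$ as a quasivariety, then $\mathbf{F}$ embeds into a suitable product of algebras in $\mathcal{M}_{f}$, and the argument above goes through. Otherwise, one has to work instead with the relatively free algebra $\mathbf{F}_{\mathcal{M}_{f}}(n)$ and show that $f^{\mathbf{F}_{\mathcal{M}_{f}}(n)}(\vec{x})$ is still expressible as a term in the original language of $\mathcal{R}(I)$, leveraging the implicit equational definability of $f$ together with condition (c). In this more delicate setting, the formula $\varphi$ plays a genuine role, encoding the equational type of the generating tuple $\vec{x}$ in $\mathbf{F}_{\mathcal{M}_{f}}(n)$ and capturing precisely the conditions under which $t$ represents $f$.
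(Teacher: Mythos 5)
Your forward direction $(1)\Rightarrow(2)$ is fine, but the reverse direction has a genuine gap that you yourself flag and then do not close. Your main argument hinges on $\mathbf{F}_{\mathcal{R}(I)}(x_{1},\ldots,x_{n})$ belonging to $\mathcal{M}_{f}$, and this is not available: closure of $\mathcal{M}_{f}$ under subalgebras and products does not put the absolutely free algebra of the whole variety inside $\mathcal{M}_{f}$ unless $\mathcal{M}_{f}$ generates $\mathcal{R}(I)$ as a quasivariety, which need not hold (the motivating example of the successor on Heyting algebras is precisely a case where the analogous class is proper). Your fallback --- working with the relatively free algebra of the quasivariety generated by $\mathcal{M}_{f}$ and ``leveraging'' implicit definability and (c) --- is exactly where the real work lies, and it is left as a gesture. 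Note also that in your sketch condition (b), compatibility, is never actually used in the reverse direction, which is a sign the argument is not yet the right one: compatibility is essential to showing that $f$ evaluated at the free generators is a term. Finally, your suggestion that $\varphi$ can be taken to be the trivial identity sidesteps the second conjunct of (1) rather than proving it; the existence of a $(\bigwedge p\approx q)$-formula defining $f$ is a separate equivalence that your proposal does not establish.

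The paper does not prove either direction from scratch. It splits the statement into two independent equivalences and cites known results for each: term representability of $f$ is equivalent to (a) together with (b) by Lemma 5 of Caicedo's paper on implicit connectives of algebraizable logics, and definability of $f$ by a $(\bigwedge p\approx q)$-formula is equivalent to (c) by Theorem 5.3 of Campercholi--Vaggione, applied to the class $\mathcal{K}=\{(\mathbf{A},f^{\mathbf{A}})\colon\mathbf{A}\in\mathcal{M}_{f}\}$ after observing that $\mathcal{K}$ is first order (Beth definability) and closed under products. If you want a self-contained proof you would need to reproduce the content of those two theorems, in particular the free-algebra-plus-compatibility argument of Caicedo's lemma; as it stands your proposal does not.
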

\begin{proof}
Observe that from Lemma 5 of \cite{C20ican 04}, $f$ is representable by a unique $n$-ary term $t$ in the language of $\mathcal{R}(I)$ if and only if $(a)$ and $(b)$ hold. For the last part, let $\mathcal{L}'=\mathcal{L} \cup \{f\}$, where $f$ is an $n$-ary function symbol and let us consider the following class of $\mathcal{L}'$-structures
\[\mathcal{K}=\{(\mathbf{A}, f^{\mathbf{A}})\colon  \mathbf{A}\in \mathcal{M}_{f}\}. \]
Observe that $\mathcal{K}$ is a first order class by Beth's definability theorem and it is closed by products. Hence, by Theorem 5.3 of \cite{CV2016}, there is a $(\bigwedge p=q)$-formula $\varphi(\vec{x})$ in the language of $\mathcal{R}(I)$ which defines $f$ in $\mathcal{K}$ if and only if for every $(\mathbf{A},f^{\mathbf{A}}), (\mathbf{B},f^{\mathbf{B}})\in \mathcal{K}$, $\mathbf{A}_{0}\leq \mathbf{A}$ and $\mathbf{B}_{0}\leq \mathbf{B}$ and all homomorphism $\sigma:\mathbf{A}_{0}\rightarrow \mathbf{B}_{0}$, then $\sigma:(\mathbf{A}_{0},f^{\mathbf{A}})\rightarrow (\mathbf{B}_{0},f^{\mathbf{B}})$ is a homomorphism. It is no hard to see that the latter is equivalent to $(c)$.
\end{proof}

\section{The logic associated to \( \mathcal{R}(I) \)}\label{logic R(I)}

In this section we introduce a  Hilbert-style calculus \( \mathcal{H} \) and afterwards we define a deductive system \( \mathcal{S}_{\mathcal{H}} \). We prove that \( \mathcal{S}_{\mathcal{H}} \) is Block-Pigozzi algebraizable, with equivalent variety semantics \( \mathcal{R}(I) \) and structural transformers \( \tau(\varphi):= \{ \varphi \approx 1 \} \) and \( \rho(\varphi \approx \psi):= \{ \varphi \to \psi, \psi \to \varphi \} \).  
\\

Let \( I \) be an arbitrary non empty set of symbols such that \( \mathcal{L} \cap I = \emptyset \) and let \( \mathcal{L}(I)=\{ \vee, \to, \cdot, \{m\}_{m \in I} ,0,1 \} \) be the signature of I-modal ririgs. Since there is no risk of confusion we write \( \mathbf{Fm} \) for the absolutely free algebra of formulas in the language \( \mathcal{L}(I) \). We define a constant $\top$ by \( \bot \to \bot \). The expression \( \varphi \leftrightarrow \psi \) is a shorthand to \( \left( \varphi \to \psi \right) \) and \( \left( \psi \to \varphi \right) \). Now we present the following Hilbert-style calculus, in where the axioms ($m$11) and ($m$12), and the rule ($m$Nec) range over $I$.

\vspace{0.2cm}
\textbf{Hilbert axioms}:

\begin{itemize}
\item[(1)] \(  \varphi \to \varphi \)
\item[(2)] \(  (\varphi \to \psi) \to [(\psi \to \chi) \to (\varphi \to \chi)] \)
\item[(3)] \( \varphi \cdot \psi \to \varphi \)
\item[(4)] \( \varphi \cdot \psi \to \psi \cdot \varphi \)
\item[(5)] \( [(\varphi \cdot \psi) \to \chi] \to [\psi \to (\varphi \to \chi)] \)
\item[(6)] \( [\psi \to (\varphi \to \chi)] \to [(\varphi \cdot \psi) \to \chi] \)
\item[(7)] \( \varphi \to (\varphi \vee \psi) \)
\item[(8)] \( \psi \to (\varphi \vee \psi) \)
\item[(9)] \( \varphi \cdot (\psi \vee \chi) \to [(\varphi \cdot \psi) \vee (\varphi \cdot \chi)] \)
\item[(10)] \( \bot \to \varphi \)
\item[($m$11)] \( m (\top) \leftrightarrow \top \), for all \( m \in I \)
\item[($m$12)] \( m(\varphi \to \psi) \to (m(\varphi) \to m(\psi)) \), for all \( m \in I \)
\end{itemize}

\vspace{0.2cm}

\textbf{Hilbert rules}

\begin{itemize}
    \item[(MP)] \( \{ \varphi \to \psi, \varphi \} \rhd \psi \)
    \item[($m$Nec)] \( \{ \varphi \} \rhd m(\varphi) \) for all \( m \in I \)
    \item[(\( \rhd \vee \))] \( \{ \varphi \to \chi, \psi \to \chi \} \rhd \left( \varphi \vee \psi \right) \to \chi \)
\end{itemize}

An \( \mathcal{H} \)- deduction (or proof) of \( \varphi \) from the set of formulas \( \{ \gamma_{1},..,\gamma_{k} \} \) is a finite sequence of formulas \( \varphi_{1},..., \varphi_{n}  \) such that 
\begin{itemize}
    \item[(1)] \( \varphi_{1} \) is obtained by axioms or is a formula of \( \{ \gamma_{1},..,\gamma_{k} \} \),
    \item[(2)] \( \varphi_{i} \) is obtained by applications of axioms and rules to the set \( \{ \gamma_{1},..,\gamma_{k}, \varphi_{1},..,\varphi_{i-1} \} \),
    \item[(3)] \( \varphi_{n} = \varphi \).
\end{itemize}

If there is a deduction of \( \varphi \) from the set of formulas \( \Gamma \) then we say that \( \varphi \) is a \emph{consequence of \( \Gamma \)}. Also we say that \emph{\( \varphi \) follows from \( \Gamma \)}.

\begin{definition} The deductive sistem \( S_{\mathcal{H}} = \langle \mathbf{Fm},\vdash_{S_{\mathcal{H}}} \rangle \) is defined as follows:  for each set of formulas \( \Gamma \) and \( \varphi \in \mathbf{Fm} \) 
\[ \Gamma \vdash_{S_{\mathcal{H}}} \varphi \text{ if and only if there are } \gamma_{1},...,\gamma_{n} \in \Gamma  \text{ such that } \varphi \text{ follows from } \{\gamma_{1},...,\gamma_{n}\}. \]
\end{definition}

A formula \( \varphi \) is said to be a \emph{theorem} of \( S_{\mathcal{H}} \) provided that \( \emptyset \vdash_{S_{\mathcal{H}}} \varphi \). Besides we say that a set of formulas \( \Gamma \) is a \emph{theory} of \( S_{\mathcal{H}} \) if \( \Gamma \vdash_{S_{\mathcal{H}}} \varphi \) implies \( \varphi \in \Gamma \). Some simple computations shows the following:

\begin{lemma} \label{teoremas de s} The following formulas are theorems of \( S_{\mathcal{H}} \):
\begin{itemize}
    \item[(1)] \( \varphi \to \top \),
    \item[(2)] \( \varphi \to ( \psi \to \varphi ) \),
    \item[(3)] \( (\varphi \to \psi) \to (\varphi \cdot \chi) \to (\psi \cdot \chi) \),
    \item[(4)] \( (\varphi \cdot \psi) \cdot \chi \to \varphi \cdot (\psi \cdot \chi) \).
\end{itemize}
\end{lemma}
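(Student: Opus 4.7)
The plan is to derive each item directly from the Hilbert axioms and modus ponens, using axiom (2) (hypothetical syllogism) as the engine for chaining implications and axioms (5) and (6) (residuation) to shuttle between the forms $\alpha\cdot\beta\to\gamma$ and $\beta\to(\alpha\to\gamma)$. Because the deduction theorem has not yet been established at this point of the paper, every step of the form ``from $A$ and $A\to B$ conclude $B$'' is an explicit MP, and every use of transitivity is an explicit instance of axiom (2) followed by two MPs. I would prove the items in the order (2), (1), (3), (4), since (1) uses (2), and (3)--(4) use the same pattern as (2) but with more substitutions.

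For (2), the key observation is that axiom (5), with the substitution $\varphi\mapsto\psi$, $\psi\mapsto\varphi$, $\chi\mapsto\varphi$, reduces $\varphi\to(\psi\to\varphi)$ to $(\psi\cdot\varphi)\to\varphi$. The latter follows from axiom (4), $\psi\cdot\varphi\to\varphi\cdot\psi$, together with axiom (3), $\varphi\cdot\psi\to\varphi$, by transitivity (one instance of axiom (2) plus two MPs). A single further MP with the relevant instance of axiom (5) then yields (2). For (1), recall that $\top:=\bot\to\bot$ is an instance of axiom (1) and hence a theorem; an instance of (2) with antecedent $\top$ and succedent $\varphi\to\top$ together with MP gives $\varphi\to\top$.

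For (3), I would first derive the auxiliary theorem $\chi\to(\psi\to\psi\cdot\chi)$ by applying axiom (5) to the instance $\psi\cdot\chi\to\psi\cdot\chi$ of axiom (1). The appropriate instance of axiom (2), namely $(\varphi\to\psi)\to[(\psi\to\psi\cdot\chi)\to(\varphi\to\psi\cdot\chi)]$, combined with the auxiliary theorem by two uses of transitivity and MP, produces $(\varphi\to\psi)\to[\chi\to(\varphi\to\psi\cdot\chi)]$. A final instance of axiom (6) converts $\chi\to(\varphi\to\psi\cdot\chi)$ into $(\varphi\cdot\chi)\to(\psi\cdot\chi)$, and one more transitivity step delivers (3).

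For (4), I would begin with $\varphi\cdot(\psi\cdot\chi)\to\varphi\cdot(\psi\cdot\chi)$ from axiom (1) and apply axiom (5) twice to peel off the factors $\psi\cdot\chi$ and then $\chi$, obtaining $\chi\to[\psi\to(\varphi\to\varphi\cdot(\psi\cdot\chi))]$. An axiom (6) instance collapses the body into $(\varphi\cdot\psi)\to\varphi\cdot(\psi\cdot\chi)$, and one step of transitivity yields $\chi\to[(\varphi\cdot\psi)\to\varphi\cdot(\psi\cdot\chi)]$. A final application of axiom (6) gives $(\varphi\cdot\psi)\cdot\chi\to\varphi\cdot(\psi\cdot\chi)$. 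The only real obstacle throughout is bookkeeping: without a deduction theorem, each chaining step must be mediated by an explicit instance of axiom (2) and modus ponens, so the derivations are verbose but entirely mechanical.
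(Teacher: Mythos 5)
Your derivations are correct: each item follows from axioms (1)--(6) together with modus ponens exactly as you describe, and in particular the two transitivity steps in item (3) really do suffice on their own, with no hidden appeal to exchange of antecedents. The paper offers no proof beyond the remark that ``some simple computations'' establish the lemma, so your explicit computations are precisely the argument being left to the reader.
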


\begin{remark} \label{comparasion con logica van alten} 
Notice that by the distributive axiom and the weaking axiom, the \( I \) free reduct of \( S_{\mathcal{H}} \) is an extension of the \( \{ \vee,\to,\cdot\} \)-fragment of the deductive system \( \mathbf{L} \) given in \cite{VAR}. The \( \{ \vee,\to,\cdot\} \)-fragment of \( \mathbf{L} \) is algebraizable with equivalent algebraic semantics given by the \( \{\vee,\to\} \)-subreduct of the variety \( \mathbf{Res} \) of residuated lattices. Moreover, $\mathcal{H}$ is an implicative logic (for details, see Chapter II of \cite{F}).
\end{remark}

By the previous remark and taking into account that the logic \( S_{\mathcal{H}} \) is a expansion of \( \mathbf{L} \) wich is algebraizable, we shall prove that \( S_{\mathcal{H}} \) is algebraizable too. To this end, let \( \Gamma \) be a theory of \( S_{\mathcal{H}} \). We define the relation $\Omega(\Gamma)$ by
\[ \Omega(\Gamma)= \{ (\varphi,\psi) \in \mathbf{Fm} \times \mathbf{Fm} \colon \{ \varphi \to \psi, \psi \to \varphi \} \subseteq \Gamma \}. \] 

Straightforward computations show that \( \Omega(\Gamma) \) is a congruence on \( \mathbf{Fm} \) compatible with \( \Gamma \), in the sense that \( \gamma \in \Gamma \) if and only if \( \left(\gamma,\top \right) \in \Omega\left( \Gamma \right) \). We write \( \varphi/\Omega(\Gamma) \) for the equivalence class of \( \varphi \) and we use \( \varphi \equiv \psi \) to denote that \( \left( \varphi,\psi \right) \in \Omega(\Gamma) \). In what follows, we consider the algebra \( \langle \textbf{Fm}/\Omega(\Gamma), \vee, \to ,\cdot, \{m\}_{m \in I} ,0 , 1 \rangle \) of type \( \mathcal{L}(I) \) with \( 1 := \top / \Omega(\Gamma) \) and \( 0 := \bot/ \Omega(\Gamma) \).

\begin{lemma} \label{algebra LT} The algebra \( \langle \textbf{Fm}/\Omega(\Gamma), \vee, \to ,\cdot, \{m\}_{m \in I} ,0 , 1 \rangle \) is an I-modal ririg. Moreover, the relation \( \varphi/\Omega(\Gamma) \leq_{\Gamma} \psi/\Omega(\Gamma) \) if and only if \( \varphi \to \psi \in \Gamma \) is a partial order on \( \mathbf{Fm}/\Omega(\Gamma) \), with \( 1/\Omega(\Gamma) \) as the largest element.
\end{lemma}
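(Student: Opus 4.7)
The plan is to reduce the claim to routine deductions in $\mathcal{S}_{\mathcal{H}}$. Since $\Omega(\Gamma)$ has been established as a congruence on $\mathbf{Fm}$ compatible with $\Gamma$ (so $\gamma \in \Gamma$ iff $\gamma \equiv \top$), every operation on the quotient is well-defined, and an equation $s \approx t$ holds in $\mathbf{Fm}/\Omega(\Gamma)$ iff both $s \to t$ and $t \to s$ lie in $\Gamma$. Thus verifying the I-modal ririg axioms reduces to producing the corresponding derivations. A recurring device is that $\top$ is itself a theorem (an instance of axiom (1)) and so $\top \to \chi \in \Gamma$ and $\chi \in \Gamma$ are interchangeable via MP.

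For the ririg axioms I would proceed as follows. Commutativity of $\cdot$ is axiom (4). Associativity has one direction from Lemma \ref{teoremas de s}(4) and the other by combining it with commutativity. The unit law $\top \cdot \varphi \equiv \varphi$ comes from axiom (3) in one direction; for the other, axiom (5) with antecedent $(\varphi \cdot \top) \to (\varphi \cdot \top)$ (an instance of axiom (1)) yields $\top \to (\varphi \to \varphi \cdot \top)$, and MP with $\top$ produces $\varphi \to \varphi \cdot \top$. The bounded join-semilattice structure follows from axioms (7), (8), (10), and rule $(\rhd \vee)$, which force $\vee$ to be the join with respect to $\leq_{\Gamma}$ and $\bot$ to be its least element. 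Integrality $\top \vee \varphi \equiv \top$ uses Lemma \ref{teoremas de s}(1). Distributivity has the $\leq$ direction from axiom (9) and the $\geq$ direction from Lemma \ref{teoremas de s}(3) applied to axioms (7) and (8), followed by $(\rhd \vee)$. The absorption $\varphi \cdot \bot \equiv \bot$ combines axiom (10) with residuation axiom (6). Residuation $[\varphi] \cdot [\psi] \leq_{\Gamma} [\chi]$ iff $[\varphi] \leq_{\Gamma} [\psi] \to [\chi]$ is exactly axioms (5) and (6) modulo commutativity.

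The modal clauses are immediate: axiom ($m$11) gives $m(\top) \equiv \top$, that is $m(1) = 1$, and axiom ($m$12) yields $m(\varphi \to \psi) \leq_{\Gamma} m(\varphi) \to m(\psi)$, exactly the defining conditions of a modal operator in the sense of the paper.

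For the partial order statement I would first check that $\leq_{\Gamma}$ is well-defined on classes: if $\varphi \equiv \varphi'$ and $\psi \equiv \psi'$ then $\varphi \to \psi \equiv \varphi' \to \psi'$ because $\Omega(\Gamma)$ is a congruence, and compatibility with $\Gamma$ ensures both or neither of these implications belong to $\Gamma$. Reflexivity is axiom (1); transitivity uses axiom (2) and two applications of MP; antisymmetry is the definition of $\Omega(\Gamma)$. That $1/\Omega(\Gamma)$ is the maximum follows from Lemma \ref{teoremas de s}(1), and by Lemma \ref{props riRigs}(14) this order coincides with the residuation-induced order of the ririg, so no clash arises. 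The main obstacle is bookkeeping rather than any conceptual difficulty: the subtlest steps are producing the $\geq$ directions in the unit law for $\cdot$, in associativity, and in distributivity, each of which requires a short interplay between residuation axioms (5), (6) and the weakening and monotonicity results of Lemma \ref{teoremas de s}; once these are produced, the remaining verifications are routine.
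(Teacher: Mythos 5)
Your proposal is correct and follows essentially the same route as the paper's proof: verify the ririg axioms, the residuation law, and the modal conditions by routine Hilbert-style derivations from the axioms, Lemma \ref{teoremas de s}, and the rules (MP) and $(\rhd\vee)$, then check $\leq_{\Gamma}$ is an order via axioms (1), (2) and antisymmetry from the definition of $\Omega(\Gamma)$. You supply somewhat more detail than the paper (e.g.\ the converse direction of the unit law via axiom (5), and the $\geq$ direction of distributivity), and you correctly attribute distributivity to axiom (9) where the paper's text has an apparent typo citing (10), but the argument is the same.
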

\begin{proof}
Notice that by axioms (7),(8),(9) and the inference rule \( \left( \rhd \vee \right) \) we have that  \( \langle \mathbf{Fm}/ \Omega(\Gamma), \vee, 0 \rangle \) is a commutative monoid. Besides, by axioms (3), (4) and theorems (3),(4) of Lemma \ref{teoremas de s} we have that   \( \langle \mathbf{Fm}/ \Omega(\Gamma), \cdot, 1 \rangle \) is a monoid too. Furthermore, by axiom (10) we have that the product \( \cdot \) distributes over joins.

Let us see that \( \leq_{\Gamma} \) is a order. By axiom (1) and (2) we have that \( \leq_{\Gamma} \) is a reflexive and transitive relation. The antisymmetry is immediate and by (1) of Lemma \ref{teoremas de s} it is clear that \( 1/\Omega(\Gamma) \) is the largest element of the poset $\langle \textbf{Fm}/\Omega(\Gamma),\leq_{\Gamma}\rangle$.

Observe that the residuation of the pair \( (\cdot,\to) \) follows by axioms (5) and (6). Indeed, suppose that \( \varphi/ \Omega(\Gamma) \cdot \psi/ \Omega(\Gamma) \leq_{\Gamma} \chi/\Omega(\Gamma) \), i.e.  \( (\varphi \cdot \psi) \to \chi \in \Gamma \). By axiom (5) we have that \(  [(\varphi \cdot \psi) \to \chi] \to [\psi \to (\varphi \to \chi)] \in \Gamma \), so by assumption and (MP) we have that \( \Gamma \vdash_{S_{\mathcal{H}}} \psi \to (\varphi \to \chi) \). Thus \( \psi \to (\varphi \to \chi) \in \Gamma \) so \( \psi/\Omega(\Gamma) \leq \varphi/\Omega(\Gamma) \to \chi/\Omega(\Gamma) \). Now, suppose that \( \psi/\Omega(\Gamma) \leq_{\Gamma} \varphi/\Omega(\Gamma) \to \chi/\Omega(\Gamma) \), then  \( \psi \to (\varphi \to \chi) \in \Gamma \). By axiom (6) it follows \( [\psi \to (\varphi \to \chi)] \to [(\varphi \cdot \psi) \to \chi] \in \Gamma  \). Hence by (MP) we have that \( (\varphi \cdot \psi) \to \chi \in \Gamma \) and \( \varphi/\Omega(\Gamma) \cdot \psi/\Omega(\Gamma) \leq_{\Gamma} \chi/\Omega(\Gamma) \), as claimed.

Finally, by axioms (11$m$) and (12$m$) it is clear that each \( m \in I \)  is a modal operator. Therefore the quotient algebra \( \mathbf{Fm}/\Omega(\Gamma) \) is an I-modal ririg. This concludes the proof.
\end{proof}
 
Let us consider the structural transformers \( \tau \colon \mathrm{Fm} \to \mathcal{P}(\mathrm{Eq}) \) and \( \rho \colon \mathrm{Eq} \to \mathcal{P}(\mathrm{Fm}) \), defined by \( \tau(\varphi)= \{ \varphi \approx 1 \} \) and \( \rho(\varphi \approx \psi) := \{ \varphi \to \psi, \psi \to \varphi \} \), respectively. Taking into account Lemma \ref{algebra LT} we have the following:

\begin{theorem} \label{algebraizacion} The deductive system \( S_{\mathcal{H}} \) is algebraizable with equivalent algebraic semantic given by the variety \( \mathcal{R}(I) \) and the structural transformers \( \tau(\varphi)=\{ \varphi \approx 1 \} \) and  \( \rho(\varphi \approx \psi) := \{ \varphi \to \psi, \psi \to \varphi \} \).

\end{theorem}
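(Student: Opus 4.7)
The plan is to invoke Remark \ref{remark algebraizacion}, which reduces Block--Pigozzi algebraizability to just two conditions. With the proposed transformers, these become: (a) $\Gamma \vdash_{S_{\mathcal{H}}} \varphi$ if and only if $\{\gamma \approx 1 : \gamma \in \Gamma\} \vDash_{\mathcal{R}(I)} \varphi \approx 1$; and (b) $\varphi \approx \psi \Dashv \vDash_{\mathcal{R}(I)} \{(\varphi \to \psi) \approx 1,\ (\psi \to \varphi) \approx 1\}$. Condition (b) is immediate from Lemma \ref{props riRigs}(14): in any member of $\mathcal{R}(I)$ under any valuation $v$, the equality $v(\varphi) = v(\psi)$ is equivalent to having both $v(\varphi) \to v(\psi) = 1$ and $v(\psi) \to v(\varphi) = 1$.

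For condition (a), the left-to-right implication (soundness) is proved by induction on the length of a proof in $\mathcal{H}$. One verifies that each of the Hilbert axioms (1)--(10), ($m$11), ($m$12) evaluates to $1$ in every member of $\mathcal{R}(I)$; these are routine checks from Lemma \ref{props riRigs}, Remark \ref{important prop}, and the definition of modal operator. The three rules preserve the property of evaluating to $1$: for (MP), Lemma \ref{props riRigs}(4) yields $v(\varphi) \cdot v(\varphi \to \psi) \leq v(\psi)$, so if both factors equal $1$ then $v(\psi) = 1$; for ($m$Nec), $v(\varphi) = 1$ forces $v(m(\varphi)) = m(1) = 1$; and $(\rhd \vee)$ follows from the universal property of the join given by axioms (7)--(9).

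The right-to-left implication (completeness) is handled by the standard Lindenbaum--Tarski argument. Assuming $\Gamma \nvdash_{S_{\mathcal{H}}} \varphi$, set $T = \{\psi : \Gamma \vdash_{S_{\mathcal{H}}} \psi\}$, which is a theory containing $\Gamma$ but not $\varphi$. By Lemma \ref{algebra LT}, the quotient $\mathbf{Fm}/\Omega(T)$ is an I-modal ririg; and under the canonical valuation $v(\psi) = \psi/\Omega(T)$, the compatibility of $\Omega(T)$ with $T$ ensures $v(\gamma) = 1/\Omega(T)$ for every $\gamma \in \Gamma$, whereas $v(\varphi) \neq 1/\Omega(T)$ since $\varphi \notin T$. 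This produces the required countermodel, establishing $\{\gamma \approx 1 : \gamma \in \Gamma\} \nvDash_{\mathcal{R}(I)} \varphi \approx 1$. The real conceptual burden has already been discharged by Lemma \ref{algebra LT}; the only care needed here is to observe that the modal axioms ($m$11)--($m$12) and the rule ($m$Nec) are designed precisely so that each $m \in I$ descends to a genuine modal operator on $\mathbf{Fm}/\Omega(T)$, so the construction works uniformly across the (possibly large) modal signature $I$.
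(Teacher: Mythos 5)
Your proposal is correct and follows essentially the same route as the paper: reduce to the two conditions of Remark \ref{remark algebraizacion}, verify the second via Lemma \ref{props riRigs}(14), prove soundness by checking axioms and rules, and prove completeness by the Lindenbaum--Tarski construction over the theory of consequences of $\Gamma$ using Lemma \ref{algebra LT}. Your write-up is in fact slightly more careful than the paper's, which leaves the soundness checks as ``simple computations'' and contains a small notational slip (writing $\Omega(\Gamma)$ where $\Omega(\Delta)$ is meant) that you avoid.
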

\begin{proof} We shall prove (1) and (2) of remmark \ref{remark algebraizacion}.

(1) \( (\Rightarrow) \) Suppose that \( \Gamma \vdash_{S_{\mathcal{H}}} \varphi \). We shall prove that \( \tau[\Gamma] \vDash_{\mathcal{R}(I)} \tau(\varphi) \). It will be enough to prove that \( \mathcal{R}(I) \vDash \varphi \approx 1 \) holds for each \( \varphi \) axiom of \( \mathcal{H} \) and that \(  \{ \gamma \approx 1 \colon \gamma \in  \Gamma \} \vDash_{\mathcal{R}(I)}  \varphi \approx 1 \) holds for each \( \Gamma \rhd \varphi \) Hilbert rule of \( \mathcal{H} \). Some simple computations shows this fact.

\( (\Leftarrow) \) We proceed by the contrapositive. Suppose that \( \Gamma \not \vdash_{S_{\mathcal{H}}} \varphi \) and let us consider the theory 
\[ \Delta = \{ \psi \in \mathbf{Fm} \colon \Gamma \vdash_{S_{\mathcal{H}}} \psi \}. \]
It is clear that \( \Gamma \subseteq \Delta \) and by Lemma \ref{algebra LT}, we have that the algebra 
\[ \left\langle \mathbf{Fm}/\Omega(\Delta), \vee,\to,\cdot,\{m \}_{m \in I},0,1 \right\rangle \]
is a member of \( \mathcal{R}(I) \). If we consider the canonical homomorphism \( \pi \colon \mathbf{Fm} \to \mathbf{Fm / \Omega(\Gamma)} \), then we get that \( \left( \mathbf{ Fm / \Omega(\Gamma) },\pi \right) \vDash \gamma \approx 1 \) for each \( \gamma \in \Gamma \) but \( \left( \mathbf{ Fm / \Omega(\Gamma) },\pi \right) \not \vDash \varphi \approx 1 \). Thus we have that \( \left( \mathbf{Fm}/\Omega(\Delta),\pi \right) \vDash \gamma \approx 1 \) for each \( \gamma \in \Gamma \) but \( \left( \mathbf{Fm}/\Omega(\Delta),\pi \right) \not \vDash \varphi \approx 1 \). Therefore \( \tau[\Gamma] \not \vDash_{\mathcal{R}(I)} \tau(\varphi) \).

(2) From Lemma \ref{props riRigs} it follows that for every I-modal ririg \( \mathbf{A} \) and \( a,b \in \mathbf{A} \) it holds \( a = b \) if and only if \( a \to b = 1 \) and \( b \to a = 1 \).
\end{proof}

\subsection{Some deduction theorems}

In this section we prove that the logic $\mathcal{S}_{\mathcal{H}}$ enjoys of a meta-logical property called the local deduction-detachment theorem. To do so, we will make use of the results we obtained along Sections \ref{I-modal ririgs} and \ref{The finite case}. We recall that a logic $\mathbf{L}$ has the \textit{local deduction-detachment theorem} (or \emph{LDDT}) if there exists a family $\{d_j(p,q)\colon j\in J\}$ of sets $d_j(p,q)$ of formulas in at most two variables such that for every set $\Gamma\cup \{\varphi,\psi\}$ of formulas in the language of $\mathbf{L}$:
\begin{displaymath}
\begin{array}{ccc}
\Gamma, \varphi \vdash_{\mathbf{L}} \psi &  \Longleftrightarrow & \Gamma  \vdash_{\mathbf{L}} d_{j}(\varphi,\psi)\; \text{for some}\; j\in J.
\end{array}
\end{displaymath}

If $\mathcal{V}$ is a variety, we denote by $\mathbf{F_{\mathcal{V}}}(X)$ the $\mathcal{V}$-free algebra over $X$. Moreover, if $\varphi$ is a formula, we write $\bar{\varphi}$ for the image of $\varphi$ under the natural map ${\bf Fm}(X)\to \mathbf{F}_{\mathcal{V}}(X)$ from the term algebra $\mathbf{Fm}(X)$ over $X$ onto $\mathbf{F}_{\mathcal{V}}(X)$. If $\Gamma$ is a set of formulas, we also denote by $\bar{\Gamma}$ the set $\{\bar{\varphi} : \varphi\in\Gamma\}$. The following is a technical result which is essentially restatement of Lemma 2 of \cite{MMT2014}.

\begin{lemma}\label{lem: Technical lemma}
Let $\Theta\cup \{\varphi\eq\psi\}$ be a set of equations in the language of $\mathcal{V}$, and let $X$ be the set of variables occurring in $\Theta\cup\{\varphi\eq\psi\}$. Then the following are equivalent:
\begin{enumerate}
\item $\Theta \models_{\mathcal{V}} \varphi \eq \psi$.
\item $(\bar{\varphi}, \bar{\psi})\in \bigvee_{\epsilon\eq\delta\in\Theta} \mathsf{Cg}^{\mathbf{F}_{\mathcal{V}}(X)}(\bar{\epsilon},\bar{\delta})$.
\end{enumerate}
\end{lemma}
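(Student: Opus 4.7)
The plan is to prove both directions by exploiting the universal property of the free algebra $\mathbf{F}_{\V}(X)$: every assignment $v\colon X \to A$ with $\mathbf{A}\in\V$ extends uniquely to a homomorphism $\hat v\colon \mathbf{F}_{\V}(X) \to \mathbf{A}$, and equational satisfaction in $\mathbf{A}$ under $v$ translates into equalities of images under $\hat v$, i.e.\ membership in $\ker(\hat v)$. Setting $\Phi = \bigvee_{\epsilon\eq\delta\in\Theta} \mathsf{Cg}^{\mathbf{F}_{\V}(X)}(\bar\epsilon,\bar\delta)$, the whole argument reduces to moving between the congruences $\Phi$ and kernels of homomorphisms out of $\mathbf{F}_{\V}(X)$.

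For the implication $(2) \Rightarrow (1)$, I would fix any $\mathbf{A}\in\V$ and valuation $v\colon X\to A$ satisfying $\Theta$, extend to $\hat v$, and observe that each pair $(\bar\epsilon,\bar\delta)$ with $\epsilon\eq\delta\in\Theta$ lies in $\ker(\hat v)$. Since $\ker(\hat v)$ is a congruence, it contains each principal congruence $\mathsf{Cg}^{\mathbf{F}_{\V}(X)}(\bar\epsilon,\bar\delta)$, and therefore contains their join $\Phi$. By hypothesis $(\bar\varphi,\bar\psi)\in\Phi\subseteq\ker(\hat v)$, which gives $\hat v(\bar\varphi)=\hat v(\bar\psi)$, i.e.\ $\mathbf{A}\vDash\varphi\eq\psi$ under $v$. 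Since $(\mathbf{A},v)$ was arbitrary, $\Theta\vDash_{\V}\varphi\eq\psi$.

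For $(1) \Rightarrow (2)$, the idea is to build a canonical witness: take $\mathbf{B} = \mathbf{F}_{\V}(X)/\Phi$, which lies in $\V$ because $\V$ is closed under quotients, and let $\pi\colon\mathbf{F}_{\V}(X)\to\mathbf{B}$ be the projection. Define $v\colon X\to B$ by $v(x) = \pi(\bar x)$, so that the induced extension $\hat v\colon \mathbf{F}_{\V}(X) \to \mathbf{B}$ coincides with $\pi$. By construction each $(\bar\epsilon,\bar\delta)$ lies in $\Phi$, so $\pi(\bar\epsilon)=\pi(\bar\delta)$, hence $\mathbf{B}\vDash\Theta$ under $v$. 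Applying the assumption $\Theta\vDash_{\V}\varphi\eq\psi$ to $(\mathbf{B},v)$ yields $\pi(\bar\varphi)=\pi(\bar\psi)$, i.e.\ $(\bar\varphi,\bar\psi)\in\ker(\pi)=\Phi$, as desired.

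The only delicate point is ensuring that the identification of $v^*\colon\mathbf{Fm}(X)\to\mathbf{A}$ with a homomorphism from $\mathbf{F}_{\V}(X)$ is legitimate; this is just the universal property of the free algebra together with $\mathbf{A}\in\V$. Apart from that, both directions are direct applications of the congruence/kernel correspondence, and no computation specific to $\mathcal{R}(I)$ is required, which is why the result holds at the level of arbitrary varieties.
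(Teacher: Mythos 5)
Your proof is correct and is the standard free-algebra argument: the paper does not actually prove this lemma but defers to Lemma 2 of Metcalfe--Montagna--Tsinakis, and your two directions (kernels of extended valuations contain the join $\Phi$; the quotient $\mathbf{F}_{\mathcal{V}}(X)/\Phi$ as canonical witness) are exactly the argument underlying that citation. The only point worth flagging is that you (rightly) read $\Theta \vDash_{\mathcal{V}} \varphi \approx \psi$ as the usual local semantic consequence, quantifying over each $\mathbf{A}\in\mathcal{V}$ and each valuation satisfying $\Theta$, whereas the paper's preliminaries literally define $\vDash_{\mathcal{K}}$ via validity of $\Theta$ in the whole class --- under that literal reading the lemma would be false (the hypothesis would hold vacuously whenever $\mathcal{K}\not\vDash\Theta$), so your reading is the intended one and no change to your argument is needed.
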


\begin{theorem}\label{theo: R(I) satisfies parametrized LDDT}
Let $I$ be a set of unary connectives with $I\cap\mathcal{L}=\emptyset$, and suppose that $\mathbf{L}$ is an axiomatic extension of $S_{\mathcal{H}}$ that is algebraized by the subvariety $\mathcal{V}$ of $\mathcal{R}(I)$. Further, let $\Gamma\cup \Delta \cup \{\psi\} \subseteq Fm_{\mathcal{L}(I)}$. Then $\Gamma, \Delta \vdash_{\mathbf{L}} \psi$ if and only if for some $n\geq 0$ there exist I-blocks $M_{1},\ldots,M_{n}$ and $\psi_{1},\ldots,\psi_{n}\in \Delta$ such that $\Gamma \vdash_{\mathbf{L}} \prod_{j=1}^{n}M_{j}(\psi_{j})\rightarrow \psi$.
\end{theorem}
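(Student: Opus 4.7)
The plan is to translate both sides of the biconditional into statements about the free algebra $\mathbf{F}_{\mathcal{V}}(X)$ of $\mathcal{V}$ over the variables $X$ occurring in $\Gamma\cup\Delta\cup\{\psi\}$, and then move between these statements via the congruence/filter correspondence. For the forward direction, I would start from $\Gamma,\Delta \vdash_{\mathbf{L}} \psi$ and apply algebraizability of $\mathbf{L}$ (Theorem \ref{algebraizacion} applies verbatim to any axiomatic extension algebraized by a subvariety of $\mathcal{R}(I)$, with the same transformers $\tau$ and $\rho$) to rewrite this as the equational consequence $\{\sigma \approx 1 : \sigma \in \Gamma \cup \Delta\} \vDash_{\mathcal{V}} \psi \approx 1$. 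Lemma \ref{lem: Technical lemma} then reformulates it as $(\bar\psi,\bar 1)\in\theta$, where $\theta$ is the join of the principal congruences $\mathsf{Cg}^{\mathbf{F}_{\mathcal{V}}(X)}(\bar\sigma,\bar 1)$ for $\sigma \in \Gamma \cup \Delta$.

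Next, I would invoke the lattice isomorphism of Lemma \ref{Congruence I-Filters} together with Lemma \ref{cor:Generated filter intersection}(2) to replace that congruence condition by the I-filter condition
\[ \bar\psi \in \mathsf{Fg}^{\mathbf{F}_{\mathcal{V}}(X)}\bigl( \{\bar\gamma : \gamma \in \Gamma\} \cup \{\bar\delta : \delta \in \Delta\} \bigr). \]
Lemma \ref{generated filters R(I)} then yields a finite witness: I-blocks and elements $a_1,\ldots,a_N$ drawn from $\Gamma \cup \Delta$ whose block-evaluated product lies below $\bar\psi$. Partitioning these into a $\Gamma$-part $N_1(\bar\gamma_1),\ldots,N_k(\bar\gamma_k)$ and a $\Delta$-part $M_1(\bar\psi_1),\ldots,M_n(\bar\psi_n)$ and using residuation converts the inequality into
\[ \prod_{i=1}^{k} N_i(\bar\gamma_i) \;\leq\; \Bigl(\prod_{j=1}^{n} M_j(\bar\psi_j)\Bigr) \rightarrow \bar\psi. \]
The left-hand side belongs to $\mathsf{Fg}^{\mathbf{F}_{\mathcal{V}}(X)}(\{\bar\gamma : \gamma \in \Gamma\})$ by Lemma \ref{generated filters R(I)}, so the right-hand side does as well. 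Running the four-step correspondence in reverse delivers $\Gamma \vdash_{\mathbf{L}} \prod_{j=1}^{n} M_j(\psi_j) \to \psi$ (the degenerate case $n=0$, with the convention that the empty product equals $1$, reduces to $\Gamma \vdash_{\mathbf{L}} \psi$).

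The converse is purely syntactic. From $\psi_j \in \Delta$ one trivially has $\Delta \vdash_{\mathbf{L}} \psi_j$, and iterated applications of the rule ($m$Nec) produce $\Delta \vdash_{\mathbf{L}} M_j(\psi_j)$; this is legitimate inside a context because the Hilbert-style notion of deduction defined in the text permits rules to be applied to the accumulated set, which always contains the hypotheses. The derived ``adjunction'' rule $\varphi,\psi \vdash_{\mathbf{L}} \varphi\cdot\psi$, obtainable from axiom (1), axiom (5), and Modus Ponens, glues these together to yield $\Delta \vdash_{\mathbf{L}} \prod_{j=1}^n M_j(\psi_j)$. A final Modus Ponens against $\Gamma \vdash_{\mathbf{L}} \prod_{j=1}^n M_j(\psi_j) \to \psi$ closes the argument.

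The main obstacle I anticipate is the bookkeeping across the four levels (syntactic deducibility, equational consequence, principal-congruence membership, and I-filter membership), together with the need to partition the finite product produced by Lemma \ref{generated filters R(I)} in a way that preserves its I-block structure, so that residuation produces a formula of exactly the shape demanded by the statement. Once the correspondence chain is in place, every individual step is either a direct invocation of a previously proved lemma or a routine use of residuation.
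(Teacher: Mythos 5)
Your proposal is correct and follows essentially the same route as the paper: algebraizability, Lemma \ref{lem: Technical lemma}, the congruence--filter correspondence, the description of generated I-filters in Lemma \ref{generated filters R(I)}, the partition of the witnessing product into its $\Gamma$-part and $\Delta$-part, and residuation, then reversing the chain. The only (harmless) divergence is that you prove the converse syntactically via ($m$Nec) and a derived adjunction rule, where the paper simply declares it ``analogue'' to the semantic argument; your version is legitimate here because the Hilbert-style deductions of $S_{\mathcal{H}}$ do allow ($m$Nec) to be applied to hypotheses.
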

\begin{proof}
Let \( \Gamma \) and \( \Delta \) be sets of formulas and \( \varphi \) be a formula. If we assume that \( \Gamma \cup \Delta \vdash_{\mathcal{S}_{\mathcal{H}}} \varphi \) then there are \( \psi_{1},..,\psi_{n} \) finite subset of formulas of \( \Gamma \cup \Delta \) such that \( \{ \psi_{1}...,\psi_{n} \} \vdash_{\mathcal{S}_{\mathcal{H}}} \varphi \). Recall that from Theorem \ref{algebraizacion} we have that \( \{ \psi_{1}...,\psi_{n} \} \vdash_{\mathcal{S}_{\mathcal{H}}} \varphi \) if only if \( \{ \psi_{i} \approx 1 \colon 1 \leq i \leq n \} \vDash_{\mathcal{R}(I)} \varphi \approx 1 \). Therefore, due to Lemmas \ref{lem: Technical lemma} and \ref{cor:Generated filter intersection} (2), it is the case that \( \{ \psi_{i} \approx 1 \colon 1 \leq i \leq n \} \vDash_{\mathcal{R}(I)} \varphi \approx 1 \)  if and only if \( \left( \varphi,1 \right) \in \bigvee_{i=1}^{n} \mathsf{Cg}^{\mathbf{F}_{\mathrm{V}}(X)} \left( \psi_{i},1 \right) \) if and only if \( \varphi \in \mathsf{Fg}^{\mathbf{F}_{\mathrm{V}}(X)}\left( \{ \psi_{1},..,\psi_{n}  \} \right) \). Thus there are \( M_{1},..,M_{n}\in \mathcal{B}_{I} \) such that 
\[ \prod_{i=1}^{n} M_{i}\left( \psi_{i} \right) \leq \varphi. \]
Let \( C = \{ i \in \{1,...,n\} \colon \psi_{i} \in \Gamma \} \) and \( D = \{1,..,n\} \setminus C \), then from the commutativity of $\cdot$ we have that 
\[ \prod_{i \in C} M_{i}\left( \psi_{i} \right) \cdot \prod_{j \in D} M_{j}\left( \psi_{j} \right) \leq \varphi. \]
Hence, by the residuation law we have that 
\[ \prod_{i \in C} M_{i}\left( \psi_{i} \right) \leq \prod_{j \in D} M_{j}\left( \psi_{j} \right) \to \varphi. \]
Which, by Lemma \ref{generated filters R(I)}, means that \( \prod_{j \in D} M_{j}\left( \psi_{j} \right) \to \varphi \in  \mathsf{Fg}^{\mathbf{F}_{\mathrm{V}}(X)} \left( \{ \psi_{1},..,\psi_{n} \} \right) \). Finally, by Lemma \ref{lem: Technical lemma} and Theorem \ref{algebraizacion} we have that \( \Gamma \vdash_{\mathcal{S}_{\mathcal{H}}} \prod_{j \in D} M_{j}\left( \psi_{j} \right) \to \varphi \). The proof of the converse is analogue. This concludes the proof.
\end{proof}

The following theorem is the version of Theorem \ref{theo: R(I) satisfies parametrized LDDT} when considering I finite. Its proof uses the same arguments we applied on the proof the above Theorem with the difference that instead of Lemma \ref{generated filters R(I)} we employ Lemma \ref{generated filter finite case}. We leave the details to the reader.

\begin{theorem}\label{theo: R(If) satisfies parametrized LDDT}
Let $I$ be a finite set of unary connectives with $I\cap\mathcal{L}=\emptyset$, and suppose that $\mathbf{L}$ is an axiomatic extension of $S_{\mathcal{H}}$ that is algebraized by the subvariety $\mathcal{V}$ of $\mathcal{R}(I_{\omega})$. Further, let $\Gamma\cup \Delta \cup \{\psi\} \subseteq Fm_{\mathcal{L}(I)}$. Then $\Gamma, \Delta \vdash_{\mathbf{L}} \psi$ if and only if for some $n\geq 0$ there exist $l\geq 0$ and $\psi_{1},\ldots,\psi_{n}\in \Delta$ such that $\Gamma \vdash_{\mathbf{L}} \prod_{j=1}^{n}\lambda^{l}(\psi_{j})\rightarrow \psi$.
\end{theorem}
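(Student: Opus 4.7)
The plan is to mimic the strategy of Theorem \ref{theo: R(I) satisfies parametrized LDDT}, with the only substantive change being that the description of generated I-filters is replaced by the single-operator description of Lemma \ref{generated filter finite case}, which is available because $I$ is now finite.

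First, I would assume $\Gamma, \Delta \vdash_{\mathbf{L}} \psi$ and invoke finitarity to extract a finite subset $\{\psi_{1},\ldots,\psi_{n}\}\subseteq \Gamma\cup \Delta$ with $\{\psi_{1},\ldots,\psi_{n}\}\vdash_{\mathbf{L}}\psi$. By algebraization (Theorem \ref{algebraizacion}, applied to the subvariety $\mathcal{V}$ of $\mathcal{R}(I_{\omega})$ that algebraizes $\mathbf{L}$) this is equivalent to
\[ \{\psi_{i}\approx 1 \colon 1\leq i \leq n\} \vDash_{\mathcal{V}} \psi\approx 1. \]
Now Lemma \ref{lem: Technical lemma} translates this into
\[ (\psi,1)\in \bigvee_{i=1}^{n}\mathsf{Cg}^{\mathbf{F}_{\mathcal{V}}(X)}(\psi_{i},1), \]
where $X$ is the set of variables involved. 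By Lemma \ref{cor:Generated filter intersection}(2), this in turn is equivalent to $\psi\in \mathsf{Fg}^{\mathbf{F}_{\mathcal{V}}(X)}(\{\psi_{1},\ldots,\psi_{n}\})$.

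At this point comes the decisive difference with Theorem \ref{theo: R(I) satisfies parametrized LDDT}: since $I$ is finite, Lemma \ref{generated filter finite case} gives some $l\geq 0$ with $\prod_{i=1}^{n}\lambda^{l}(\psi_{i})\leq \psi$ in $\mathbf{F}_{\mathcal{V}}(X)$. Split $\{1,\ldots,n\}$ into $C=\{i\colon \psi_{i}\in \Gamma\}$ and $D=\{1,\ldots,n\}\setminus C$; then by commutativity of $\cdot$ and residuation
\[ \prod_{i\in C}\lambda^{l}(\psi_{i})\leq \prod_{j\in D}\lambda^{l}(\psi_{j})\to \psi. \]
Applying Lemma \ref{generated filter finite case} once more (in the easy direction), $\prod_{j\in D}\lambda^{l}(\psi_{j})\to \psi\in \mathsf{Fg}^{\mathbf{F}_{\mathcal{V}}(X)}(\{\psi_{i}\colon i\in C\})$; then Lemma \ref{lem: Technical lemma} and Theorem \ref{algebraizacion} yield $\Gamma\vdash_{\mathbf{L}}\prod_{j\in D}\lambda^{l}(\psi_{j})\to \psi$, which is exactly the required form (relabelling the $\psi_{j}$'s in $D$).

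For the converse, assume $\Gamma\vdash_{\mathbf{L}}\prod_{j=1}^{n}\lambda^{l}(\psi_{j})\to \psi$ with $\psi_{j}\in \Delta$. From $\Delta\vdash_{\mathbf{L}}\psi_{j}$ and the rule ($m$Nec) together with closure under products (derivable from axioms (3)--(6)) one obtains $\Delta\vdash_{\mathbf{L}}\prod_{j=1}^{n}\lambda^{l}(\psi_{j})$; then modus ponens with the hypothesis gives $\Gamma,\Delta\vdash_{\mathbf{L}}\psi$. I do not anticipate any real obstacle here, the argument being a faithful copy of the proof of Theorem \ref{theo: R(I) satisfies parametrized LDDT} with the monomial $M_{j}(\psi_{j})$ uniformly replaced by $\lambda^{l}(\psi_{j})$; the only thing to be careful about is that the single exponent $l$ is legitimate, which is exactly the content of Lemma \ref{generated filter finite case}.
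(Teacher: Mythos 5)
Your proposal is correct and follows exactly the route the paper intends: it reruns the proof of Theorem \ref{theo: R(I) satisfies parametrized LDDT} (finitarity, algebraization, Lemma \ref{lem: Technical lemma}, then the description of generated filters, the $C$/$D$ split and residuation), substituting Lemma \ref{generated filter finite case} for Lemma \ref{generated filters R(I)} to obtain the single uniform exponent $l$, which is precisely what the paper says the proof should be. The converse via the derivable product rule and (\(m\)Nec) is also the intended argument.
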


Observe that if we take $\Delta = \{\varphi\}$ and $d_{M} (p, q) = M(p) \rightarrow q$ for $M \in \mathcal{B}_{I}$, from Theorem \ref{theo: R(I) satisfies parametrized LDDT} we obtain:

\begin{corollary}\label{LDDT}
The logic $\mathcal{S}_{\mathcal{H}}$ has the LDDT.
\end{corollary}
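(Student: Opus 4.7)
The plan is to derive Corollary \ref{LDDT} as an immediate specialization of Theorem \ref{theo: R(I) satisfies parametrized LDDT}. First I would take the index set $J$ to be the collection of all finite sequences $\vec{M} = (M_{1},\ldots,M_{n})$ of I-blocks (with $n\geq 0$, the empty product being interpreted as the constant $1$); for each such $\vec{M}\in J$ I would set
\[
d_{\vec{M}}(p,q) \;=\; \Bigl\{\prod_{j=1}^{n} M_{j}(p) \to q\Bigr\},
\]
which is a set of formulas in at most two variables, as required by the definition of LDDT.

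To verify that the family $\{d_{\vec{M}} : \vec{M}\in J\}$ witnesses the LDDT for $\mathcal{S}_{\mathcal{H}}$, I would apply Theorem \ref{theo: R(I) satisfies parametrized LDDT} with $\mathbf{L} = \mathcal{S}_{\mathcal{H}}$, $\mathcal{V} = \mathcal{R}(I)$, and the parameter set chosen as the singleton $\Delta = \{\varphi\}$. Every $\psi_{i}\in\Delta$ appearing in the conclusion of that theorem must then coincide with $\varphi$, so the biconditional collapses to
\[
\Gamma,\varphi \vdash_{\mathcal{S}_{\mathcal{H}}} \psi \;\iff\; \exists\, n\geq 0,\ \exists\, M_{1},\ldots,M_{n}\in\mathcal{B}_{I} \text{ with } \Gamma \vdash_{\mathcal{S}_{\mathcal{H}}} \prod_{j=1}^{n} M_{j}(\varphi)\to \psi.
\]
The right-hand side is, by the very definition of $d_{\vec{M}}$, the assertion that $\Gamma \vdash_{\mathcal{S}_{\mathcal{H}}} d_{\vec{M}}(\varphi,\psi)$ for some $\vec{M}\in J$, which is exactly the statement of the LDDT.

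Since all the substantive content has already been packaged into Theorem \ref{theo: R(I) satisfies parametrized LDDT} (which itself rests on the algebraization result of Theorem \ref{algebraizacion} and the description of generated I-filters in Lemma \ref{generated filters R(I)}), there is essentially no technical obstacle in the corollary: its two directions follow directly from the corresponding directions of the theorem, using only the trivial observation that any finite family drawn from a singleton is a constant family. The one point that warrants care is the shape of the index set: although a single I-block $M$ is the natural atomic building block behind the formulas $d_{M}(p,q)=M(p)\to q$ mentioned in the remark preceding the corollary, the LDDT-family must in fact be indexed by \emph{finite tuples} of I-blocks in order to accommodate the products that Theorem \ref{theo: R(I) satisfies parametrized LDDT} outputs; choosing $J$ as above is precisely what promotes that informal observation to a literal instance of the LDDT definition.
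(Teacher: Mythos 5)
Your proposal is correct and follows essentially the same route as the paper: specialize Theorem \ref{theo: R(I) satisfies parametrized LDDT} to $\Delta=\{\varphi\}$ and read off the witnessing family. Your one point of divergence is actually an improvement in precision: the paper indexes the family by single $I$-blocks $M$ with $d_{M}(p,q)=M(p)\to q$, but the theorem delivers a formula of the form $\prod_{j=1}^{n}M_{j}(\varphi)\to\psi$, and since a product $M_{1}(x)\cdots M_{n}(x)$ need not lie above any single $Q(x)$ in an arbitrary I-modal ririg, indexing by finite tuples of blocks as you do is the formulation that literally instantiates the LDDT definition.
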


We stress that when $I$ is finite, by taking $\Delta = \{\varphi\}$ and considering $d_{l} (p, q) = \lambda^{l}(p) \rightarrow q$ for $l \in \mathbb{N}$ we are also able to conclude that the logic $\mathcal{S}_{\mathcal{H}}$ has the LDDT.
\\

This section concludes with a result concerning the variety $\mathcal{R}(I)$ which is a consequence from the LDDT of $\mathcal{S}_{\mathcal{H}}$ obtained in Theorem \ref{theo: R(I) satisfies parametrized LDDT}.

We recall that an algebra $\mathbf{B}$ has the \emph{congruence extension property} (or \emph{CEP}) if for every subalgebra $\mathbf{A}$ of $\mathbf{B}$ and for any $\theta \in \mathsf{Con}(\mathbf{A})$, there exists $\xi \in \mathsf{Con}(\mathbf{B})$ such that $\xi\cap A^{2} = \theta$. A variety $\mathcal{V}$ is said to have the congruence extension property if each $\mathbf{B}\in\mathcal{V}$ does. It is well known \cite[Corollary 5.3]{BP} that if $\mathbf{L}$ is an algebraizable logic with equivalent variety semantics $\mathcal{V}$, then $\mathbf{L}$ has the LDDT if and only if $\mathcal{V}$ has the CEP. Hence, from Theorem \ref{theo: R(I) satisfies parametrized LDDT} we may conclude:

\begin{corollary}\label{theo: S4tL satisfies LDDT}
The variety $\mathcal{R}(I)$ has the CEP.
\end{corollary}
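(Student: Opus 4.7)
The plan is to invoke the Blok--Pigozzi bridge theorem relating the local deduction-detachment theorem on the logical side with the congruence extension property on the algebraic side, so essentially all the work has already been done in the preceding results. Concretely, I would combine three ingredients already available in the excerpt: (i) Theorem \ref{algebraizacion}, which gives that $\mathcal{S}_{\mathcal{H}}$ is Blok--Pigozzi algebraizable with equivalent algebraic semantics $\mathcal{R}(I)$; (ii) Corollary \ref{LDDT}, which asserts that $\mathcal{S}_{\mathcal{H}}$ has the LDDT via the family $d_{M}(p,q) = M(p)\to q$ indexed by $M\in \mathcal{B}_{I}$; and (iii) the standard bridge theorem (Corollary 5.3 of \cite{BP}) stating that for an algebraizable deductive system with equivalent variety semantics $\mathcal{V}$, the LDDT is equivalent to $\mathcal{V}$ having the CEP.

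So the proof would be a single line: by Theorem \ref{algebraizacion} and Corollary \ref{LDDT}, $\mathcal{S}_{\mathcal{H}}$ is an algebraizable logic with equivalent algebraic semantics $\mathcal{R}(I)$ which satisfies the LDDT; applying the Blok--Pigozzi bridge theorem yields that $\mathcal{R}(I)$ has the CEP. No additional calculation is needed, since the heavy lifting was done in Theorem \ref{theo: R(I) satisfies parametrized LDDT}, where the filter-generation description of Lemma \ref{generated filters R(I)} was used to produce the family of deduction terms $\{M(p)\to q : M\in\mathcal{B}_{I}\}$.

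There is no real obstacle here, since the statement is presented explicitly as a consequence of the preceding paragraph. The only subtlety worth flagging is that the family of deduction terms is parametric (indexed by the generally infinite set $\mathcal{B}_{I}$), so strictly speaking one invokes the version of the Blok--Pigozzi equivalence for the \emph{local} (not finite) deduction-detachment theorem; this is exactly the version recalled in the paragraph preceding the corollary, so the application is immediate. If one wished to be fully self-contained, one could unfold the bridge theorem by hand: given $\mathbf{A}\le\mathbf{B}\in\mathcal{R}(I)$ and $\theta\in\mathsf{Con}(\mathbf{A})$, set $F=1/\theta\in\mathsf{Fi}(\mathbf{A})$, take $\xi$ to be the congruence on $\mathbf{B}$ associated via Lemma \ref{Congruence I-Filters} to the I-filter $\mathsf{Fg}^{\mathbf{B}}(F)$, and use Lemma \ref{generated filters R(I)} together with the absorbing behavior of $\cdot$ in up-sets to verify $\xi\cap A^{2}=\theta$; this direct argument effectively reproduces the LDDT-to-CEP half of Blok--Pigozzi in the present setting.
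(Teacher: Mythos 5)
Your proposal is correct and matches the paper's own argument exactly: the paper likewise deduces the corollary by combining Theorem \ref{algebraizacion}, the LDDT for $\mathcal{S}_{\mathcal{H}}$ established in Theorem \ref{theo: R(I) satisfies parametrized LDDT} (Corollary \ref{LDDT}), and the Blok--Pigozzi bridge theorem (Corollary 5.3 of \cite{BP}) equating the LDDT with the CEP for the equivalent variety semantics. Your additional remark about the family of deduction terms being parametric, and the optional hand-unfolding via I-filter generation, are sound but not needed.
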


\bibliographystyle{plain}

\end{document}